\newtheorem{lemma}{Lemma}[section]
\newtheorem{proposition}[lemma]{Proposition}
\newtheorem{corollary}[lemma]{Corollary}
\newtheorem{remark}[lemma]{Remark}
\newtheorem{theorem}[lemma]{Theorem}
\newtheorem{example}[lemma]{Example}
\begin{document}

\title[Invariant means]{Invariant means on Boolean inverse monoids}

\author{G. Kudryavtseva}
\address{Ganna Kudryavtseva,
Jo\v{z}ef \v{S}tefan Institute, Jamova cesta 39,
SI-1000, Ljubljana, SLOVENIA
and
Institute of Mathematics, Physics and Mechanics,
Jadranska ulica 19,
SI-1000,  Ljubljana, SLOVENIA}
\email{ganna.kudryavtseva@ijs.si}
\email{ganna.kudryavtseva@imfm.si}

\author{M. V. Lawson}
\address{Mark V. Lawson, 
Department of Mathematics
and the
Maxwell Institute for Mathematical Sciences, 
Heriot-Watt University,
Riccarton,
Edinburgh~EH14~4AS, UNITED KINGDOM}
\email{m.v.lawson@hw.ac.uk}

\author{D. H. Lenz}
\address{Daniel H. Lenz, Mathematisches Institut, 
Friedrich-Schiller Universit\"at Jena, 
Ernst-Abb\'{e} Platz~2, 
07743 Jena, GERMANY}
\email{daniel.lenz@uni-jena.de }

\author{P. Resende}
\address{Pedro Resende, 
Centre for Mathematical Analysis, Geometry, and Dynamical Systems,
Departamento de Matem\'atica,
Instituto Superior T\'ecnico,
Universidade de Lisboa, 
Av. Rovisco Pais,
1049-001 Lisboa, PORTUGAL}
\email{pmr@math.ist.utl.pt}

\thanks{This research was carried out in July 2014 under the auspices of the {\em Research in groups programme} of the International Centre for Mathematical Sciences (ICMS), Edinburgh.
We are grateful to the Scientific Director, Prof Keith Ball, and the Centre Manager, Ms Jane Walker, and all the staff at ICMS for their help and hospitality during our stay.
We would also like to thank Alistair Wallis for some fruitful discussions.
In addition, 
Kudryavtseva was also partially funded by the EU project TOPOSYS (FP7-ICT-318493-STREP) and by ARRS grant P1-0288;
Lawson was also partially supported by an EPSRC grant  (EP/I033203/1);
and 
Resende was also partially supported by FCT/Portugal through projects EXCL/MAT-GEO/0222/2012 and PEst-OE/EEI/LA0009/2013.
}

\begin{abstract} 
The classical theory of invariant means, 
which plays an important r\^ole in the theory of paradoxical decompositions, 
is based upon what are usually termed `pseudogroups'.
Such pseudogroups are in fact concrete examples of the Boolean inverse monoids which give rise to \'etale topological groupoids under non-commutative Stone duality.
We accordingly initiate the theory of invariant means on arbitrary Boolean inverse monoids.
Our main theorem is a characterization of when a Boolean inverse monoid admits an invariant mean.
This generalizes the classical Tarski alternative proved, for example, by de la Harpe and Skandalis, but using different methods.
\end{abstract}

\keywords{Inverse semigroups,  pseudogroups,  Banach-Tarski paradox,  the Tarski alternative}

\subjclass{20M18}

\maketitle

\section{Introduction}

It is the thesis of this paper that inverse semigroups have  an important  r\^{o}le to play in measure theory in general, 
and the study of amenability in particular\footnote{There is a nice essay, available as arXiv:1306.2985, that also recognises the
importance of inverse semigroups in this context.  As an aside, the second author observes that the co-attribution of \cite{Lawson1998} given in this essay is incorrect.}.
In fact, the evidence for this is widespread but disguised.
If we survey the papers on amenability, we discover that they are based on the following concept:  a set of partial bijections closed under partial inverses, composition and restriction.
For example, \cite{Sherman} deals with what it calls sets of {\em piecewise translations} of a group
whereas \cite{HS} deals with what they call {\em pseudogroups} as does \cite{C} and \cite{Ch}.
The reason this concept is so important  is the fact that in the very definition of paradoxical decompositions, one is obliged to work with partial bijections.
Thus Ulam \cite{Ulam} states that in defining the notion of congruence between two subsets $A$ and $B$ of Euclidean space
one needs a one-to-one transformation from $A$ to $B$ but that this transformation {\em need only be defined on $A$ and not necessarily on the whole space}.
Accordingly, Tarski \cite{Tarski} himself devotes a chapter to partial automorphisms.
But the key point is that it has been known since the 1950's \cite[Chapter 1]{Lawson1998}, that such sets of partial bijections may be abstractly characterized as {\em inverse semigroups}:
indeed, there is a Cayley-type theorem, due to Wagner and Preston, that says that every abstract inverse semigroup is isomorphic to an inverse semigroup of partial bijections.
Thus, terminology to one side, inverse semigroups are already being used in the study of amenability.
But a trivial change in name is not the issue.
We believe that there is something deeper going on.
Recent work on non-commutative Stone dualities \cite{KL,LL,Resende} 
has linked  classes of inverse semigroups and classes of \'etale topological groupoids and, as the word duality suggests, this is a two-way relationship.
The r\^ole of \'etale topological groupoids within mathematics as a whole is well-established, particularly within the theory of $C^{\ast}$-algebras \cite{Renault, Paterson}.
Thus measure-theoretic type results for inverse semigroups will be connected to such results for \'etale groupoids.
But it is clear from the literature that, in formulating results, the inverse semigroup approach is the most natural.
In the remainder of this section, we shall describe the precise class of inverse semigroups we shall study and define the measure-theoretic notion whose theory we shall develop.
We refer the reader to \cite{Wagon} for the necessary background in paradoxical decompositions.

Recall that a {\em semigroup} is just a set equipped with an associative binary operation and a {\em monoid} is a semigroup with identity.
There is no special term for a semigroup with zero but all our inverse semigroups will be assumed to have a zero.
An invertible element in a monoid is called a {\em unit}.
The group of units of the monoid $S$ is denoted by $\mathsf{U}(S)$.
A semigroup $S$ is said to be {\em inverse} if for each $a \in S$ there exists a unique element $a^{-1}$ such that $a = aa^{-1}a$ and $a^{-1} = a^{-1}aa^{-1}$.
Standard properties of inverse semigroups are described in \cite{Lawson1998}
but we highlight here the key ones.
The set of idempotents of $S$, denoted by $E(S)$, forms a commutative idempotent subsemigroup.
The elements $a^{-1}a$ and $aa^{-1}$ are both idempotents.
A partial order $\leq$, called the {\em natural partial order}, is defined by $a  \leq b$ if, and only if, $a = be$ for some idempotent $e$.
With respect to this partial order $S$ is a partially ordered semigroup in which $a \leq b$ implies that $a^{-1} \leq b^{-1}$.
An inverse monoid in which each element is beneath a unit is said to be {\em factorizable}.
In an inverse semigroup with zero we have $0 \leq a$ for all elements $a$.
If there is no element properly between $0$ and $a$ we say that $a$ is an {\em atom}.
The set of idempotents forms a meet-semilattice with respect to this order and is therefore usually referred to as the {\em semilattice of idempotents}.
It is also an order ideal.
If $a,b \leq c$ then both $a^{-1}b$ and $ab^{-1}$ are idempotents.
Accordingly, if $a,b \in S$ define the {\em compatibility relation} $a \sim b$ if $a^{-1}b$ and $ab^{-1}$ are idempotents.
Thus being compatible is a necessary condition for two elements of an inverse semigroup to have a join.
If $S$ is an inverse semigroup with zero and both $a^{-1}b$ and $ab^{-1}$ are zero then we say that $a$ and $b$ are {\em orthogonal} and write $a \perp b$.
The basic examples of inverse semigroups are the {\em symmetric inverse monoids} $I(X)$ consisting of all partial bijections of the set $X$.
If $X$ is finite with $n$ elements we usually write just $I_{n}$.
Symmetric inverse monoids are factorizable precisely when they are finite.
The idempotents in $I(X)$ are the partial identities $1_{A}$ defined on subsets $A \subseteq X$;
the natural partial order is the restriction order of partial bijections;
the semilattice of idempotents is isomorphic to the Boolean algebra of all subsets of $X$.
We may now state precisely the Cayley-type theorem mentioned above by Wagner and Preston: 
every inverse semigroup is isomorphic to an inverse subsemigroup of some symmetric inverse monoid.
This result leads to a fruitful way of treating the elements of an abstract inverse semigroup.
Let $a \in S$.
We write $\mathbf{d}(a) = a^{-1}a$ and $\mathbf{r}(a) = aa^{-1}$ for the {\em domain} and {\em range} of $a$, respectively.
We also write
$$a^{-1}a \stackrel{a}{\longrightarrow} aa^{-1}.$$
An arrow defined in this way between two idempotents is equivalent to saying that the two idempotents are $\mathscr{D}$-related
in the usual sense of Green's relations.
Let $e$ and $f$ be idempotents.
Define $e \leq_{J} f$ if, and only if, $e \, \mathscr{D} \, e' \leq f$ for some idempotent $e'$.
If $e \leq_{J} f$ and $f \leq_{J} e$ then we have that $e \mathscr{J} f$ another of the familiar Green's relations.
Although $\mathscr{D} \subseteq \mathscr{J}$, we do not have equality in general.
In fact,  the equality $\mathscr{D} = \mathscr{J}$ can be viewed as an expression of the Schr\"oder-Bernstein theorem interpreted within inverse semigroup theory.
To see why, we work with partial bijections.
Then  $1_{A} \leq_{J} 1_{B}$ means that that there is an injection from $A$ to $B$.
Thus $1_{A} \, \mathscr{J} \, 1_{B}$ means that there are injections from $A$ to $B$ and from $B$ to $A$,
whereas $1_{A} \, \mathscr{D}\, 1_{B}$ means that there is a bijection from $A$ to $B$.

The distinction between monoids and semigroups is paralleled in what we term {\em unital} Boolean algebras as opposed to simply Boolean algebras, 
these usually being termed {\em generalized} Boolean algebras.
We say that an inverse semigroup with zero is {\em distributive} if it has all binary joins of compatible pairs of elements,
and multiplication distributes over such joins.
A distributive inverse semigroup is {\em Boolean} if its semilattice of idempotents is a Boolean algebra.
We are actually most interested in Boolean inverse {\em monoids} but our proofs will sometimes require Boolean inverse {\em semigroups}.
Henceforth, we shall usually just say {\em Boolean monoid} and {\em Boolean semigroup}.
{\em Morphisms} of Boolean monoids will be monoid homomorphisms that map zeros to zeros and preserve any binary joins that exist.
Such morphisms, restricted to the semilattices of idempotents, will therefore be morphisms of unital Boolean algebras.
The complement of an element $e$ in a unital Boolean algebra is denoted by $\bar{e}$.
The symmetric inverse monoids are examples of Boolean monoids.
If $S$ is a semigroup and $e$ is an idempotent then $eSe$ is a subsemigroup that is a monoid with respect to the identity $e$.
We call such subsemigroups {\em local monoids} of $S$.
If $S$ is a Boolean semigroup then for any idempotent $e \in S$ the local monoid $eSe$ is a Boolean monoid.
Classical Stone duality may be generalized to yield a duality between Boolean  monoids and a class of \'etale topological groupoids called {\em Boolean groupoids} \cite{LL,Resende,KL}.

The definitions above are all standard and well-known.
We now come to the key new definition of this paper which was suggested by \cite{C} and classical pseudogroup theory \cite{Plante}.
Let $S$ be a Boolean semigroup.
We shall say that $S$ has an {\em invariant mean} if there is a function $\mu \colon E(S) \rightarrow [0,\infty)$ satisfying the following conditions.
\begin{description}

\item[{\rm (IM1)}] For any $s \in S$, we have that $\mu (s^{-1}s) = \mu (ss^{-1})$.

\item[{\rm (IM2)}] If $e$ and $f$ are orthogonal idempotents we have that $\mu (e \vee f) = \mu (e) + \mu (f)$.

\end{description}
Of course, the constant function to zero is an invariant mean according to this definition.
We therefore explicitly exclude this possibility.
If $S$ is a Boolean semigroup and $e \in S$ is a distinguished idempotent, we say that an invariant mean $\mu$ is {\em normalized at $e$} if $\mu (e) = 1$.\\

\noindent
{\bf Convention. }If $S$ is a Boolean inverse {\em monoid} then we shall require our invariant means to be normalized at the identity unless stated otherwise.\\

The fundamental question that interests us in this paper is the existence or non-existence of invariant means on Boolean inverse {\em monoids}.
Our main theorem is Theorem~\ref{thm:TAS} where we describe necessary and sufficient conditions on a Boolean inverse monoid in order that it possess an invariant mean.
Our conditions generalize the classical {\em Tarski alternative} which is proved as a corollary to our main theorem in Theorem~\ref{thm:TA}.
We also describe some examples of Boolean inverse monoids that possess invariant means, 
and some natural conditions derived from the classical theory of paradoxical decompositions that ensure a Boolean inverse monoid have no invariant mean.

\section{Preliminary results and examples}

Our goal in this section is to demonstrate that the question of the existence or non-existence of an invariant mean on a Boolean monoid is interesting.

\subsection{Basics}

We begin with a few five-finger exercises.

\begin{lemma}\label{lem:thursday} Let $\mu$ be an invariant mean on the Boolean inverse semigroup $S$.
\begin{enumerate}

\item $\mu (0) = 0$.

\item  $\mu (e \vee f) = \mu (e) + \mu (f) - \mu (ef)$.

\item $e \leq f$ implies that $\mu (e) \leq \mu (f)$.

\item The set $I$ of all idempotents $e$ such that $\mu (e) = 0$ forms an ideal in $E(S)$
such that $s \in S$ and $e \in I$ imply that $ses^{-1} \in I$.

\item If $S$ is also a monoid and $\mu (1) = 1$ then $\mu(\bar{e}) = 1 - \mu(e)$.

\end{enumerate}
\end{lemma}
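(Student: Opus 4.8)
The plan is to prove the five assertions in order, using the defining properties (IM1) and (IM2) together with standard facts about the natural partial order and the Boolean structure of $E(S)$.

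For (1), the idempotent $0$ is orthogonal to itself, since $0^{-1} \cdot 0 = 0$, so (IM2) applied to $e = f = 0$ gives $\mu(0) = \mu(0 \vee 0) = \mu(0) + \mu(0) = 2\mu(0)$, forcing $\mu(0) = 0$. For (2), the key observation is that in a Boolean algebra one can decompose $e \vee f$ orthogonally: write $e \vee f = e \vee (f \wedge \bar e)$, where $e$ and $f \wedge \bar e = f\bar e$ are orthogonal idempotents (their product is $0$ since $e \cdot f\bar e = fe\bar e = 0$). Then (IM2) gives $\mu(e \vee f) = \mu(e) + \mu(f\bar e)$. Similarly $f = (f\wedge e) \vee (f \wedge \bar e) = ef \vee f\bar e$ is an orthogonal decomposition, so $\mu(f) = \mu(ef) + \mu(f\bar e)$, i.e. $\mu(f\bar e) = \mu(f) - \mu(ef)$. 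Substituting yields the claimed identity. For (3), if $e \leq f$ then $f = e \vee (f\bar e)$ is an orthogonal join, so $\mu(f) = \mu(e) + \mu(f\bar e) \geq \mu(e)$ because $\mu$ takes values in $[0,\infty)$.

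For (4): closure under the Boolean operations. If $e, f \in I$, then by (2), $\mu(e \vee f) = \mu(e) + \mu(f) - \mu(ef) = 0 - \mu(ef) \le 0$, and since $\mu \ge 0$ and $\mu(ef) \ge 0$ we get $\mu(e\vee f) = 0$, so $e \vee f \in I$; and if $e \in I$ and $g \le e$ then (3) gives $0 \le \mu(g) \le \mu(e) = 0$, so $g \in I$ — hence $I$ is an order ideal of $E(S)$ closed under binary joins, i.e. an ideal. For the conjugation statement, note that $ses^{-1}$ is an idempotent (it equals $(se)(se)^{-1}$ when $e$ is an idempotent below... more directly, $ses^{-1} \cdot ses^{-1} = s e s^{-1} s e s^{-1}$; since $e \le \mathbf{d}(s)$ may fail, first replace $e$ by $e\wedge \mathbf{d}(s) \le e$, still in $I$). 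Put $t = se$; then $\mathbf{d}(t) = t^{-1}t = e s^{-1} s e \le e$, so $\mu(\mathbf{d}(t)) = 0$ by (3), and by (IM1) $\mu(\mathbf{r}(t)) = \mu(tt^{-1}) = \mu(ses^{-1}s e s^{-1})$... I should instead observe $\mathbf{r}(se) = ses^{-1}$ precisely when $e \le \mathbf{d}(s)$; in general $ses^{-1} = s(e\wedge \mathbf{d}(s))s^{-1} = \mathbf{r}(s(e\wedge\mathbf{d}(s)))$, and since $e \wedge \mathbf{d}(s) \le e \in I$, part (3) gives $\mu(\mathbf{d}(s(e\wedge\mathbf{d}(s)))) = 0$, whence (IM1) gives $\mu(ses^{-1}) = 0$. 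This is the step I expect to need the most care, since one must be attentive to the fact that $e$ need not lie below $\mathbf{d}(s)$ and to checking $ses^{-1}$ really is the range idempotent of a restriction of $s$.

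For (5), when $S$ is a monoid with $\mu(1) = 1$: since $e$ and $\bar e$ are orthogonal with $e \vee \bar e = 1$, (IM2) gives $1 = \mu(1) = \mu(e) + \mu(\bar e)$, hence $\mu(\bar e) = 1 - \mu(e)$. This also shows, incidentally, that $\mu(e) \le 1$ for every idempotent of a Boolean inverse monoid normalized at the identity, since $\mu(\bar e) \ge 0$.
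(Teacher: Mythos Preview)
Your proof is correct and follows essentially the same approach as the paper's, with only cosmetic differences (you use a two-term orthogonal split $e\vee f = e \vee f\bar e$ in (2) where the paper uses the symmetric three-term split $e\bar f \vee f\bar e \vee ef$). For the conjugation step in (4), the paper avoids your detour by observing directly that the element $se$ has $\mathbf{d}(se) = es^{-1}s \leq e$ and $\mathbf{r}(se) = ses^{-1}$, so (IM1) and monotonicity immediately give $\mu(ses^{-1}) = \mu(es^{-1}s) \leq \mu(e) = 0$ without any need to first restrict $e$ below $\mathbf{d}(s)$.
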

\begin{proof} (1) Since $0$ is orthogonal to itself $\mu (0) = 0$.

(2) Observe that $e \vee f = e \bar{f} \vee f \bar{e} \vee ef$, an orthogonal join.
Thus $\mu (e \vee f) = \mu (e \bar{f}) + \mu (f\bar{e}) + \mu (ef)$.
But 
$e \bar{f} \vee ef = e$ and so $\mu (e) = \mu (e\bar{f}) + \mu (ef)$,
and
$f \bar{e} \vee ef = f$ and so $\mu (f) = \mu (f \bar{e}) + \mu (ef)$.
The result now follows. 

(3) If $e \leq f$ then $f = e \vee f\bar{e}$ is an orthogonal join.
The result follows.

(4) Clearly, $0 \in I$.
It is immediate that $e,f \in I$  implies that $e \vee f \in I$.
Let $e \in I$ and $f \in E(S)$.
Then $ef \leq e$ and so $\mu (ef) \leq \mu (e) = 0$.
It follows that $ef \in I$.
To prove that $I$ is self-conjugate let $s \in S$ and $e \in I$.
Then $ses^{-1}$ is an idempotent.
Observe that $es^{-1}s \stackrel{se}{\longrightarrow} ses^{-1}$.
Thus $\mu (ses^{-1}) = \mu (es^{-1}s) \leq \mu (e) = 0$.
It follows that $\mu (ses^{-1}) = 0$.

(5) The join $1 = e \vee \bar{e}$ is orthogonal and so the result follows.
\end{proof}

The proof of the following is straightforward.

\begin{lemma}\label{lem:tuesday} Let $S$ be a Boolean semigroup with invariant mean $\nu$.
Let $e$ be any idempotent in $S$ such that $\nu (e) = r \neq 0$.
Define $\mu \colon eSe \rightarrow [0,1]$ by $\mu (a) = \frac{\nu (a)}{r}$.
Then $\mu$ is an invariant mean on the local monoid $eSe$ normalized at $e$.
\end{lemma}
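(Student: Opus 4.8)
The plan is to verify the two mean axioms (IM1) and (IM2) directly for $\mu$, together with the normalization condition, using the fact that $\nu$ already satisfies them on $S$. The only subtlety is bookkeeping: one must check that the relevant operations computed inside the local monoid $eSe$ agree with those computed in the ambient Boolean semigroup $S$, so that $\nu$ may be applied.

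First I would record the preliminary observation that $eSe$ is a Boolean inverse monoid with identity $e$ (this is stated in the excerpt) and that the inverse, the natural partial order, the orthogonality relation, and binary joins of compatible pairs in $eSe$ coincide with those in $S$ when restricted to elements below $e$; in particular, if $f, g \in E(eSe)$ are orthogonal in $eSe$ then $fg = 0$ in $S$ and $f \vee g$ (computed in $S$) lies in $eSe$ and serves as their join there. For (IM1): given $a \in eSe$, its inverse $a^{-1}$ is the same whether computed in $eSe$ or in $S$, so $\mathbf{d}(a) = a^{-1}a$ and $\mathbf{r}(a) = aa^{-1}$ are unchanged, and therefore $\mu(a^{-1}a) = \nu(a^{-1}a)/r = \nu(aa^{-1})/r = \mu(aa^{-1})$, the middle equality being (IM1) for $\nu$. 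For (IM2): given orthogonal $f,g \in E(eSe)$, we have $\mu(f \vee g) = \nu(f \vee g)/r = (\nu(f) + \nu(g))/r = \mu(f) + \mu(g)$, using (IM2) for $\nu$. Finally $\mu(e) = \nu(e)/r = r/r = 1$, so $\mu$ is normalized at $e$.

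It remains to confirm that $\mu$ takes values in $[0,1]$ as asserted. Since $\nu$ takes values in $[0,\infty)$ and $r > 0$, clearly $\mu(a) \geq 0$ for all $a$; and for any $f \in E(eSe)$ we have $f \leq e$, so by Lemma~\ref{lem:thursday}(3) applied to $\nu$ we get $\nu(f) \leq \nu(e) = r$, whence $\mu(f) = \nu(f)/r \leq 1$. This uses only monotonicity, already established.

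The main obstacle, such as it is, is purely one of being careful about where each operation is computed: one must not accidentally apply $\nu$ to the join as formed in $eSe$ without first noting that this join coincides with the join in $S$. Once the coincidence of structure between $eSe$ and the order ideal of $S$ below $e$ is made explicit, the verification of (IM1), (IM2), the normalization, and the range condition are each a one-line computation, which is why the excerpt describes the proof as straightforward.
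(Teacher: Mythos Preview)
Your proof is correct and is precisely the direct verification that the paper has in mind when it declares the result ``straightforward'' without further comment. You have even been more careful than necessary, checking the coincidence of joins and the range condition via Lemma~\ref{lem:thursday}(3); nothing needs to be changed.
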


Invariant means with the additional property that $\mu (e) = 0$ implies $e = 0$ are said to be {\em faithful}.

\begin{lemma}\label{lem:faithful} 
A Boolean inverse monoid equipped with a faithful invariant mean satisfies $\mathscr{D} = \mathscr{J}$.
\end{lemma}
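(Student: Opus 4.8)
The plan is to use the faithful invariant mean as a finitely additive, strictly positive ``measure'' on $E(S)$ and thereby reduce this Schr\"oder--Bernstein-type claim to a one-line computation. Since $\mathscr{D} \subseteq \mathscr{J}$ holds in every inverse semigroup, it suffices to prove the reverse inclusion $\mathscr{J} \subseteq \mathscr{D}$. So I would start from idempotents $e, f$ with $e \, \mathscr{J} \, f$. Unwinding the definition of $\leq_{J}$, there are an idempotent $e'$ and an element $s$ with $\mathbf{d}(s) = e$, $\mathbf{r}(s) = e'$ and $e' \leq f$, and symmetrically an idempotent $f'$ and an element $t$ with $\mathbf{d}(t) = f$, $\mathbf{r}(t) = f'$ and $f' \leq e$.

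Next I would extract the relevant numerical data. By (IM1) applied to $s$ and to $t$ we get $\mu(e) = \mu(e')$ and $\mu(f) = \mu(f')$. Since $e' \leq f$ in the Boolean algebra $E(S)$, we have the orthogonal decomposition $f = e' \vee f\bar{e'}$, so (IM2) gives $\mu(f) = \mu(e') + \mu(f\bar{e'}) = \mu(e) + \mu(f\bar{e'})$; in particular $\mu(f) \geq \mu(e)$. The same argument with $t, f'$ in place of $s, e'$ yields $\mu(e) \geq \mu(f)$, hence $\mu(e) = \mu(f)$. Substituting back into the displayed identity forces $\mu(f\bar{e'}) = 0$, using only that $\mu$ takes nonnegative values.

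Faithfulness now closes the argument: $\mu(f\bar{e'}) = 0$ gives $f\bar{e'} = 0$, i.e.\ $f \leq e'$, and together with $e' \leq f$ this yields $e' = f$. Since $s$ witnesses $e \, \mathscr{D} \, e'$, we conclude $e \, \mathscr{D} \, f$, as required. I do not expect a genuine obstacle here; the only points requiring a moment's care are checking that $f = e' \vee f\bar{e'}$ really is an \emph{orthogonal} join so that (IM2) applies (which is immediate from commutativity of $E(S)$ and $e' \leq f$), and that the step ``a finite sum of nonnegative reals is $0$, hence each summand is $0$'' is legitimate — both follow at once from the standing hypotheses on $\mu$.
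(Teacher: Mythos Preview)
Your proof is correct and follows essentially the same route as the paper's own argument: both start from $e\,\mathscr{J}\,f$, pass to $\mathscr{D}$-related idempotents $e'\leq f$ and $f'\leq e$, use (IM1) and the orthogonal decomposition $f=e'\vee f\bar{e'}$ together with (IM2) to force $\mu(f\bar{e'})=0$, and then invoke faithfulness to conclude $e'=f$. The only cosmetic difference is that the paper carries out the symmetric computation on both sides (obtaining $i=f$ and $j=e$), whereas you observe that one side already suffices.
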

\begin{proof} Let $\mu$ be the invariant mean.
Suppose that $e \, \mathscr{J} \, f$.
Then
$e \, \mathscr{D} \, i \leq f$ and $f \, \mathscr{D} \, j \leq e$.
Clearly $\mu (e) = \mu (i)$ and $\mu (f) = \mu (j)$.
But $\mu (i) \leq \mu (f)$ and $\mu (j) \leq \mu (e)$.
It follows that $\mu (e) = \mu (f)$.
We may write $f = i \vee f \bar{i}$, an orthogonal join, 
and $e = j \vee e \bar{j}$, an orthogonal join.
Thus $\mu (f) = \mu (i) + \mu (f \bar{i})$ and $\mu (e) = \mu (j) + \mu (e \bar{j})$.
By our calculations above, we have that 
$\mu (f \bar{i}) = 0$ and $\mu (e \bar{j}) = 0$.
We now use our assumption that the invariant mean is faithful to deduce $f \bar{i} = 0$ and $e \bar{j} = 0$.
It follows that $i = f$ and $j = e$ and so, in particular, $e \, \mathscr{D} \, f$.
\end{proof}

\subsection{Paradoxicality}

The problem of showing that a Boolean monoid does not have an invariant mean is intimately connected with the classical paradoxical decompositions.
The key definition is the following where we have adopted the terminology from \cite{HS}.
A Boolean inverse monoid is said to be {\em weakly paradoxical} if there exists a pair of elements $a$ and $b$ such that 
$\mathbf{d}(a) = 1 = \mathbf{d} (b)$ and $\mathbf{r}(a) \perp \mathbf{r}(b)$.

\begin{remark} 
{\em A Boolean inverse monoid is weakly paradoxical precisely when there is a  monoid embedding of the polycyclic monoid $P_{2}$ into $S$.
For more on the polycyclic inverse monoids see \cite{Lawson1998}.}
\end{remark}

The following is an abstract formulation of a classical result.
See \cite{HS}, for example.

\begin{lemma}\label{lem:sam} 
A weakly paradoxical Boolean monoid cannot have an invariant mean.
\end{lemma}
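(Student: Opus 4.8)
The plan is to derive a contradiction from the assumption that a weakly paradoxical Boolean monoid $S$ carries an invariant mean $\mu$, which by our standing convention is normalized at the identity, so $\mu(1) = 1$. Since $S$ is weakly paradoxical, fix $a, b \in S$ with $\mathbf{d}(a) = \mathbf{d}(b) = 1$ and $\mathbf{r}(a) \perp \mathbf{r}(b)$. The idea is simply that $a$ witnesses $1 = \mathbf{d}(a) \stackrel{a}{\longrightarrow} \mathbf{r}(a) = aa^{-1}$, so by (IM1) we get $\mu(aa^{-1}) = \mu(a^{-1}a) = \mu(1) = 1$, and likewise $\mu(bb^{-1}) = 1$.

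Next I would exploit orthogonality. Since $\mathbf{r}(a) = aa^{-1}$ and $\mathbf{r}(b) = bb^{-1}$ are orthogonal idempotents, (IM2) yields
\[
\mu(aa^{-1} \vee bb^{-1}) = \mu(aa^{-1}) + \mu(bb^{-1}) = 1 + 1 = 2.
\]
On the other hand, $aa^{-1} \vee bb^{-1} \leq 1$, so by Lemma~\ref{lem:thursday}(3) we have $\mu(aa^{-1} \vee bb^{-1}) \leq \mu(1) = 1$. This gives $2 \leq 1$, a contradiction, so no such $\mu$ can exist.

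There is essentially no obstacle here: the argument is a direct unwinding of the definitions, using only (IM1), (IM2), the normalization convention, and monotonicity from Lemma~\ref{lem:thursday}(3). The one point worth stating carefully is that the convention forces $\mu(1) = 1$ — without some normalization the statement would be false, since the zero function is excluded only by fiat but a weakly paradoxical monoid could in principle carry a nonzero mean with $\mu(1) = 0$ if we did not insist on normalization. So I would make the role of the convention explicit in the write-up.
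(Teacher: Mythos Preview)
Your proof is correct and essentially identical to the paper's own argument: both use (IM1) to get $\mu(\mathbf{r}(a)) = \mu(\mathbf{r}(b)) = 1$, then (IM2) to get $\mu(\mathbf{r}(a)\vee\mathbf{r}(b)) = 2$, and finally monotonicity (Lemma~\ref{lem:thursday}(3)) to obtain the contradiction $2 \leq 1$. One small correction to your closing remark: a \emph{nonzero} invariant mean can never satisfy $\mu(1)=0$, since $\mu(e)>0$ for some $e$ together with $e\leq 1$ and monotonicity forces $\mu(1)>0$; so the normalization convention is a convenience here rather than a genuine extra hypothesis.
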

\begin{proof} Let  $a$ and $b$ be elements of $S$ such that 
$\mathbf{d}(a) = 1 = \mathbf{d} (b)$ and $\mathbf{r}(a) \perp \mathbf{r}(b)$.
Let $\mu$ be an invariant mean on $S$.
Then $\mu (\mathbf{r}(a)) = 1 = \mu (\mathbf{r}(b))$ and $\mu (\mathbf{r}(a) \vee \mathbf{r}(b)) = 2$.
But by Lemma~\ref{lem:thursday}      
$\mu (\mathbf{r}(a) \vee \mathbf{r}(b)) \leq 1$.
Thus the existence of an invariant mean in this case would lead to the conclusion that $2 \leq 1$.
\end{proof}

\begin{example}{\em The symmetric inverse monoid $I(\mathbb{N})$ has no invariant mean.
Denote by $\mathbb{E}$ and $\mathbb{O}$ the set of even and odd numbers, respectively.
The monoid  $I(\mathbb{N})$ contains the partial bijections $f \colon \mathbb{N} \rightarrow \mathbb{E}$ given by $n \mapsto 2n$
and $g \colon \mathbb{N} \rightarrow \mathbb{O}$ given by $n \mapsto 2n + 1$.
We now apply Lemma~\ref{lem:sam}.}
\end{example}

We shall now develop some ideas that will enable us to reformulate the definition of weakly paradoxical.
In a Boolean monoid $S$ an ideal $I$ is said to be a $\vee$-ideal if it is closed under binary compatible joins.
If the only $\vee$-ideals are the two trivial ones we say that $S$ is {\em $0$-simplifying}.
Let $e$ and $f$ be two non-zero idempotents in $S$.
Define $e \preceq f$ if and only if there exists a set of  elements $X = \{x_{1}, \ldots, x_{m}\}$ such that
$e = \bigvee_{i=1}^{m}  \mathbf{d}(x_{i})$
and 
$\mathbf{r}(x_{i}) \leq f$ for $1 \leq i \leq m$. 
We say that $X$ is a {\em pencil} from $e$ to $f$.
Clearly $e \preceq 1$ for every idempotent $e$.
The proof of the following is straightforward.

\begin{lemma}\label{lem:dingo} Suppose that  $e \preceq f$ in a Boolean monoid.
Then we may find a pencil $X$ from $e$ to $f$ where the domains of the elements of $X$ are pairwise orthogonal.
\end{lemma}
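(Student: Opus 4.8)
The plan is a routine disjointification of the domains. Start from any pencil $X = \{x_1,\dots,x_m\}$ witnessing $e \preceq f$, and put $d_i = \mathbf{d}(x_i)$, so that $e = d_1 \vee \dots \vee d_m$ and $\mathbf{r}(x_i) \leq f$ for each $i$. Working in the Boolean algebra $E(S)$, I would set $e_1 = d_1$ and $e_i = d_i\,\overline{d_1 \vee \dots \vee d_{i-1}}$ for $i \geq 2$. A short induction using distributivity and complementation in $E(S)$ shows $e_1 \vee \dots \vee e_k = d_1 \vee \dots \vee d_k$ for all $k$, whence $\bigvee_{i=1}^{m} e_i = e$; and for $i < j$ one has $e_j \leq \overline{d_i}$, so $e_i e_j = 0$, i.e.\ the idempotents $e_i$ are pairwise orthogonal. (All the joins in sight exist: idempotents are always mutually compatible, and finite joins of compatible idempotents exist in a Boolean monoid.)

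Next I would restrict each generator to the corresponding piece by setting $y_i = x_i e_i$. Since $e_i \leq d_i = \mathbf{d}(x_i)$, standard inverse-semigroup manipulation gives $\mathbf{d}(y_i) = e_i x_i^{-1} x_i e_i = e_i \mathbf{d}(x_i) e_i = e_i$ and $\mathbf{r}(y_i) = x_i e_i x_i^{-1} \leq x_i \mathbf{d}(x_i) x_i^{-1} = \mathbf{r}(x_i) \leq f$, the last inequality also using that $E(S)$ is an order ideal. Discarding the indices $i$ with $e_i = 0$ (harmless, and since $e \neq 0$ not all $e_i$ vanish), one obtains $Y = \{\,y_i : e_i \neq 0\,\}$, which is a pencil from $\bigvee_i \mathbf{d}(y_i) = \bigvee_i e_i = e$ to $f$ whose domains $e_i$ are pairwise orthogonal, as required.

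I do not expect any real obstacle here. The two points that deserve a little care in the write-up are the Boolean-algebra identity $\bigvee_{i \leq k} e_i = \bigvee_{i \leq k} d_i$ — the one place where distributivity of $E(S)$ is genuinely used — and the elementary computations of $\mathbf{d}(y_i)$ and $\mathbf{r}(y_i)$. The only conceptual subtlety worth flagging is that the $x_i$ themselves need not be pairwise compatible; only their domains $d_i$ are required to join up to $e$. Hence the disjointification must be carried out at the level of the idempotents $d_i$ and then transported back through the restrictions $x_i e_i$, rather than applied directly to the $x_i$.
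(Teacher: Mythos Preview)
Your proposal is correct and is precisely the standard disjointification argument the paper has in mind when it records only that ``the proof of the following is straightforward.'' The Boolean-algebra identity $\bigvee_{i\leq k} e_i = \bigvee_{i\leq k} d_i$ and the computations of $\mathbf{d}(y_i)$ and $\mathbf{r}(y_i)$ are exactly the two places where a careful reader would pause, and you have handled both cleanly.
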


\begin{lemma}\label{lem:wren} 
If $I$ is a non-zero $\vee$-ideal where $e \in I$ and $f \preceq e$ then $f \in I$. 
\end{lemma}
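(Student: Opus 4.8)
The plan is to unpack the relation $f \preceq e$ into concrete data and then push that data into $I$ one piece at a time. First I would invoke the definition of $\preceq$ --- and, for tidiness, Lemma~\ref{lem:dingo} --- to fix a pencil $X = \{x_{1}, \ldots, x_{m}\}$ from $f$ to $e$ whose domains are pairwise orthogonal, so that $f = \bigvee_{i=1}^{m} \mathbf{d}(x_{i})$ and $\mathbf{r}(x_{i}) \leq e$ for every $i$. The orthogonality is a convenience rather than a necessity, but it will make the last step more transparent.

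The heart of the argument is the claim that each $\mathbf{d}(x_{i})$ already lies in $I$. Since $I$ is an ideal of $S$ it is in particular an order ideal, so from $\mathbf{r}(x_{i}) \leq e$ and $e \in I$ we get $\mathbf{r}(x_{i}) \in I$. Now $x_{i} = \mathbf{r}(x_{i}) x_{i}$, so $x_{i} \in I$ because $I$ is a left ideal, and hence $\mathbf{d}(x_{i}) = x_{i}^{-1} x_{i} \in I$ because $I$ is a right ideal. In other words, the arrow $\mathbf{d}(x_{i}) \stackrel{x_{i}}{\longrightarrow} \mathbf{r}(x_{i})$ transports membership in the ideal from its range back to its domain; this is exactly the ``self-conjugacy'' of ideals that is automatic for ideals of $S$.

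Finally I would reassemble $f$ from the $\mathbf{d}(x_{i})$. We have $f = \mathbf{d}(x_{1}) \vee \cdots \vee \mathbf{d}(x_{m})$, a join of idempotents each lying in $I$; since any two idempotents are compatible and each partial join $\mathbf{d}(x_{1}) \vee \cdots \vee \mathbf{d}(x_{k})$ is again an idempotent (so compatible with $\mathbf{d}(x_{k+1})$), an induction on $m$ using closure of the $\vee$-ideal $I$ under binary compatible joins gives $f \in I$. I do not anticipate a genuine obstacle here: the only subtlety is that a $\vee$-ideal is required to be closed only under \emph{binary} compatible joins, so the $m$-fold join must be built two terms at a time while remaining compatible at each stage --- and this is automatic, since every element involved is an idempotent (or, if one prefers to invoke the orthogonality supplied by Lemma~\ref{lem:dingo}, since each partial join is orthogonal to the next term). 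Note that the hypothesis that $I$ is non-zero is never used; it is stated only because those are the ideals of interest.
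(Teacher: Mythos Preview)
Your proof is correct and follows essentially the same route as the paper's: take a pencil $\{x_{i}\}$ witnessing $f \preceq e$, use the ideal property to get each $x_{i}$ and hence each $\mathbf{d}(x_{i})$ into $I$, and then use closure under compatible joins to obtain $f \in I$. The paper's version is slightly more compressed --- it writes $ex_{i} = x_{i}$ directly (which holds since $\mathbf{r}(x_{i}) \leq e$) rather than passing through the order-ideal observation, and it does not bother invoking Lemma~\ref{lem:dingo}, since (as you yourself note) the orthogonality of the domains is not actually needed.
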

\begin{proof} By definition, there is a pencil $\{x_{i}\}$ where $\mathbf{r}(x_{i}) \leq e$ and $f = \bigvee_{i=1}^{n} \mathbf{d}(x_{i})$.
But $ex_{i} = x_{i}$ and so $x_{i} \in I$, since $I$ is an ideal,
and similarly $\mathbf{d}(x_{i}) \in I$ for each $i$.
We now use the fact that $I$ is a $\vee$-ideal and so $f \in I$, as required.
\end{proof}

Define the equivalence relation $\equiv$ on the set of idempotents by putting $e \equiv f$ if, and only if, $e \preceq f$ and $f \preceq e$.
It can be proved that $S$ is $0$-simplifying if and only if $\equiv$ is the universal relation on the set of non-zero idempotents.
More on these matters, including proofs, can be found in \cite{Lawson2015}.
If $X \subseteq S$ is a non-empty subset, we denote by $X^{\vee}$ the set of all joins of finite non-empty compatible sets of elements of $X$.
clearly, $X \subseteq X^{\vee}$ and $(X^{\vee})^{\vee} = X^{\vee}$.
Let $e \in S$ be an idempotent.
Then $SeS$ is the principal ideal generated by $e$.
It can be checked that $(SeS)^{\vee}$ is a $\vee$-closed ideal and the smallest such ideal containing $e$.
We say that an idempotent $e$ is {\em large} if $S = (SeS)^{\vee}$.
This is an abstraction of the definition of {\em grande partie} given in \cite{HS}.
Result (2) below goes some way to justifying the use of the word `large'.

\begin{lemma}\label{lem:large} Let $S$ be a Boolean monoid.
\begin{enumerate}

\item An idempotent $e$ is large if, and only if, $1 \preceq e$.

\item If $e$ is a large idempotent then $\mu (e) > 0$ for every invariant mean $\mu$ on $S$.

\end{enumerate}
\end{lemma}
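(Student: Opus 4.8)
The plan is to prove the two parts in succession, deriving part~(2) from part~(1) and the axioms (IM1), (IM2). \emph{Part (1).} For the implication $\Rightarrow$, suppose $e$ is large, so that $1 \in S = (SeS)^{\vee}$ and hence $1 = \bigvee_{i=1}^{m} a_{i}$ for some finite compatible family with each $a_{i} = s_{i}e t_{i}$. My candidate pencil from $1$ to $e$ is $X = \{e t_{1}, \dots, e t_{m}\}$. I would check: $\mathbf{r}(e t_{i}) = e t_{i}t_{i}^{-1}e \leq e$ because $t_{i}t_{i}^{-1}\leq 1$; and $\mathbf{d}(e t_{i}) = t_{i}^{-1}e t_{i}$ dominates $\mathbf{d}(a_{i}) = t_{i}^{-1}(e s_{i}^{-1}s_{i}e)t_{i}$ (conjugate $e s_{i}^{-1}s_{i}e\leq e$ by $t_{i}$) while itself being $\leq 1$, so, using that $\mathbf{d}$ preserves compatible joins, $\bigvee_{i} \mathbf{d}(e t_{i})$ is squeezed between $\bigvee_{i}\mathbf{d}(a_{i}) = \mathbf{d}(1) = 1$ and $1$, hence equals $1$. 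For the implication $\Leftarrow$, suppose $1\preceq e$ through a pencil $\{x_{i}\}$ with $1 = \bigvee_{i}\mathbf{d}(x_{i})$ and $\mathbf{r}(x_{i})\leq e$; then $\mathbf{r}(x_{i})\leq e$ forces $e x_{i} = x_{i}$, so $x_{i}^{-1}x_{i} = x_{i}^{-1}e x_{i}$. For any $s\in S$, distributivity gives $s = s\cdot 1 = \bigvee_{i} s\mathbf{d}(x_{i})$, a compatible join with $s\mathbf{d}(x_{i}) = (sx_{i}^{-1})\,e\,x_{i}\in SeS$; hence $s\in(SeS)^{\vee}$, and as $s$ was arbitrary, $S = (SeS)^{\vee}$, i.e.\ $e$ is large.

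\emph{Part (2).} Let $\mu$ be an invariant mean on $S$, normalized at $1$ by the standing convention. By part~(1), $1\preceq e$, so by Lemma~\ref{lem:dingo} I may choose a pencil $\{x_{1},\dots,x_{m}\}$ from $1$ to $e$ whose domains $\mathbf{d}(x_{i})$ are pairwise orthogonal. Then $1 = \bigvee_{i}\mathbf{d}(x_{i})$ is an orthogonal join, so (IM2) yields $1 = \mu(1) = \sum_{i=1}^{m}\mu(\mathbf{d}(x_{i}))$, whence $\mu(\mathbf{d}(x_{i})) > 0$ for at least one $i$. For such an $i$: applying (IM1) to $\mathbf{d}(x_{i})\stackrel{x_{i}}{\longrightarrow}\mathbf{r}(x_{i})$ gives $\mu(\mathbf{d}(x_{i})) = \mu(\mathbf{r}(x_{i}))$, and since $\mathbf{r}(x_{i})\leq e$, Lemma~\ref{lem:thursday}(3) gives $\mu(\mathbf{r}(x_{i}))\leq\mu(e)$. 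Chaining these gives $\mu(e)\geq\mu(\mathbf{d}(x_{i})) > 0$. (Lemma~\ref{lem:dingo} can in fact be dispensed with here: the subadditivity $\mu(f\vee g)\leq\mu(f)+\mu(g)$, immediate from Lemma~\ref{lem:thursday}(2), already forces $1\leq\sum_{i}\mu(\mathbf{d}(x_{i}))$.)

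All the manipulations with $\mathbf{d}$, $\mathbf{r}$, the natural partial order and distributivity are routine. The one step I expect to need genuine care is the forward implication of part~(1): the idea is to discard the left factor $s_{i}$ of a typical generator $s_{i}e t_{i}$ of $(SeS)^{\vee}$ and retain $e t_{i}$, whose range lies below $e$ automatically, and whose domain, although possibly strictly larger than $\mathbf{d}(s_{i}e t_{i})$, is still bounded above by $1$, so that the join of the domains is pinned at $1$.
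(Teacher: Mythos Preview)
Your proof is correct. Part~(2) is essentially the paper's argument: the paper sums all the terms to extract the sharper bound $\mu(e)\geq 1/m$, while you pigeonhole on a single positive term, but either suffices for $\mu(e)>0$.

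For part~(1) the approaches differ in detail, though not in spirit. In the forward direction the paper first notes that each summand $a_{i}$, lying below the idempotent~$1$, is itself an idempotent $f_{i}=a_{i}eb_{i}$, and then builds a mutually inverse pair $x_{i}=f_{i}a_{i}e$, $y_{i}=eb_{i}f_{i}$ with $\mathbf{d}(y_{i})=f_{i}$ exactly; you instead take the pencil $\{et_{i}\}$ directly and use the squeeze $\bigvee_{i}\mathbf{d}(a_{i})\leq\bigvee_{i}\mathbf{d}(et_{i})\leq 1$, which avoids the idempotent observation and is arguably cleaner. In the converse direction the paper just invokes Lemma~\ref{lem:wren} (any $\vee$-ideal containing $e$ contains $1$), whereas you unfold that argument by hand to exhibit each $s\in S$ explicitly as a finite compatible join of elements of $SeS$. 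Neither difference is substantive.
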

\begin{proof} (1) Suppose that $S = (SeS)^{\vee}$.
Then, in particular, $1 \in (SeS)^{\vee}$.
Thus there are idempotents $f_{1}, \ldots, f_{n} \in SeS$ such that $1 = f_{1} \vee \ldots \vee f_{n}$.
But each $f_{i} = a_{i}eb_{i}$.
Define $x_{i} = f_{i}a_{i}e$ and $y_{i} = eb_{i}f_{i}$.
Then $x_{i}$ and $y_{i}$ are mutually inverse.
Observe that $f_{i} = x_{i}y_{i}$ and $y_{i}x_{i} \leq e$.
We have therefore defined a pencil from $1$ to $e$, as required.
To prove the converse, we use Lemma~\ref{lem:wren} and observe that any $\vee$-ideal that contains $e$ must contain 1 and so is the whole of $S$.

(2) By definition, $1 \preceq e$ and so there is a pencil $\{a_{1}, \ldots, a_{n} \}$ such that $1 = \bigvee_{i=1}^{n} \mathbf{d}(a_{i})$, which we may assume is an orthogonal join, and $\mathbf{r}(a_{i}) \leq e$.
Now $\mu (\mathbf{r}(a_{i})) \leq \mu (e)$.
Thus $\sum_{i=1}^{n} \mu (\mathbf{r}(a_{i})) \leq n \mu (e)$.
But $\mu (\mathbf{r}(a_{i})) = \mu (\mathbf{d}(a_{i}))$.
It follows that $1 \leq n \mu (e)$ and so $\mu (e) \geq \frac{1}{n}$, as required.
\end{proof}

The following is the abstract reformulation of \cite[Proposition 2]{HS}.
It enables us to show that a Boolean monoid is weakly paradoxical by finding a single special element.

\begin{proposition}\label{prop:bike} Let $S$ be a Boolean monoid.
Then the following are equivalent.
\begin{enumerate}

\item $S$ is weakly paradoxical.

\item There exists an element $a \in S$ such that $\mathbf{d}(a) = 1$ and $\overline{\mathbf{r}(a)}$ is large.

\end{enumerate}
\end{proposition}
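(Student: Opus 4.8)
The plan is to prove the two implications separately; $(1)\Rightarrow(2)$ is essentially immediate. Suppose $a$ and $b$ witness weak paradoxicality, so $\mathbf{d}(a)=1=\mathbf{d}(b)$ and $\mathbf{r}(a)\perp\mathbf{r}(b)$. In the Boolean algebra $E(S)$ orthogonality means $\mathbf{r}(b)\leq\overline{\mathbf{r}(a)}$, so the singleton $\{b\}$ is already a pencil from $1$ to $\overline{\mathbf{r}(a)}$; hence $1\preceq\overline{\mathbf{r}(a)}$, and by Lemma~\ref{lem:large}(1) the idempotent $\overline{\mathbf{r}(a)}$ is large. Thus $a$ is the element required in $(2)$.

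For $(2)\Rightarrow(1)$, write $e=\mathbf{r}(a)$, so $\bar{e}=\overline{\mathbf{r}(a)}$ is large. By Lemma~\ref{lem:large}(1) we have $1\preceq\bar{e}$, and by Lemma~\ref{lem:dingo} we may choose a pencil $\{x_{1},\dots,x_{n}\}$ from $1$ to $\bar{e}$ whose domains $d_{i}:=\mathbf{d}(x_{i})$ are pairwise orthogonal, with $\bigvee_{i=1}^{n}d_{i}=1$ and $r_{i}:=\mathbf{r}(x_{i})\leq\bar{e}$. One cannot simply form $\bigvee_{i}x_{i}$, because the ranges $r_{i}$ need not be orthogonal; the idea is to use the powers of $a$ to carry the $i$-th piece into a region disjoint from the others. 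The key observation is that $\bar{e}a=\bar{e}(ea)=(\bar{e}e)a=0$, since $ea=\mathbf{r}(a)a=a$, whence $\bar{e}a^{k}=0$ for all $k\geq1$. Put $b_{i}:=a^{i-1}x_{i}$, $b:=\bigvee_{i=1}^{n}b_{i}$, and $s_{i}:=a^{i-1}\bar{e}\,a^{-(i-1)}$.

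The verifications, all routine, run as follows. (i) $\mathbf{d}(b_{i})=d_{i}$ and $\mathbf{r}(b_{i})\leq s_{i}$, using $\mathbf{d}(a^{i-1})=1$ (forced by $\mathbf{d}(a)=1$) and monotonicity of multiplication. (ii) The $s_{i}$ are pairwise orthogonal: for $i<j$ the factor $\bar{e}a^{j-i}=0$ occurs inside $s_{i}s_{j}$. (iii) For $i\neq j$, $b_{i}b_{j}^{-1}=0$ (from $d_{i}d_{j}=0$), and for $i<j$, $b_{i}^{-1}b_{j}=x_{i}^{-1}a^{j-i}x_{j}=0$ (because $r_{i}\wedge\mathbf{r}(a^{j-i})\leq\bar{e}\wedge e=0$ kills $x_{i}^{-1}a^{j-i}$); so the $b_{i}$ are pairwise orthogonal and $b$ is a genuine element of the Boolean monoid. (iv) $d_{i}=\mathbf{d}(b_{i})\leq\mathbf{d}(b)$ for each $i$, so $\mathbf{d}(b)=1$. (v) Conjugating the orthogonal join $1=e\vee\bar{e}$ by $a^{k}$ gives $\mathbf{r}(a^{k})=\mathbf{r}(a^{k+1})\vee s_{k+1}$ orthogonally; iterating, $1=\mathbf{r}(a^{n})\vee s_{1}\vee\cdots\vee s_{n}$ orthogonally, so $s_{1}\vee\cdots\vee s_{n}=\overline{\mathbf{r}(a^{n})}$ and hence $\mathbf{r}(b)=\bigvee_{i}\mathbf{r}(b_{i})\leq\overline{\mathbf{r}(a^{n})}$. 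Since also $\mathbf{d}(a^{n})=1$, the pair $(a^{n},b)$ exhibits $S$ as weakly paradoxical.

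The one genuinely non-routine point is step (v) together with the recognition that the correct second witness is $a^{n}$, not the original $a$ nor some hoped-for single element with range below $\bar{e}$: there is no motion available carrying $\bar{e}$ into itself, so the pencil cannot be amalgamated inside $\bar{e}$ directly, but its $n$ pieces can be spread across the pairwise disjoint shells $s_{1},\dots,s_{n}$, whose join turns out to be exactly $\overline{\mathbf{r}(a^{n})}$. Everything else is bookkeeping with the natural partial order and the identities $a^{-p}a^{q}=a^{q-p}$ for $p\leq q$ and $\bar{e}a=0$.
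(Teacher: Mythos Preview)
Your proof is correct and follows essentially the same route as the paper. In both arguments one takes the orthogonal pencil $\{x_i\}$ from $1$ to $\bar e$, pushes the $i$-th piece forward by $a^{i-1}$ to land in the conjugate $s_i=a^{i-1}\bar e\,a^{-(i-1)}$, forms the orthogonal join $b=\bigvee a^{i-1}x_i$, and exhibits $(a^{n},b)$ as the weakly paradoxical pair. Your telescoping identity in step~(v), giving $s_1\vee\cdots\vee s_n=\overline{\mathbf r(a^{n})}$ exactly, is a slightly sharper formulation than the paper's direct check that each $s_i\perp\mathbf r(a^{n})$, but the content is the same.
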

\begin{proof} (2)$\Longrightarrow$(1). Let $a$ be an element such that  $\mathbf{d}(a) = 1$ and $f = \overline{\mathbf{r}(a)}$ is large.
Thus $1 \preceq f$.
Therefore there exists a pencil $\{b_{1}, \ldots, b_{m} \}$ such that $1 = \bigvee_{i=1}^{n} \mathbf{d}(b_{i})$, 
an orthogonal join, 
and $\mathbf{r}(b_{i}) \leq f$.
Observe that 
$$\{f,afa^{-1}, \ldots, a^{m-1}f a^{-(m-1)}, a^{m}fa^{-m} \}$$ 
is an orthogonal set of idempotents.
To see why, put $f_{i} = a^{i-1}f a^{-(i-1)}$ and observe that $f_{i}f_{j}$ contains a factor $fa$ which is equal to zero because $f \perp \mathbf{r}(a)$.
Consider now the set of elements $\{b_{1},ab_{2}, \ldots, a^{m-1}b_{m}\}$.
Then the set of domains of these elements is the set $b_{i}^{-1}b_{i}$ and so is an orthogonal set whose join is 1.
Observe that $\mathbf{r}(a^{i-1}b_{i}) \leq f_{i}$ and so the set of ranges also forms an orthogonal set.
It follows that  $\{b_{1},ab_{2}, \ldots, a^{m-1}b_{m}\}$ is an orthogonal set.
Define $b = \bigvee_{i=1}^{m} a^{i-1}b_{i}$.
Then $\mathbf{d}(b) = 1$ and $\mathbf{r}(b) \perp \mathbf{r}(a^{m})$.
To see why the latter result holds, observe that $f_{i} \perp \mathbf{r}(a^{m})$
because the product $f_{i}\mathbf{r}(a^{m})$ contains $fa = 0$ as a factor.
It follows that $\mathbf{r}(b_{i}) \perp \mathbf{r}(a^{m})$.
This, by the definition of $b$, implies that $\mathbf{r}(b) \perp \mathbf{r}(a^{m})$.
Thus the set $\{a^{m},b\}$ proves that $S$ is weakly paradoxical.
The proof of the converse is {\em banale}.
\end{proof}

\begin{remark}{\em The above result may be viewed in the following light.
A set $X$ is {\em Dedekind infinite} if there is an injective map $f \colon X \rightarrow X$ whose image is a proper subset of $X$.
This is equivalent to saying that the Boolean inverse monoid $I(X)$ contains a copy of the bicyclic monoid $P_{1}$.
To say that a Boolean inverse monoid is weakly paradoxical can therefore be viewed as saying that it is infinite {\em but in a stronger sense}.
Thus $X$ is Dedekind infinite but the complement of the image of the injection $f$ that proves this is large.}
\end{remark}

The following is also immediate by Lemma~\ref{lem:sam} and Proposition~\ref{prop:bike},  but it is instructive to give a direct proof.

\begin{lemma}\label{lem:dahon} Let $S$ be a Boolean monoid.
If there exists an element $a \in S$ such that $\mathbf{d}(a) = 1$ and $\overline{\mathbf{r}(a)}$ is large then $S$ cannot have an invariant mean. 
\end{lemma}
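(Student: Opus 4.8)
\emph{Proof proposal.} The plan is to argue by contradiction, using the single fact that a large idempotent is forced to carry strictly positive mass. Suppose, for contradiction, that $S$ possesses an invariant mean $\mu$; by our standing convention for monoids it is normalized at the identity, so $\mu(1) = 1$. First I would observe that the hypothesis $\mathbf{d}(a) = 1$ together with axiom (IM1) gives $\mu(\mathbf{r}(a)) = \mu(\mathbf{d}(a)) = \mu(1) = 1$.

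Next I would invoke the hypothesis that $f := \overline{\mathbf{r}(a)}$ is large. Lemma~\ref{lem:large}(2) applies verbatim and tells us that $\mu(f) > 0$.

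Finally, since $f$ is the Boolean complement of $\mathbf{r}(a)$ in $E(S)$, we have $\mathbf{r}(a)f = 0$, hence $\mathbf{r}(a) \perp f$, and $\mathbf{r}(a) \vee f = 1$. Applying axiom (IM2) to this orthogonal join gives $1 = \mu(1) = \mu(\mathbf{r}(a)) + \mu(f) = 1 + \mu(f)$, so $\mu(f) = 0$, contradicting the previous step. Therefore $S$ admits no invariant mean.

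There is essentially no obstacle here: the only substantive ingredient is Lemma~\ref{lem:large}(2), and the rest is routine manipulation of the two axioms and of the complementation in the Boolean algebra of idempotents. The one point worth flagging is precisely the role of largeness --- it is exactly what prevents $\overline{\mathbf{r}(a)}$ from being negligible for $\mu$, and thus what forces the total mass of $1$ strictly above $1$.
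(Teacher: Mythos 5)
Your proof is correct and is essentially identical to the paper's own argument: both deduce $\mu(\mathbf{r}(a))=1$ from (IM1), conclude $\mu(\overline{\mathbf{r}(a)})=0$ from the orthogonal complement, and derive the contradiction from Lemma~\ref{lem:large}(2). The only cosmetic difference is that you expand the complement step via (IM2) where the paper cites its earlier five-finger exercise.
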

\begin{proof}  Assume that there is an invariant mean $\mu$.
Then $\mu (\mathbf{r}(a)) = 1$ and so $\mu (\overline{\mathbf{r}(a)}) = 0$.
But by Lemma~\ref{lem:large}, this contradicts the assumption that $\overline{\mathbf{r}(a)}$ is large.
\end{proof}

In a $0$-simplifying Boolean monoid every non-zero idempotent is large.
We therefore have the following by Lemma~\ref{lem:faithful}.

\begin{corollary}\label{cor:tern} If a $0$-simplifying Boolean monoid has an invariant mean then all invariant means are faithful.
Thus a necessary condition that such a monoid have an invariant mean is that $\mathscr{D} = \mathscr{J}$.
\end{corollary}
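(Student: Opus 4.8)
The plan is to chain together the two halves of Lemma~\ref{lem:large} with Lemma~\ref{lem:faithful}, using the fact recalled immediately above the statement — proved in \cite{Lawson2015} — that in a $0$-simplifying Boolean monoid the relation $\equiv$ is universal on the non-zero idempotents, so that every non-zero idempotent is large.

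First I would fix an arbitrary invariant mean $\mu$ on the $0$-simplifying Boolean monoid $S$ and an arbitrary non-zero idempotent $e$. Since $S$ is $0$-simplifying, $e$ is large, i.e.\ $1 \preceq e$ by Lemma~\ref{lem:large}(1), and hence $\mu(e) > 0$ by Lemma~\ref{lem:large}(2). Together with $\mu(0) = 0$ from Lemma~\ref{lem:thursday}(1), this shows that $\mu(e) = 0$ forces $e = 0$; that is, $\mu$ is faithful. As $\mu$ was arbitrary, all invariant means on $S$ are faithful. This proves the first assertion.

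For the second assertion I would simply apply Lemma~\ref{lem:faithful}: a Boolean inverse monoid carrying a faithful invariant mean satisfies $\mathscr{D} = \mathscr{J}$. Hence if a $0$-simplifying Boolean monoid admits any invariant mean at all, that mean is faithful by the first part, and therefore $\mathscr{D} = \mathscr{J}$ must hold in it; equivalently, $\mathscr{D} \neq \mathscr{J}$ obstructs the existence of an invariant mean. There is no genuine obstacle here, since all the substantive content sits in Lemma~\ref{lem:large} and Lemma~\ref{lem:faithful}; the only point needing a word of care is the invocation of the implication ``$0$-simplifying $\Longrightarrow$ every non-zero idempotent is large'', which is precisely the sentence stated just before the corollary and may be used freely.
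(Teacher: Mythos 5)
Your proposal is correct and is exactly the argument the paper intends: the sentence preceding the corollary ($0$-simplifying implies every non-zero idempotent is large), combined with Lemma~\ref{lem:large}(2) to get faithfulness and Lemma~\ref{lem:faithful} to get $\mathscr{D} = \mathscr{J}$. No gaps; the paper simply leaves this chain implicit rather than writing it out.
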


We again adopt terminology from \cite{HS}.
A Boolean monoid is said to be {\em strongly paradoxical} if there exists a pair of elements $a$ and $b$ such that 
$\mathbf{d}(a) = 1 = \mathbf{d} (b)$ and $\mathbf{r}(a) \perp \mathbf{r}(b)$ and $1 = \mathbf{r}(a) \vee \mathbf{r}(b)$.

\begin{remark} 
{\em A Boolean monoid is strongly paradoxical precisely when there is a monoid embedding of the Cuntz inverse  monoid $C_{2}$ into $S$. 
This implies that the Thompson group $V$ is a subgroup of the group of units.
See \cite{Lawson2007}, \cite{LS}}.
\end{remark}

\begin{lemma}\label{lem:arden} A Boolean monoid in which  $\mathscr{D} = \mathscr{J}$ is weakly paradoxical if, and only if, it is strongly paradoxical.
\end{lemma}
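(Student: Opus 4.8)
The plan is to dispatch one implication trivially and to prove the other by exploiting the hypothesis $\mathscr{D}=\mathscr{J}$ — a Schröder–Bernstein type property — in order to enlarge the range of one of the two witnessing elements until the two ranges together exhaust the identity. The conceptual point is that $\mathscr{D}=\mathscr{J}$ is exactly what allows the ``leftover'' idempotent $\overline{\mathbf{r}(a)\vee\mathbf{r}(b)}$ to be reabsorbed.

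If $S$ is strongly paradoxical, witnessed by a pair $a,b$, then these same $a,b$ satisfy $\mathbf{d}(a)=1=\mathbf{d}(b)$ and $\mathbf{r}(a)\perp\mathbf{r}(b)$, so $S$ is weakly paradoxical; this direction needs nothing. For the converse, suppose $S$ is weakly paradoxical via $a,b$ with $\mathbf{d}(a)=1=\mathbf{d}(b)$ and $e:=\mathbf{r}(a)\perp f:=\mathbf{r}(b)$. I would set $g:=\overline{e\vee f}$, the complement in the unital Boolean algebra $E(S)$, so that $1=e\vee f\vee g$ is an orthogonal join. Since $\mathbf{d}(a)=1$ and $\mathbf{r}(a)=e$ we have $1\,\mathscr{D}\,e$, and as $e\leq e\vee g$ this gives $1\leq_{J} e\vee g$; trivially $e\vee g\leq_{J}1$. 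Hence $1\,\mathscr{J}\,(e\vee g)$, and the hypothesis $\mathscr{D}=\mathscr{J}$ upgrades this to $1\,\mathscr{D}\,(e\vee g)$, so there is $c\in S$ with $\mathbf{d}(c)=1$ and $\mathbf{r}(c)=e\vee g$.

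It then remains to check that $\{c,b\}$ witnesses strong paradoxicality. The domains are both $1$. For orthogonality, $ef=0$ and $gf=0$ (the latter because $g\leq\bar f$, since $f\leq e\vee f$), so by distributivity of multiplication over joins $\mathbf{r}(c)\,\mathbf{r}(b)=(e\vee g)f=0$ and likewise $f(e\vee g)=0$, i.e.\ $\mathbf{r}(c)\perp\mathbf{r}(b)$. Finally $\mathbf{r}(c)\vee\mathbf{r}(b)=(e\vee g)\vee f=e\vee f\vee g=1$. I do not expect a genuine obstacle here: the only thing needing care is the Boolean-algebra bookkeeping with the complement $g$ and the routine verification that passing from $e$ to $e\vee g$ preserves orthogonality with $f$ and the relevant $\mathscr{J}$-comparisons.
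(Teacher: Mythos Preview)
Your proof is correct and follows essentially the same approach as the paper. The paper keeps $a$ and uses $1\,\mathscr{D}\,\mathbf{r}(b)\leq\overline{\mathbf{r}(a)}$ to obtain $c$ with $\mathbf{r}(c)=\overline{\mathbf{r}(a)}$, while you symmetrically keep $b$ and obtain $c$ with $\mathbf{r}(c)=e\vee g=\overline{\mathbf{r}(b)}$; noting that your $e\vee g$ is simply $\overline{f}$ would streamline the bookkeeping, but the argument is otherwise identical.
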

\begin{proof} Only one direction needs proving.
Suppose that the Boolean inverse monoid is weakly paradoxical.
Thus there are elements $a$ and $b$ such that $\mathbf{d}(a) = 1 = \mathbf{d}(b)$
and $\mathbf{r}(a) \perp \mathbf{r}(b)$.
We have that $1 \, \mathscr{D} \, \mathbf{r}(b) \leq \overline{\mathbf{r}(a)}$.
Thus $1 \leq_{J}  \overline{\mathbf{r}(a)}$.
Since $\overline{\mathbf{r}(a)} \leq 1$ we also have that  $\overline{\mathbf{r}(a)} \leq_{J} 1$.
It follows that $1 \, \mathscr{J} \, \overline{\mathbf{r}(a)}$ and so,  by assumption, $1 \, \mathscr{D} \, \overline{\mathbf{r}(a)}$.
Thus there is an element $c$ such that $\mathbf{d}(c) = 1$ and $\mathbf{r}(c) = \overline{\mathbf{r}(a)}$.
Then $c \in S$ is such that $\mathbf{d}(c) = 1$ and $\mathbf{r}(a) \perp \mathbf{r}(c)$
and $\mathbf{r}(a) \vee \mathbf{r}(c) = 1$, showing that the inverse monoid is strongly paradoxical.
\end{proof}

The following lemma provides a situation where a Boolean monoid automatically satisfies $\mathscr{D} = \mathscr{J}$.
It is nothing other than the abstract version of a result from \cite{HS}.

\begin{lemma}\label{lem:mary} Let $S$ be a Boolean monoid whose Boolean algebra of idempotents has countable joins.
\begin{enumerate}

\item Let $1 \stackrel{a}{\longrightarrow} \mathbf{r}(a)$ where $\mathbf{r}(a) \neq 1$.
Let $e$ be any idempotent orthogonal to  $\mathbf{r}(a)$.
Then $1 \, \mathscr{D} \, \bar{e}$.

\item Let $i$ be any idempotent in $S$.
Let $i \stackrel{a}{\longrightarrow} \mathbf{r}(a)$ where $\mathbf{r}(a) < i$.
Let $e \leq i$ be any idempotent orthogonal to  $\mathbf{r}(a)$.
Then $i \, \mathscr{D} \, i\bar{e}$.

\item $\mathscr{D} = \mathscr{J}$.

\end{enumerate}
\end{lemma}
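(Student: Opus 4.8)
The plan is to prove parts (1) and (2) first and then derive (3), which is really the point of the lemma: a Hopfian-type property forcing $\mathscr{D}=\mathscr{J}$ in the presence of countable joins. The engine behind (1) is the classical ``back-and-forth'' / Cantor–Schröder–Bernstein construction, but executed inside the inverse monoid using the countable joins of idempotents. Given $a$ with $\mathbf{d}(a)=1$ and $\mathbf{r}(a)\neq 1$, and $e\perp\mathbf{r}(a)$, I would iterate $a$ to transport $e$ into ever-deeper ``copies'': set $e_0=e$ and $e_{n+1}=ae_na^{-1}$, so each $e_n$ is an idempotent with $e_n\,\mathscr{D}\,e$, and the fact that $e\perp\mathbf{r}(a)$ (together with $\mathbf{r}(a^{n})$ absorbing everything in the image of $a^n$) should give that the $e_n$ are pairwise orthogonal. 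Let $E=\bigvee_{n\geq 0}e_n$, which exists by the countable-joins hypothesis. Then $aEa^{-1}=\bigvee_{n\geq 1}e_n=E\bar{e}_0=E\bar e$, so $a$ restricted appropriately witnesses $E\,\mathscr{D}\,E\bar e$; meanwhile on the complement $\bar E$ the element $a$ already restricts to a bijection onto itself (one must check $a\bar E a^{-1}=\bar E$ using $\mathbf{d}(a)=1$ and the definition of $E$). Taking the orthogonal join of these two $\mathscr{D}$-witnesses gives $1=E\vee\bar E \,\mathscr{D}\, E\bar e\vee\bar E=\bar e$, which is exactly (1). Part (2) is the same argument carried out inside the local monoid $iSi$, which is again a Boolean monoid with countable joins; here one uses that $\mathbf{r}(a)<i$ plays the role that $\mathbf{r}(a)\neq 1$ played in the ambient monoid.

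For (3), I would show $e\,\mathscr{J}\,f\Rightarrow e\,\mathscr{D}\,f$. By definition of $\mathscr{J}$ there are idempotents $i,j$ with $e\,\mathscr{D}\,i\leq f$ and $f\,\mathscr{D}\,j\leq e$. Composing the two $\mathscr{D}$-arrows gives an element $a$ with $\mathbf{d}(a)=e$ and $\mathbf{r}(a)\leq e$; if $\mathbf{r}(a)=e$ then already $e\,\mathscr{D}\,\mathbf{r}(a)$ and one tracks the equalities back to conclude $e\,\mathscr{D}\,f$, so assume $\mathbf{r}(a)<e$. Now the ``lost'' idempotent $e\,\overline{\mathbf{r}(a)}$ is a nonzero idempotent $\leq e$ orthogonal to $\mathbf{r}(a)$, so part (2) (with $i=e$) yields $e\,\mathscr{D}\,e\,\overline{e\,\overline{\mathbf{r}(a)}}=\mathbf{r}(a)$; hence $e\,\mathscr{D}\,\mathbf{r}(a)$ after all, and unwinding the chain of $\mathscr{D}$'s gives $e\,\mathscr{D}\,i\,\mathscr{D}\,f$. (The bookkeeping of which subidempotent is $\mathscr{D}$-related to which should be organized so that the composite arrow $e\to\mathbf{r}(a)$ factors through $i$ and through $j$, so that collapsing $\mathbf{r}(a)$ back up to $e$ collapses $i$ up to $f$ as well.)

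The main obstacle I anticipate is the orthogonality and complementation bookkeeping in part (1): verifying carefully that the translates $e_n=a^{n}ea^{-n}$ are pairwise orthogonal, that $aEa^{-1}=E\bar e$ rather than merely $\leq E\bar e$, and that $a$ genuinely restricts to a $\mathscr{D}$-equivalence $\bar E\to\bar E$ (this needs $\mathbf{d}(a)=1$ so that $a^{-1}$ is everywhere defined, plus the observation that $\bar E$ is $a$-invariant because $E$ is built to be $a$-``subinvariant''). All of this is routine inverse-semigroup manipulation once the right picture is fixed — it is exactly the abstract shadow of the Schröder–Bernstein argument for sets — but it is the step where sign errors and missing factors are easiest to make. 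Parts (2) and (3) are then essentially formal consequences, with (3) being a short deduction once (2) is in hand.
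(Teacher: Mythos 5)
Your strategy for part (1) is the right one (and is the paper's): form $E=\bigvee_{k\ge 0}a^{k}ea^{-k}$ using the countable joins, and show that conjugation by $a$ carries $E$ to $E\bar e$. But the step where you handle the complement contains a genuine error. You claim $a\bar Ea^{-1}=\bar E$, so that $a$ itself restricts to a $\mathscr{D}$-witness from $\bar E$ to $\bar E$, and then you glue the two restrictions of $a$ together. Gluing $a|_{E}$ with $a|_{\bar E}$ just reconstitutes $a$, whose range is $\mathbf{r}(a)$ and not $\bar e$, so this argument is circular; and the identity $a\bar Ea^{-1}=\bar E$ is false in general. Indeed $aEa^{-1}\vee a\bar Ea^{-1}=a1a^{-1}=\mathbf{r}(a)$ is an orthogonal join, so $a\bar Ea^{-1}$ is the relative complement of $E\bar e$ in $\mathbf{r}(a)$, namely $\mathbf{r}(a)(\bar E\vee e)=\mathbf{r}(a)\bar E$; this equals $\bar E$ only when $\bar E\le\mathbf{r}(a)$, i.e.\ only when $e$ is all of $\overline{\mathbf{r}(a)}$. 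The lemma must hold for an arbitrary $e\perp\mathbf{r}(a)$ --- part (3) applies it with an idempotent that need not exhaust the complement of the range of the composite arrow. Concretely, in $I(\mathbb{N})$ take $a\colon n\mapsto n+2$ and $e=1_{\{0\}}$; then $E=1_{\{0,2,4,\dots\}}$, $\bar E=1_{\{1,3,5,\dots\}}$, and $a\bar Ea^{-1}=1_{\{3,5,7,\dots\}}\neq\bar E$, so your glued element is $a$ itself and only witnesses the trivial fact $1\,\mathscr{D}\,\mathbf{r}(a)$.

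The repair is small and is exactly what the paper does: act by $a$ on $E$ and by the \emph{identity} on $\bar E$. Put $b=aE\vee\bar E$; this is an orthogonal join since $\mathbf{d}(aE)=E\perp\bar E$ and $\mathbf{r}(aE)=aEa^{-1}\le E$. Then $b^{-1}b=E\vee\bar E=1$ while $bb^{-1}=E\bar e\vee\bar E=\bar e$, the last equality because $e\le E$ forces $\bar E\le\bar e$. With this correction the rest of your outline is sound: the pairwise orthogonality of the $a^{k}ea^{-k}$, the identity $aEa^{-1}=\bigvee_{k\ge 1}a^{k}ea^{-k}$ (which does require the short argument that conjugation by $a$ commutes with the countable join, using $a^{-1}a=1$), the passage to the local monoid $iSi$ for part (2), and the deduction of part (3) --- where, as you yourself anticipate, the clean way to do the bookkeeping is to apply part (2) to the composite arrow with the orthogonal idempotent $e\,\overline{f'}$ (where $f'\le e$ is the idempotent $\mathscr{D}$-related to $e'$) rather than with $e\,\overline{\mathbf{r}(a)}$, so that the conclusion reads directly $e\,\mathscr{D}\,f'\,\mathscr{D}\,e'$.
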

\begin{proof} (1)  Define $f = \bigvee_{k=0}^{\infty} a^{k}ea^{-k}$.
We prove that $f = e \vee afa^{-1}$ by showing that
$afa^{-1} = \bigvee_{k=0}^{\infty} a^{k+1}ea^{-(k+1)}$.
It is easy to see that
$\bigvee_{k=0}^{\infty} a^{k+1}ea^{-(k+1)} \leq afa^{-1}$.
Suppose that $a^{k+1}ea^{-(k+1)} \leq j$ for all $k$.
Then $a^{k}ea^{-k} \leq a^{-1}ja$ using the fact that $a^{-1}a = 1$.
It follows that $f \leq a^{-1}ja$ and so $afa^{-1} \leq j$, as required.
Observe also that $e \perp afa^{-1}$.
Thus $afa^{-1} = f\bar{e}$ and $e \leq f$.
Consider the element $af$.
Then $af \in fSf$.
Put $b = af \vee \bar{f}$, an orthogonal join.
Then $b^{-1}b = f \vee \bar{f} = 1$ and $bb^{-1} = f\bar{e} \vee \bar{f} = \bar{e}$,
where we use the fact that $\bar{f} \leq \bar{e}$.

(2) We simply apply the result in part (1) to the local monoid $iSi$.

(3) Suppose that $e \, \mathscr{D} \, f \leq e'$ and $e' \,  \mathscr{D} \, f' \leq e$.
Let $e \stackrel{a}{\longrightarrow} f$ and $e' \stackrel{b}{\longrightarrow} f'$.
The element $ba$ has the property that $\mathbf{d}(ba) = e$ and $\mathbf{r}(ba) \leq f' \leq e$.
We now apply part (2), with $\mathbf{r}(ba) \perp e\overline{f'}$.
It follows that there is an element $e \stackrel{c}{\longrightarrow} f'$.
Thus $e \stackrel{b^{-1}c}{\longrightarrow} e'$  and so $e \, \mathscr{D} \, e'$.
\end{proof}

\subsection{Examples of invariant means}

In this section, we shall construct some examples of invariant means on Boolean monoids.
As our starting point, we shall consider finite direct products of finite symmetric inverse monoids called {\em semisimple inverse monoids}.
Observe that since direct products of Boolean monoids are Boolean monoids it follows that semisimple inverse monoids are Boolean.
In what follows, $(x_1,\dots, x_p)^T$ etc denotes a column vector.

\begin{lemma}\label{lem:gear} \mbox{}
\begin{enumerate}

\item Finite symmetric inverse monoids have unique invariant means.

\item Let $S = I_{n(1)}\times I_{n(2)}\times \cdots\times I_{n(k)}$ be a semisimple inverse monoid.
Then invariant means on $S$ are in bijective  correspondence with non-negative vectors with real entries 
${\mathbf x}=(x_1,\dots, x_k)^T$ such that the following constraint equation 
$$
n(1)x_1+\cdots + n(k)x_k=1.
$$
holds.
Positive such vectors correspond to faithful invariant means.

\end{enumerate}
\end{lemma}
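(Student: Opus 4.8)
The plan is to handle both parts by first pinning down the mean on a single factor $I_n$ and then assembling factors. For part (1), I would begin from the observation that in $I_n$ any two singleton idempotents $1_{\{i\}}$ and $1_{\{j\}}$ are $\mathscr{D}$-related, via the partial bijection $i \mapsto j$; hence (IM1) forces $\mu(1_{\{i\}})$ to be a common value $t$. Since the identity decomposes as the orthogonal join $1 = \bigvee_{i=1}^{n} 1_{\{i\}}$, condition (IM2) together with the normalization $\mu(1) = 1$ gives $nt = 1$, so $t = 1/n$. As every idempotent $1_A$ is an orthogonal join of $|A|$ singletons, this yields $\mu(1_A) = |A|/n$, so the invariant mean is unique. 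One still has to check that this formula \emph{is} an invariant mean: (IM2) is just additivity of cardinality over disjoint sets, and (IM1) holds because for a partial bijection $s$ the idempotents $s^{-1}s$ and $ss^{-1}$ have equal cardinality.

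For part (2), write $E(S) = \prod_{j=1}^{k}\mathcal{P}(\{1,\dots,n(j)\})$ and, for $1 \le j \le k$ and $1 \le i \le n(j)$, let $f^{(i)}_{j} \in E(S)$ be the idempotent equal to $1_{\{i\}}$ in the $j$-th coordinate and $0$ elsewhere. Given an invariant mean $\mu$ on $S$, the partial bijection of the $j$-th factor sending $i$ to $i'$ lifts to an element of $S$ witnessing $f^{(i)}_{j} \, \mathscr{D} \, f^{(i')}_{j}$, so by (IM1) the quantity $x_{j} := \mu(f^{(i)}_{j}) \ge 0$ does not depend on $i$. Every idempotent is the orthogonal join $(1_{A_{1}},\dots,1_{A_{k}}) = \bigvee_{j=1}^{k}\bigvee_{i\in A_{j}} f^{(i)}_{j}$, so (IM2) forces $\mu(1_{A_{1}},\dots,1_{A_{k}}) = \sum_{j=1}^{k} |A_{j}| x_{j}$, and evaluating at the identity gives the constraint $\sum_{j=1}^{k} n(j) x_{j} = 1$. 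Conversely, for any non-negative $\mathbf{x} = (x_{1},\dots,x_{k})^{T}$ satisfying the constraint, I would define $\mu_{\mathbf{x}}$ by the same formula and verify (IM1) componentwise (each $s_{j}$ is a partial bijection), (IM2) componentwise (orthogonality in a product is componentwise disjointness), non-negativity, and $\mu_{\mathbf{x}}(1) = 1$. The two assignments are mutually inverse, which gives the claimed bijection.

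For the faithfulness statement: if some $x_{j} = 0$ then $f^{(1)}_{j} \neq 0$ while $\mu_{\mathbf{x}}(f^{(1)}_{j}) = 0$, so $\mu_{\mathbf{x}}$ is not faithful; conversely, if every $x_{j} > 0$ then $\sum_{j} |A_{j}| x_{j} = 0$ forces $A_{j} = \emptyset$ for all $j$, i.e. the idempotent is $0$, so $\mu_{\mathbf{x}}$ is faithful. I do not expect a genuine obstacle here — once one notes that singleton idempotents within a fixed factor are $\mathscr{D}$-related, everything is forced — but the point that needs a little care is checking that the formula built from an arbitrary admissible $\mathbf{x}$ really satisfies (IM1) for \emph{all} elements of $S$, not merely for idempotents, which is exactly where the product structure (and the Wagner--Preston picture of each factor $I_{n(j)}$) is used.
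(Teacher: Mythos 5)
Your proof is correct and follows essentially the same route as the paper: identify the atoms of $E(S)$, use the fact that atoms within a fixed factor $I_{n(j)}$ are $\mathscr{D}$-related (and atoms in different factors are not) to reduce an invariant mean to a non-negative vector $\mathbf{x}$, and read off the constraint from the orthogonal decomposition of the identity into atoms. The paper simply proves part (2) directly and deduces (1) as the case $k=1$, and is terser about verifying that the formula built from $\mathbf{x}$ satisfies (IM1); your explicit componentwise check and the treatment of faithfulness are welcome but not a different method.
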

\begin{proof} We shall actually prove the general case (2), since (1) is then an immediate consequence.
The Boolean algebra $E(S)$ has $n(1)+ \cdots + n(k)$ atoms, 
where 
$n(1)$ atoms  correspond to the atoms of  $E(I_{n(1)})$,  
$n(2)$ atoms correspond to the atoms of  $E(I_{n(2)})$, and so on. 
Any two atoms corresponding to the same $E(I_{n(i)})$ are ${\mathscr{D}}$-related, 
so that $\mu$ has the same value on all the atoms corresponding to $E(I_{n(i)})$. 
If an atom $e$ belongs to the class corresponding to $E(I_{n(i)})$, 
we put $\mu(e)=x_i$. 
Since $\mu(1)=1$ and $1$ is a join of all the atoms, we obtain that the constraint equation
$$
n(1)x_1+\cdots + n(k)x_k=1.
$$
holds.
Conversely, assume that $x_1,\dots, x_k$ are non-negative reals that satisfy the constraint equation. 
Then we automatically have $x_i\leq 1$ for each $i$, since the coefficients $n(1)\dots, n(k)$ are positive integers. 
This data gives rise to an invariant mean for $S$ by putting $\mu(e)=x_i$, where $e$ belongs to the set of atoms corresponding to $E(I_{n(i)})$. 
This is well-defined since atoms corresponding to the different Boolean algebras  $E(I_{n(i)})$ are not ${\mathscr{D}}$-related.
\end{proof}

We shall generalize the examples in Lemma~\ref{lem:gear} to a much wider, and more interesting, class using some theory developed in \cite{LS}.
Recall that a morphism between two semisimple inverse monoids is a monoid homomorphism that maps zero to zero  and preserves all non-empty finite compatible joins.
Let 
$$S_{n_{1}} \stackrel{\tau_{1}}{\rightarrow} S_{n_{2}} \stackrel{\tau_{2}}{\rightarrow} S_{n_{3}} \stackrel{\tau_{3}}{\rightarrow} \ldots $$
be a sequence of semisimple inverse monoids and injective morphisms.
Then their direct limit $S = \varinjlim S_{n_{i}}$ is a factorizable Boolean inverse monoid called, by analogy with the case of $C^{\ast}$-algebras \cite{RLL}, an {\em AF inverse monoid} \cite{LS}.
If each of the semisimple inverse monoids in this direct limit is actually a finite symmetric inverse monoid then the direct limit 
is called a {\em UHF inverse monoid}\footnote{UHF inverse monoids can be classified using supernatural numbers just as in the $C^{\ast}$-algebra case \cite{RLL}.
Thus we can define $I_{n}$ where now $n$ is a supernatural number. A concrete representation of $I_{2^{\infty}}$ is constructed in \cite{LS}.}.

\begin{remark}{\em 
Although not needed here, observe that UHF inverse monoids are $0$-simplifying.
However, as with AF $C^{\ast}$-algebras, we do not believe that all $0$-simplifying AF inverse monoids are necessarily UHF.}
\end{remark}

Let $S$ and $T$ be Boolean monoids equipped with invariant means $\alpha$ and $\beta$, respectively.
Let $\theta \colon S \rightarrow T$ be a morphism.
We say that $\theta$ is {\em compatible} with these invariant means if $\beta (\theta (e)) = \alpha (e)$ for all idempotents $e$.
If $S$ and $T$ are semisimple inverse monoids then it is enough to check that the above equation holds for all idempotent atoms $e$.

\begin{lemma}\label{lem:anja1}  Let $S= I_{m(1)}\times \cdots\times I_{m(p)}$  and $T=I_{r(1)}\times \cdots \times I_{r(q)}$ be semisimple inverse monoids.
Define $\mathbf{m} = (m(1), \ldots, m(p))$ and $\mathbf{r} = (r(1), \ldots, r(q))$.
Let $S$ be equipped with the invariant mean $\alpha$ and let $T$ be equipped with the invariant mean $\beta$.
Let  $\tau \colon S \to T$ be an injective morphism.
Suppose that 
\begin{itemize}
\item The invariant mean $\alpha$ for $S$ is encoded by the vector ${\mathbf x}=(x_1,\dots, x_p)^T$;
\item The invariant mean $\beta$ for $T$ is encoded by the vector ${\mathbf y}=(y_1,\dots, y_q)^T$;
\item $M$ is the $q\times p$ matrix which determines $\tau$.  
Thus ${\mathbf{r}}=M{\mathbf{m}}$ holds (cf. \cite[Proposition 3.6]{LS}). 
\end{itemize}
Then $\tau$ is compatible with $\alpha$ and $\beta$ if, and only if, the equality ${\mathbf x}=M^T{\mathbf y}$ holds.
\end{lemma}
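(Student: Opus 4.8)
The plan is to reduce the compatibility condition to a statement about idempotent atoms and then unwind the combinatorial meaning of the matrix $M$. Recall from the discussion preceding the statement that $\tau$ is compatible with $\alpha$ and $\beta$ precisely when $\beta(\tau(e)) = \alpha(e)$ holds for every idempotent \emph{atom} $e$ of $E(S)$; the reason is that $\beta \circ \tau$, like $\alpha$, is additive on orthogonal joins (since $\tau$ is a morphism it sends an orthogonal pair to an orthogonal pair and preserves their join, while $\beta$ is additive by (IM2)), and $E(S)$ is a finite Boolean algebra in which every element is an orthogonal join of atoms. So I would begin by fixing an arbitrary idempotent atom $e$ of $E(S)$. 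Since $E(S) = E(I_{m(1)}) \times \cdots \times E(I_{m(p)})$ is a product of finite power-set Boolean algebras, $e$ is supported in a single factor, say the $i$-th, where it is an atom of $E(I_{m(i)})$; consequently $\alpha(e) = x_i$ because $\alpha$ is encoded by $\mathbf{x}$.

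Next I would compute $\beta(\tau(e))$, and this is where \cite[Proposition 3.6]{LS} is used. The entry $M_{ji}$ of $M$ is the multiplicity with which the $i$-th block of $S$ is embedded into the $j$-th block of $T$; concretely, the composite of the inclusion of $I_{m(i)}$, of $\tau$, and of the projection onto $I_{r(j)}$ carries the atom $e$ to a partial identity on an $M_{ji}$-element set. Hence $\tau(e)$ is an orthogonal join of exactly $M_{ji}$ atoms of $E(I_{r(j)})$ as $j$ ranges over $1, \ldots, q$ (atoms from different blocks $I_{r(j)}$ being automatically orthogonal in the product $T$). Since $\beta$ is additive on orthogonal joins and takes the value $y_j$ on each atom of $E(I_{r(j)})$, this yields $\beta(\tau(e)) = \sum_{j=1}^{q} M_{ji} y_j = (M^T\mathbf{y})_i$.

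Putting the two computations together, $\beta(\tau(e)) = \alpha(e)$ for an atom $e$ supported in the $i$-th factor if and only if $(M^T\mathbf{y})_i = x_i$. Letting $i$ run over $1, \ldots, p$ and invoking the reduction to atoms from the first paragraph, $\tau$ is compatible with $\alpha$ and $\beta$ if and only if $x_i = (M^T\mathbf{y})_i$ for every $i$, that is, $\mathbf{x} = M^T\mathbf{y}$. The only step with genuine content is the second one: one must read off from \cite[Proposition 3.6]{LS} how $M$ acts on rank-one idempotents, i.e.\ that the $i$-th block's atom lands, block by block, on partial identities of sizes $M_{1i}, \ldots, M_{qi}$. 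Once that is in hand everything else is bookkeeping with the additivity of $\alpha$ and $\beta$. (The relation $\mathbf{r} = M\mathbf{m}$ recorded in the statement is part of the set-up, being forced by $\tau(1_S)=1_T$, and is not needed in the argument.)
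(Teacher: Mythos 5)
Your proposal is correct and follows essentially the same route as the paper's proof: reduce compatibility to the idempotent atoms, observe that $\alpha(e)=x_i$ for an atom in the $i$-th block while $\tau(e)$ is an orthogonal join of $M_{ji}$ atoms of $E(I_{r(j)})$ so that $\beta(\tau(e))=\sum_{j}M_{ji}y_j=(M^T\mathbf{y})_i$, and compare componentwise. You merely spell out the reduction to atoms and the orthogonality of the image atoms a little more explicitly than the paper does.
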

\begin{proof} Let $e$ be an atom of $E(S)$. 
Without loss of generality we may assume that it arises from an atom of $E(I_{n(1)})$. 
By the construction of the morphism determined by $M$, 
$\tau(e)$ is a join of $m_{11}$ atoms of $I_{r(1)}$, $m_{21}$ atoms of $I_{r(2)}$, and so on.
This means that $\beta\tau(e) = (m_{11},\dots, m_{q1})^T {\mathbf y}$.
If the equality $\beta\tau(e)=\alpha(e)$ holds for all atoms of $S$, it follows that we have the equalities
$$
x_k=(m_{1k},\dots, m_{qk})^T {\mathbf y}
$$
for all $1\leq k\leq p$. It follows that ${\mathbf x}=M^T{\mathbf y}$ holds.
The converse  is proved by reversing the arguments.
\end{proof}

The following lemma continues the above notation.

\begin{lemma}\label{lem:anja2} Let ${\mathbf x}=(x_1,\dots, x_p)^T$ and ${\mathbf y}=(y_1,\dots, y_q)^T$ be real vectors with non-negative entries such that the equality ${\mathbf x}=M^T{\mathbf y}$ holds. 
Then ${\mathbf{m}}^T{\mathbf x}=1$ if, and only if, ${\mathbf{r}}^T{\mathbf y}=1$. 
\end{lemma}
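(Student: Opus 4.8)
The plan is to observe that, under the standing hypotheses, the two quantities $\mathbf{m}^T\mathbf{x}$ and $\mathbf{r}^T\mathbf{y}$ are in fact equal as real numbers, so that the asserted equivalence is immediate. First I would recall from the set-up (cf.\ \cite[Proposition 3.6]{LS}) that the $q \times p$ matrix $M$ determining $\tau$ satisfies $\mathbf{r} = M\mathbf{m}$, and that by hypothesis $\mathbf{x} = M^T\mathbf{y}$. Then I would compute, using only associativity of matrix multiplication and the identity $(AB)^T = B^TA^T$,
\[
\mathbf{m}^T\mathbf{x} = \mathbf{m}^T\big(M^T\mathbf{y}\big) = \big(M\mathbf{m}\big)^T\mathbf{y} = \mathbf{r}^T\mathbf{y}.
\]
Since the left- and right-hand sides are equal scalars, one of them equals $1$ precisely when the other does, which is exactly the claim.

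It is worth noting in advance that the non-negativity assumption on the entries of $\mathbf{x}$ and $\mathbf{y}$ plays no role in this argument; it is stated only because in the intended application these vectors encode invariant means in the sense of Lemma~\ref{lem:gear}, where non-negativity is part of the data. Accordingly there is no genuine obstacle here: the only point requiring a little care is the bookkeeping of shapes — $M$ is $q \times p$, the columns $\mathbf{m},\mathbf{x}$ lie in $\mathbb{R}^p$ and $\mathbf{r},\mathbf{y}$ lie in $\mathbb{R}^q$ — so that every product displayed above is defined and yields a $1 \times 1$ matrix, i.e.\ a scalar.
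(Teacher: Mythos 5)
Your proof is correct and is essentially the paper's own argument: both substitute $\mathbf{x}=M^T\mathbf{y}$, transpose, and use $\mathbf{r}=M\mathbf{m}$; you merely package the chain of equivalences as a single scalar identity $\mathbf{m}^T\mathbf{x}=\mathbf{r}^T\mathbf{y}$, which is a marginally cleaner way of saying the same thing. Your side remark that non-negativity is not used is also accurate.
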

\begin{proof} ${\mathbf{m}}^T{\mathbf x}=1$ is, using the assumption, equivalent to ${\mathbf{m}}^TM^T{\mathbf y}=1$. 
Transposing this, we obtain the following equivalent equality ${\mathbf y}^TM{\mathbf{m}}=1$. 
Substituting $M{\mathbf{m}}$ with ${\mathbf{r}}$, the latter is rewritten as ${\mathbf y}^T{\mathbf{r}}=1$, which is equivalent to
${\mathbf{r}}^T{\mathbf y}=1$.
\end{proof}

We obtain the following result.

\begin{lemma}\label{lem:anja3} Let $S$ and $T$ be as above and let $\tau \colon S\to T$ be an injective morphism determined by the matrix $M$. 
Let $\beta$ be an invariant mean for $T$, and let ${\mathbf y}$ be its corresponding vector. 
Then the vector $M^T{\mathbf y}$ determines an invariant mean, $\alpha$, for $S$ and, moreover, $\tau$ is compatible with $\alpha$ and $\beta$.
\end{lemma}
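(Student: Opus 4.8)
The plan is simply to assemble the three preceding lemmas. First I would define the candidate vector $\mathbf{x} = M^T\mathbf{y}$ and record that it has non-negative entries: indeed $\mathbf{y}$ has non-negative entries because $\beta$ is an invariant mean (Lemma~\ref{lem:gear}(2)), and the matrix $M$ which determines the injective morphism $\tau$ has non-negative integer entries by the structure theory for morphisms of semisimple inverse monoids (cf. \cite[Proposition 3.6]{LS}); hence $M^T\mathbf{y}$ is again a non-negative real vector.

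Next I would verify the normalization constraint for $\mathbf{x}$. Since $\beta$ is an invariant mean on $T$, Lemma~\ref{lem:gear}(2) gives ${\mathbf{r}}^T{\mathbf{y}} = 1$. Because ${\mathbf x}=M^T{\mathbf y}$ holds by construction, Lemma~\ref{lem:anja2} applies and yields ${\mathbf{m}}^T{\mathbf x} = 1$, i.e. the constraint equation $m(1)x_1 + \cdots + m(p)x_p = 1$. Now Lemma~\ref{lem:gear}(2), applied this time to $S$, tells us that a non-negative vector $\mathbf{x}$ satisfying this constraint equation encodes an invariant mean $\alpha$ on $S$, namely the one assigning the value $x_k$ to each idempotent atom coming from the $k$-th factor $I_{m(k)}$.

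For the compatibility assertion I would invoke Lemma~\ref{lem:anja1} directly, with $\alpha$ encoded by $\mathbf{x}$ and $\beta$ encoded by $\mathbf{y}$: since the equality ${\mathbf x} = M^T{\mathbf y}$ is exactly how $\alpha$ was defined, that lemma immediately gives that $\tau$ is compatible with $\alpha$ and $\beta$. I do not anticipate a genuine obstacle here — every step is a citation of an already-established lemma, and the only point requiring a moment's care is the non-negativity of the entries of $\mathbf{x}$, which reduces to the non-negativity of the entries of the structure matrix $M$.
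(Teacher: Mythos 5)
Your proof is correct and follows essentially the same route as the paper: apply Lemma~\ref{lem:gear}(2) to get $\mathbf{r}^T\mathbf{y}=1$, use Lemma~\ref{lem:anja2} to transfer the constraint to $\mathbf{x}=M^T\mathbf{y}$, read off the invariant mean $\alpha$ via Lemma~\ref{lem:gear}(2) again, and conclude compatibility from Lemma~\ref{lem:anja1}. Your explicit check that $\mathbf{x}$ has non-negative entries (because $M$ has non-negative integer entries) is a small point the paper leaves implicit, but it is the same argument.
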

\begin{proof}  Since $\beta$ is an invariant mean,  we have that  ${\mathbf{r}}^T{\mathbf y}=1$ by part (2) of Lemma~\ref{lem:gear}.
Lemma~\ref{lem:anja2} yields that ${\mathbf{m}}^T{\mathbf x}=1$ which, again by part (2) of Lemma~\ref{lem:gear},
means that the vector ${\mathbf x}$ encodes an invariant mean, $\alpha$, for $S$. 
By Lemma~\ref{lem:anja1}, the morphism $\tau$ is compatible with $\alpha$ and $\beta$.
\end{proof}

We now have the following theorem that generalizes our constructions on semisimple inverse monoids and finite symmetric inverse monoids. 

\begin{theorem}\label{thm:saturday} \mbox{}
\begin{enumerate}

\item Every AF inverse monoid can be equipped with an invariant mean, and in fact with a faithful invariant mean.

\item Every UHF inverse monoid is equipped with a unique invariant mean, necessarily faithful.

\end{enumerate}
\end{theorem}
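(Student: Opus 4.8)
The plan is to build the invariant mean on an AF inverse monoid $S = \varinjlim S_{n_i}$ as a limit of invariant means on the finite stages, using Lemma~\ref{lem:anja3} as the engine that propagates a mean backwards along each bonding morphism. First I would fix the sequence $S_{n_1} \xrightarrow{\tau_1} S_{n_2} \xrightarrow{\tau_2} \cdots$ of semisimple inverse monoids with injective morphisms. The key observation is that Lemma~\ref{lem:anja3} runs in the ``wrong'' direction for a direct limit: given an invariant mean on $S_{n_{i+1}}$, it produces a compatible invariant mean on $S_{n_i}$. So a single mean on one stage does not suffice; instead I would construct a \emph{coherent sequence} of means $\beta_i$ on $S_{n_i}$, meaning $\tau_i$ is compatible with $\beta_i$ and $\beta_{i+1}$ for every $i$. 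Such a coherent sequence glues to a single function $\mu$ on $E(S) = \varinjlim E(S_{n_i})$: any idempotent of $S$ comes from some $E(S_{n_i})$, and compatibility guarantees the assigned value $\beta_i(e)$ is independent of the chosen representative. One then checks (IM1) and (IM2) for $\mu$ stage by stage, since any finite configuration of elements of $S$ already lives in some $S_{n_i}$, where $\beta_i$ satisfies them; and $\mu(1)=1$ because each $\beta_i$ is normalized.

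To produce the coherent sequence I would use a compactness / inverse-limit argument on the stage data. By Lemma~\ref{lem:gear}(2), the invariant means on $S_{n_i}$ form a set $K_i$ of non-negative vectors cut out by one linear equation; this is a nonempty compact convex polytope (a simplex). By Lemma~\ref{lem:anja3}, the transpose matrix $M_i^T$ of the matrix $M_i$ determining $\tau_i$ maps $K_{i+1}$ into $K_i$ (the lemma asserts exactly that $M_i^T\mathbf y \in K_i$ whenever $\mathbf y \in K_{i+1}$, and that $\tau_i$ is then compatible). So I have an inverse system of nonempty compact spaces $(K_i, M_i^T)$, and its inverse limit $\varprojlim K_i$ is nonempty. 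An element of this inverse limit is precisely a coherent sequence $(\beta_i)$, which gives the desired invariant mean on $S$. This proves part (1) for existence of \emph{some} invariant mean.

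For the faithfulness clause in (1), the mean $\mu$ is faithful iff each $\beta_i$ is faithful, i.e. iff each vector in the coherent sequence is strictly positive (by Lemma~\ref{lem:gear}(2)); for if $\mu(e)=0$ for a nonzero idempotent $e$ coming from some stage $S_{n_i}$, then $e$ dominates a nonzero atom there, which has $\beta_i$-value $0$. So I need a coherent sequence of \emph{positive} vectors. The cleanest route: since each $\tau_i$ is injective, the matrix $M_i$ has no zero columns (each atom of $S_{n_i}$ maps to a nonzero join), so $M_i^T$ carries strictly positive vectors to strictly positive vectors. Let $K_i^\circ \subseteq K_i$ be the (relatively open, hence not compact) set of strictly positive means; instead of $K_i^\circ$ I would work with the compact slabs $K_i^{(\varepsilon)} = \{\mathbf x \in K_i : x_j \ge \varepsilon \text{ for all } j\}$ for suitable small $\varepsilon>0$. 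One shows $M_i^T$ maps $K_{i+1}^{(\varepsilon)}$ into $K_i^{(\varepsilon')}$ for an $\varepsilon'$ depending only on $\varepsilon$ and the entries of $M_i$ --- indeed into $K_i^{(\delta)}$ where $\delta$ is the minimum over columns of $(\text{column sum})\cdot\varepsilon$ --- and since one may also re-normalize, a diagonal choice of $\varepsilon$'s keeps all the slabs nonempty; the inverse limit of the resulting nonempty compact system yields a coherent sequence of strictly positive vectors, hence a faithful $\mu$. The main obstacle I anticipate is precisely this bookkeeping: arranging the thresholds $\varepsilon_i$ so that the inverse system of \emph{positive} slabs stays nonempty at every stage (a naive fixed $\varepsilon$ may fail because the normalizing denominators $\mathbf m^T \mathbf x$ can shrink the positive cone); one resolves it by noting that scaling is built into the constraint and that an inverse limit of nonempty compacta with continuous bonding maps is always nonempty, so it suffices to verify each $M_i^T$ (after renormalization) sends \emph{some} positive mean to a positive mean, which is immediate from no-zero-columns.

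Finally, part (2): when $S$ is UHF, every $S_{n_i}$ is a \emph{single} finite symmetric inverse monoid $I_{n_i}$, so by Lemma~\ref{lem:gear}(1) each $K_i$ is a single point --- the unique invariant mean on $I_{n_i}$, which assigns value $1/n_i$ to each atom and is faithful. A coherent sequence of singletons is unique, so the inverse limit is a single point, giving a \emph{unique} invariant mean on $S$; and it is faithful since each stage mean is. (Consistency of the singletons, i.e. that $M_i^T$ does send the point $K_{i+1}$ to the point $K_i$, is guaranteed by Lemma~\ref{lem:anja3}, so no separate check is needed.) This completes the proof.
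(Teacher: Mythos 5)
Your treatment of the existence claim in part (1) and of part (2) is correct and follows essentially the paper's route: encode invariant means on the finite stages as vectors via Lemma~\ref{lem:gear}, pull them back along the bonding morphisms via Lemma~\ref{lem:anja3}, and glue a coherent sequence to a mean on the limit. You are in fact more careful than the paper at one point: the paper only constructs, for each $n$, a finite compatible chain $\mu_0,\dots,\mu_n$ and never explains how to pass to an infinite coherent sequence; your observation that the coherent sequences form the inverse limit $\varprojlim K_i$ of nonempty compacta with continuous bonding maps, which is automatically nonempty, is exactly the step needed to close that argument. Part (2) is also fine: the $K_i$ are singletons, every mean on $S$ restricts to a coherent sequence, and the unique stage means are the positive vectors with entries $1/n_i$.

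The faithfulness clause of part (1) is where your argument genuinely breaks down, and the gap cannot be repaired. Your slab computation gives $M_i^T\bigl(K_{i+1}^{(\varepsilon)}\bigr)\subseteq K_i^{(c_i\varepsilon)}$ where $c_i\ge 1$ is the minimal column sum of $M_i$, so the required containment $M_i^T\bigl(K_{i+1}^{(\varepsilon_{i+1})}\bigr)\subseteq K_i^{(\varepsilon_i)}$ forces $\varepsilon_i\le c_i\varepsilon_{i+1}$, hence $\varepsilon_i\ge\varepsilon_1/(c_1\cdots c_{i-1})$, while nonemptiness of $K_i^{(\varepsilon_i)}$ forces $\varepsilon_i\le 1/N_i$ with $N_i$ the number of atoms of $E(S_i)$. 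Whenever $N_i$ grows faster than $c_1\cdots c_{i-1}$ (the typical case) no choice of thresholds exists, and ``re-normalizing'' does not help because coherence pins down the whole sequence. Indeed no argument can work, because the statement itself fails: take $S_i=I_1\times I_i$ with bonding matrix $M_i=\left(\begin{smallmatrix}1&0\\1&1\end{smallmatrix}\right)$, i.e.\ the injective morphism sending $(a,b)$ to $(a,a\oplus b)$ with $a\oplus b$ the block-diagonal partial bijection. A coherent sequence satisfies $x_2^{(i)}=x_2^{(i+1)}=:c$ and $x_1^{(i+1)}=x_1^{(i)}-c$, so non-negativity at every stage forces $c=0$; the resulting AF inverse monoid therefore has a unique invariant mean (value $1$ on idempotents with nonzero $I_1$-component and $0$ otherwise), and it is not faithful. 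This is the inverse-monoid analogue of $\mathcal{K}+\mathbb{C}1$, the standard unital AF $C^{\ast}$-algebra whose unique tracial state is not faithful. Note that the paper's own proof of the faithfulness clause has the same defect: ``positive pulls back to positive'' controls each finite chain but says nothing about the limit, where the positive locus is not closed.
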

\begin{proof} (1) Let $S$ be an AF inverse monoid and let 
$$S_0\stackrel{\tau_0}{\to} S_1\stackrel{\tau_1}{\to} S_2\stackrel{\tau_2}{\to}... $$ 
be a sequence of semisimple inverse monoids and injective morphisms defining $S$. 
We  have that $S=\bigcup_{i = 0}^{\infty} S_i$. 
We claim that to prove that $S$ is equipped with an invariant mean, 
it is enough to prove that there are invariant means $\mu_i$ defined on each $S_i$
such that the embedding $S_i\stackrel{\tau_i}{\to} S_{i+1}$ is compatible with $\mu_i$ and $\mu_{i+1}$. 
The reason being, that for each idempotent $e \in S$ we can then define $\mu(e)=\mu_i(e)$ where $e \in S_i$.
This is well-defined and is obviously an invariant mean for $S$. 
To verify our claim, we actually prove the following.
For every $n\geq 1$ and {\em for every} invariant mean $\mu_n$ for $S_n$ there are  invariant means $\mu_0,\dots, 
\mu_{n-1}$ for $S_0,\dots S_{n-1}$, respectively, such that $\tau_i$ is compatible with $\mu_i$ and $\mu_{i+1}$ for every $i=0, \dots, n-1$. 
We argue by induction on $n$. 
We first consider the base of the induction, that is, the case where $n=1$.  
We have $S_0=I_1$. 
The fact that $\tau_0$ is compatible with $\mu_0$ and $\mu_1$ is expressed by the equality $\mu_1\tau_0(1)=\mu_0(1)$ 
which automatically holds by the definition of the mean and the fact that $\tau_0$ is a monoid morphism. 
It follows that any invariant mean $\mu_1$ can be chosen for $S_1$.
The inductive step, that is a passage from $n=k$ to $n=k+1$, where $k\geq 1$, 
easily follows by applying Lemma~\ref{lem:anja3} to the embedding $S_k\stackrel{\tau_k}{\to} S_{k+1}$ and the inductive assumption.
The above argument can be easily adapted to prove that any AF inverse monoid can be equipped with a faithful invariant mean. 
This is because on semisimple inverse monoids faithful invariant means are encoded via positive vectors by part (2) of Lemma~\ref{lem:gear},
and the matrix $M$ encoding an injective morphism of semisimple inverse monoids has at least one positive entry in each column, 
so that whenever the vector ${\mathbf y}$ in the proof of the inductive step above is positive, the vector ${\mathbf x}=M^T{\mathbf y}$ is positive, too.

(2) It is enough to prove uniqueness.
Let 
$$I_{n_{1}} \stackrel{\tau_{1}}{\rightarrow} I_{n_{2}} \stackrel{\tau_{2}}{\rightarrow} I_{n_{3}} \stackrel{\tau_{3}}{\rightarrow} \ldots $$
be the sequence of finite symmetric inverse monoids and their morphisms that defines $S$ and regard $S = \bigcup_{i=1}^{\infty} I_{n_{i}}$ where $I_{n_{i}} \subseteq I_{n_{i+1}}$.
Let $\mu$ and $\nu$ be invariant means on $S$.
Let $e \in E(S)$.
Then $e \in I_{n_{i}}$ for some $i$.
Restricted to $I_{n_{i}}$ both $\mu$ and $\nu$ are invariant means and so must agree by the uniqueness of
invariant means on finite symmetric inverse monoids by part (1) of Lemma~\ref{lem:gear}.
Hence $\mu (e) = \nu (e)$ and so, since $e$ was arbitrary, $\mu = \nu$.
\end{proof}

\subsection{Amenability}

Finite groups are amenable and direct limits of finite groups are amenable.
Thus the groups of units of AF inverse monoids are amenable.
Whereas the AF inverse monoids are factorizable,
we shall focus on a class of Boolean monoids that satisfy a weaker condition than factorizability but where there is still a strong connection between
the structure of the group of units and the structure of the whole inverse monoid.
A Boolean monoid $S$ is said to be {\em piecewise factorizable} if each $s \in S$
may be written in the form $s = \bigvee_{i=1}^{n} g_{i}e_{i}$ where the $g_{i}$ are units and the $e_{i}$ are idempotents.
In this section, we shall prove that if a countable Boolean monoid has an amenable group of units then it has an invariant mean.
This generalizes the existence part of Theorem~\ref{thm:saturday}.

\begin{lemma}\label{lem:nice} Let $S$ be a piecewise factorizable Boolean monoid.
Then $S$ is equipped with an invariant mean if, and only if, there is a function $\sigma \colon E(S) \rightarrow [0,1]$ such that
\begin{enumerate}
\item $\sigma (1) = 1$.
\item $\sigma (geg^{-1}) = \sigma (e)$ for all $e \in E(S)$ and $g \in \mathsf{U}(S)$.
\item $\sigma (e \vee f) = \sigma (e) + \sigma (f)$ whenever $e$ and $f$ are orthogonal.
\end{enumerate}
\end{lemma}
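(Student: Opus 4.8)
The plan is to show that the two notions literally coincide: I will prove that one may take $\mu$ and $\sigma$ to be the same function, using the standing convention that an invariant mean on a monoid is normalized at $1$. The easy direction: if $\mu$ is an invariant mean on $S$, set $\sigma = \mu$. Then (1) is the normalization convention and (3) is exactly (IM2); for (2), apply (IM1) to the element $s = ge$ with $g$ a unit and $e$ an idempotent — since $g^{-1}g = 1$ one computes $s^{-1}s = eg^{-1}ge = e$ and $ss^{-1} = geeg^{-1} = geg^{-1}$, so (IM1) reads $\sigma(geg^{-1}) = \sigma(e)$.

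For the substantive direction, given $\sigma$ satisfying (1)--(3) I claim that $\mu := \sigma$ is an invariant mean normalized at $1$. Conditions (1) and (3) give the normalization and (IM2) immediately, so the entire content is (IM1), namely $\sigma(s^{-1}s) = \sigma(ss^{-1})$ for all $s \in S$. The first step is a normal-form lemma exploiting piecewise factorizability: \emph{every $s \in S$ can be written as an orthogonal join $s = \bigvee_{j=1}^{m} g_j h_j$ with $g_j \in \mathsf{U}(S)$ and $h_j \in E(S)$.} To prove this, start from a factorization $s = \bigvee_{i=1}^{n} g_i e_i$, and rewrite this finite compatible join as an orthogonal join via the standard device that for a compatible pair $a \sim b$ one has $a \vee b = a \vee b\,\overline{\mathbf{d}(a)}$ with $a \perp b\,\overline{\mathbf{d}(a)}$; iterating yields $s = \bigvee_{j} t_j$ with the $t_j$ pairwise orthogonal and each $t_j \leq g_{i(j)} e_{i(j)}$. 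Since $\mathbf{d}(t_j) \leq \mathbf{d}(g_{i(j)}e_{i(j)}) = e_{i(j)}$, this forces $t_j = g_{i(j)}\mathbf{d}(t_j)$, so $h_j := \mathbf{d}(t_j)$ does the job.

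Granting the normal form, the verification of (IM1) is a short computation. Using that $\mathbf{d}$ and $\mathbf{r}$ preserve joins, that $(g_j h_j)^{-1}(g_j h_j) = h_j$, and that $(g_j h_j)(g_j h_j)^{-1} = g_j h_j g_j^{-1}$, one gets
$$s^{-1}s = \bigvee_{j=1}^{m} h_j, \qquad ss^{-1} = \bigvee_{j=1}^{m} g_j h_j g_j^{-1};$$
moreover orthogonality of the $g_j h_j$ passes to their domains and ranges — if $a \perp b$ then $\mathbf{d}(a) \perp \mathbf{d}(b)$ and $\mathbf{r}(a) \perp \mathbf{r}(b)$, since $a^{-1}(ab^{-1})b = \mathbf{d}(a)\mathbf{d}(b)$ and $a(a^{-1}b)b^{-1} = \mathbf{r}(a)\mathbf{r}(b)$ — so both displayed joins are orthogonal. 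Applying (3) repeatedly and then (2) termwise gives
$$\sigma(s^{-1}s) = \sum_{j=1}^{m}\sigma(h_j) = \sum_{j=1}^{m}\sigma(g_j h_j g_j^{-1}) = \sigma(ss^{-1}),$$
which is (IM1).

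I expect the only real obstacle to be the normal-form lemma. The orthogonalization of a finite compatible join is routine inverse-semigroup bookkeeping, but a little care is needed to keep each new orthogonal summand beneath a term of the shape $g_i e_i$: that is exactly what preserves the "units on the left" form and makes the final termwise application of (2) legitimate. Everything else — the easy direction, and the computation with $\mathbf{d}$, $\mathbf{r}$, and orthogonal joins — is immediate from the definitions and from the basic facts recorded earlier in the paper.
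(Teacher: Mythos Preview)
Your proof is correct and follows essentially the same approach as the paper: both directions match, and in the substantive direction you argue, as the paper does, that $s$ may be written as an \emph{orthogonal} join $\bigvee g_j h_j$ and then compute $\sigma(s^{-1}s)=\sum\sigma(h_j)=\sum\sigma(g_jh_jg_j^{-1})=\sigma(ss^{-1})$. The only difference is that the paper dispatches the orthogonalization with a one-line ``without loss of generality'', whereas you spell out the normal-form lemma explicitly; your added justification is sound and is exactly what underlies the paper's WLOG.
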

\begin{proof} Suppose that $S$ is equipped with an invariant mean $\mu$.
Let $g$ be an arbitrary unit and $e$ and arbitrary idempotent.
Put $s = ge$.
Then by definition $\mu (s^{-1}s) = \mu (ss^{-1})$.
But $s^{-1}s = e$ and $ss^{-1} = geg^{-1}$.
It follows that conditions (1), (2) and (3) are satisfied.
Suppose now that we have a function $\sigma \colon E(S) \rightarrow [0,1]$ satisfying conditions (1), (2) and (3).
Let $s \in S$ be arbitrary.
Then $s = \bigvee_{i=1}^{n} s_{i}$ where $s_{i} = g_{i}e_{i}$  and the $g_{i}$ are units and the $e_{i}$ are idempotents.
Without loss of generality, we may assume that the union $s = \bigvee_{i=1}^{n} s_{i}$ is disjoint.
Thus $\mathbf{d}(s) =  \bigvee_{i=1}^{n} e_{i}$ is a disjoint union.
Likewise  $\mathbf{r}(s) = \bigvee_{i=1}^{n} \mathbf{r}(s_{i}) = \bigvee_{i=1}^{n} g_{i}e_{i}g_{i}^{-1}$ is a disjoint union.
We may apply properties (2) and (3) to deduce that $\sigma (s^{-1}s) = \sigma (ss^{-1})$.
It follows that $\sigma$ is an invariant mean.
\end{proof}

The above result is important because it tells us that as far as piecewise factorizable Boolean monoids $S$ are concerned
it is the action by conjugation of the group of units $\mathsf{U}(S)$ on the Boolean algebra $E(S)$ that is important.

\begin{proposition}\label{prop:amenability_one} Let $S$ be a countable Boolean monoid which is piecewise factorizable.
If the group of units of $S$ is amenable then $S$ is equipped with an invariant mean.
\end{proposition}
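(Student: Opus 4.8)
The strategy is to reduce, via Lemma~\ref{lem:nice}, to constructing a single conjugation-invariant finitely additive probability measure on $E(S)$, and then to obtain such a measure by the classical averaging trick applied to the amenable discrete group $G := \mathsf{U}(S)$. Indeed, since $S$ is piecewise factorizable, Lemma~\ref{lem:nice} reduces the problem to producing a function $\sigma \colon E(S) \to [0,1]$ such that (1) $\sigma(1) = 1$; (2) $\sigma(geg^{-1}) = \sigma(e)$ for all $e \in E(S)$ and all $g \in G$; and (3) $\sigma(e \vee f) = \sigma(e) + \sigma(f)$ whenever $e \perp f$. (We may assume $S$ is non-trivial, i.e.\ $1 \neq 0$, the one-element monoid being an uninteresting exception to the normalization convention.)

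First I would record the elementary fact that for each $g \in G$ the map $\gamma_g \colon E(S) \to E(S)$ given by $\gamma_g(e) = g^{-1}eg$ is an automorphism of the Boolean algebra $E(S)$: it carries idempotents to idempotents, it is a bijection with inverse $e \mapsto geg^{-1}$, and from $\gamma_g(e)\gamma_g(f) = g^{-1}efg$ together with $\gamma_g(0) = 0$ and $\gamma_g(1) = 1$ it follows that $\gamma_g$ is a $0$- and $1$-preserving lattice isomorphism, hence preserves complements and the orthogonality relation $\perp$. Next, fix any finitely additive probability measure $\sigma_0$ on $E(S)$; for instance, choose an ultrafilter $\mathcal U$ on the non-trivial Boolean algebra $E(S)$ and let $\sigma_0$ be its $\{0,1\}$-valued indicator, so that $\sigma_0(1) = 1$ and $\sigma_0(e \vee f) = \sigma_0(e) + \sigma_0(f)$ whenever $e \perp f$.

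Now comes the one substantive step. Since $G$ is amenable, fix a left-invariant mean $m$ on $\ell^\infty(G)$. For $e \in E(S)$, the function $F_e \colon G \to [0,1]$ given by $F_e(g) = \sigma_0(g^{-1}eg)$ is bounded, hence lies in $\ell^\infty(G)$, and we set $\sigma(e) := m(F_e) \in [0,1]$. Then $F_1$ is the constant function $1$, so $\sigma(1) = m(\mathbf{1}) = 1$. If $e \perp f$, then $\gamma_g(e) \perp \gamma_g(f)$ and $\gamma_g(e \vee f) = \gamma_g(e) \vee \gamma_g(f)$ for every $g$, so $F_{e \vee f} = F_e + F_f$ pointwise and $\sigma(e \vee f) = \sigma(e) + \sigma(f)$ by linearity of $m$. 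Finally, for $h \in G$ the identity $g^{-1}(heh^{-1})g = (h^{-1}g)^{-1}e(h^{-1}g)$ shows that $F_{heh^{-1}}(g) = F_e(h^{-1}g)$, i.e.\ $F_{heh^{-1}}$ is a left translate of $F_e$; left-invariance of $m$ then gives $\sigma(heh^{-1}) = \sigma(e)$. Thus $\sigma$ satisfies (1), (2) and (3), and Lemma~\ref{lem:nice} supplies the required invariant mean on $S$.

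I do not anticipate a genuine obstacle here, the core being the classical averaging of a state over an amenable group. The two points that need care are: (i) verifying that conjugation by a unit acts by \emph{Boolean-algebra} automorphisms, so that orthogonality and orthogonal joins are preserved and additivity therefore survives the averaging; and (ii) the left-versus-right translation bookkeeping when the invariant mean is applied, which is handled cleanly by the parametrization $F_e(g) = \sigma_0(g^{-1}eg)$ (alternatively, by taking $m$ two-sidedly invariant, which is available since $G$ is amenable). I note in passing that countability of $S$ does not appear to be needed for this approach and presumably reflects the standing conventions of the paper.
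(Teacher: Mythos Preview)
Your proof is correct and takes a genuinely different route from the paper's. Both arguments begin by invoking Lemma~\ref{lem:nice} to reduce the problem to producing a finitely additive, conjugation-invariant probability on $E(S)$, but they diverge from there. The paper passes to the Stone space $\mathsf{X}(E(S))$, observes that conjugation by units acts by homeomorphisms, and then invokes Bogolyubov's characterization of amenability for countable groups (every continuous action on a compact metrizable space admits an invariant Borel probability measure) to obtain an invariant probability $\nu$; the desired $\sigma$ is then $\sigma(e) = \nu(U_e)$. Your approach instead fixes an arbitrary finitely additive probability $\sigma_0$ on $E(S)$ (the indicator of an ultrafilter) and averages it directly against a left-invariant mean on $\ell^\infty(G)$. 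Your argument is more elementary---no Stone duality, no topological measure theory---and, as you correctly observe, does not use countability at all; in the paper's proof countability enters only to guarantee metrizability of the Stone space so that Bogolyubov applies. The paper's route, on the other hand, fits the non-commutative Stone duality theme of the work and actually produces a countably additive invariant measure on the Stone space, which may be of independent interest.
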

\begin{proof} It is a theorem of Bogolyubov, that a countable group is amenable if, and only if, for any continuous action 
of the group on a metrizable, compact space $X$ there exists a probability measure on $X$ which is invariant under the group action.
We refer the reader to \cite{GH} for references to this result and some background.
The action of $\mathsf{U}(S)$ on $E(S)$ by conjugation induces an action  by homeomorphisms on the Stone space $\mathsf{X}(E(S))$:
for each ultrafilter $F \subseteq E(S)$ define $g \cdot F = \{geg^{-1} \colon e \in F\}$.
It is easy to check that  $\mathsf{U}(S)$ acts by homeomorphisms on  $\mathsf{X}(E(S))$ 
and so the action $\mathsf{U}(S) \times \mathsf{X}(E(S)) \rightarrow  \mathsf{X}(E(S))$ is continuous.
The Stone space  $\mathsf{X}(E(S))$ is compact and it is countable so it is metrizable \cite[pp. 103--104]{K}.
It therefore follows by  Bogolyubov's theorem that there is a probability measure $\nu$ on $\mathsf{X}(E(S))$
which is invariant under the group action.
If $U_{e} = \{F \in \mathsf{X}(E(S)) \colon e \in F \}$ define $\mu (e) = \nu (U_{e})$.
We may now check that $\mu \colon E(S) \rightarrow [0,1]$ satisfies the conditions of Lemma~\ref{lem:nice}
and so there is an invariant mean on $S$.
\end{proof}

\begin{remark}{\em It is interesting to ask under what circumstances the converse to the above theorem is true.
For our purposes, it is enough to focus on the following situation.
Let $G$ be a countable discrete group acting faithfully by homeomorphisms on the Stone space $X$ of a countable Boolean algebra.
We want to know the circumstances under which the existence of an invariant mean {\em defined on the clopen subsets of $X$}
leads to the conclusion that $G$ is amenable.
In the case where $X$ has the discrete topology, then it is known \cite[Proposition~3.5]{Rosenblatt} that a sufficient condition
for $G$ to be amenable is that the stabilizer of each point of $X$ is amenable.
On the other hand, there are nice actions of free groups, famously non-amenable, which have invariant means \cite{vD}.
It is important to remember that when a group $G$ acts on a set $X$
the term {\em invariant mean} refers to maps defined on the power set of $X$.
However, see \cite[Theorem 10.8]{Wagon}, the invariant extension theorem.
The papers \cite{JM,JNS} suggest the delicate analyses that may be necessary to resolve this question.}
\end{remark}

\section{The existence of invariant means}

In this section, we shall determine necessary and sufficient conditions on a Boolean monoid in order that it have an invariant mean.
We shall do this by constructing from a Boolean monoid a commutative monoid called its {\em type monoid}.
We begin with some terminology.
Let $M$ be a commutative monoid with addition $+$ and identity $0$.
Define $a \leq b$ if, and only if, $b = a + c$ for some $c$.
We call this the {\em algebraic preorder} of the commutative monoid.
If $a \in M$ we abbreviate $\overbrace{a + \ldots + a}^{n}$ by $na$.
We say that $u$ is an {\em order-unit} if for each $a \in M$ there exists $n \geq 1$ such that $a \leq nu$.
In what follows, we shall regard the set $ [0,\infty)$ as a monoid under addition with $1$ as an order unit.
We shall be interested in monoid homomorphisms, which we shall call simply {\em morphisms}, from commutative monoids $M$ to the commutative monoid $[0,\infty)$.
If $u \in M$ is an order unit, then we shall be interested in morphisms that map $u$ to $1$.
That is,  distinguished order units are mapped one to the other.
We shall say that such morphisms are {\em normalized at $u$}.
The key theorem we shall prove in this section is the following.

\begin{theorem}[The type monoid]\label{thm:type-monoid} Let $S$ be a Boolean monoid.
\begin{enumerate}

\item There is a commutative monoid $\mathsf{T}(S)$, called the {\em type monoid} of $S$,
equipped with a map $\delta \colon E(S) \rightarrow \mathsf{T}(S)$ such that the following properties hold.
\begin{enumerate}
\item $\delta (0)$ is the identity.
\item  $\delta (1)$ is an order-unit.
\item  If $e,f \in E(S)$ are othogonal then $\delta (e \vee f) = \delta (e) + \delta (f)$. 
\item If $e \, \mathscr{D} \, f$ then $\delta (e) = \delta (f)$.
\end{enumerate}

\item There is a bijective correspondence between the set of morphisms from  $\mathsf{T}(S)$ to $[0,\infty)$ normalized at $\delta (1)$ 
and the set of invariant means on the Boolean monoid $S$.
\end{enumerate}
\end{theorem}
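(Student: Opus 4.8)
The plan is to construct $\mathsf{T}(S)$ explicitly as a quotient of a free commutative monoid and then verify the universal property. First I would build the type monoid. Consider the infinite-matrix-like construction: look at the Boolean inverse monoid $S_\infty = \bigvee_{n} M_n(S)$ obtained by stabilizing, so that idempotents of $S_\infty$ correspond to finite orthogonal ``tuples'' of idempotents from copies of $S$; more elementarily, take the free commutative monoid $\mathbb{N}\langle E(S)\rangle$ on the set $E(S)$ and impose the congruence generated by the relations $[e\vee f] = [e] + [f]$ for $e\perp f$, $[0]=0$, and $[e]=[f]$ whenever $e\,\mathscr{D}\,f$. Define $\mathsf{T}(S)$ to be this quotient and $\delta\colon E(S)\to\mathsf{T}(S)$ the composite $e\mapsto [e]$. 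Properties (a), (c), (d) of part (1) hold by construction. For (b), that $\delta(1)$ is an order-unit: every generator $\delta(e)$ satisfies $\delta(e)\le\delta(1)$ since $1 = e\vee\bar e$ is an orthogonal join, hence $\delta(1) = \delta(e)+\delta(\bar e)$; a general element of $\mathsf{T}(S)$ is a finite sum $\delta(e_1)+\cdots+\delta(e_k)$, which is then $\le k\,\delta(1)$.

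Next I would establish part (2), the bijection between morphisms $\phi\colon\mathsf{T}(S)\to[0,\infty)$ normalized at $\delta(1)$ and invariant means $\mu$ on $S$. Given such a $\phi$, set $\mu = \phi\circ\delta\colon E(S)\to[0,\infty)$. Then $\mu(1)=\phi(\delta(1))=1$, so $\mu$ is normalized; (IM2) follows from property (c) and additivity of $\phi$; and (IM1) holds because for $s\in S$ we have $s^{-1}s\,\mathscr{D}\,ss^{-1}$, so $\delta(s^{-1}s)=\delta(ss^{-1})$ by (d), whence $\mu(s^{-1}s)=\mu(ss^{-1})$. Thus $\mu$ is an invariant mean. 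Conversely, given an invariant mean $\mu$, I must produce a morphism $\phi\colon\mathsf{T}(S)\to[0,\infty)$ with $\phi\circ\delta=\mu$. By freeness, $\mu$ extends uniquely to a monoid homomorphism $\tilde\mu\colon\mathbb{N}\langle E(S)\rangle\to[0,\infty)$; the point is that $\tilde\mu$ kills the defining congruence. For the generator relations this is exactly conditions (IM1) (via $\mathscr{D}$, using the arrow notation $e\,\mathscr{D}\,f$ and the observation from the introduction that an arrow is a $\mathscr{D}$-relation) and (IM2) together with Lemma~\ref{lem:thursday}(1). So $\tilde\mu$ factors through $\mathsf{T}(S)$, giving $\phi$ with $\phi(\delta(1))=\mu(1)=1$. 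The two assignments $\phi\mapsto\phi\circ\delta$ and $\mu\mapsto\phi$ are mutually inverse because $\delta(E(S))$ generates $\mathsf{T}(S)$ as a monoid, so a morphism out of $\mathsf{T}(S)$ is determined by its values on $\delta(E(S))$.

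The main obstacle I anticipate is making precise that $\mathscr{D}$ on idempotents is the ``right'' relation to quotient by, and in particular checking that the congruence generated by the three families of relations is well-behaved enough that $\delta$ still separates enough information — equivalently, verifying that the relations $[e]=[f]$ for $e\,\mathscr{D}\,f$ interact correctly with orthogonal joins (e.g.\ that if $e = e_1\vee e_2$ orthogonally and $e\,\mathscr{D}\,f$ then $f$ decomposes compatibly, via the element $s$ with $\mathbf{d}(s)=e$, $\mathbf{r}(s)=f$, restricting to $se_1$ and $se_2$). This amounts to checking that the congruence is generated in a ``confluent'' way so that every element of $\mathsf{T}(S)$ is represented by an orthogonal sum of $\delta$-images, which is what is needed to run the order-unit argument cleanly and to see that additivity of $\tilde\mu$ descends. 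Once that bookkeeping is in place the rest is the routine universal-property verification sketched above. I would present the construction abstractly via the congruence, relegating the confluence/refinement lemma to a short separate argument.
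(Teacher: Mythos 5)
Your proposal is correct, but it takes a genuinely different route from the paper. You build $\mathsf{T}(S)$ by generators and relations: the free commutative monoid on $E(S)$ modulo the congruence generated by $[0]=0$, $[e\vee f]=[e]+[f]$ for $e\perp f$, and $[e]=[f]$ for $e\,\mathscr{D}\,f$. With that definition, part (1) is immediate and part (2) is the universal property of a presentation: an invariant mean kills exactly the generating relations (via (IM2), Lemma~\ref{lem:thursday}(1), and (IM1)), so it factors through the quotient, and conversely composition with $\delta$ recovers a mean. Note that the ``confluence'' worry you flag at the end is not actually needed for the theorem as stated: descent of $\tilde\mu$ through the quotient requires only that the generating relations be killed, and the order-unit argument requires only that every element be \emph{some} finite sum of generator images, which holds in any quotient of the free monoid. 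The paper instead realizes $\mathsf{T}(S)$ concretely as $\mathsf{E}(M_{\omega}(S))=E(M_{\omega}(S))/\mathscr{D}$ for the Boolean inverse semigroup $M_{\omega}(S)$ of $\omega\times\omega$ rook matrices: Proposition~\ref{prop:cakes} and Lemma~\ref{lem:butterfly} show this is an honest commutative monoid because $M_{\omega}(S)$ is orthogonally separating, and part (2) follows from the computation that an invariant mean on $S$ extends to one on $M_{\omega}(S)$ normalized at $\Delta_{\omega}(1)$. What each approach buys: yours makes Theorem~\ref{thm:type-monoid} itself essentially formal and more elementary; the paper's matrix model is what powers everything downstream --- Lemma~\ref{lem:dinky} translates $(n+1)\mathbf{u}\leq n\mathbf{u}$ directly into the existence of a Tarski matrix, and Lemma~\ref{lem:dharma} exploits the matrix realization to get the Kuratowski property and cancellation. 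If you used your presented monoid, you would have to prove it coincides with (or at least has the same order-unit inequalities as) the matrix model, and that identification is precisely the refinement/confluence analysis you deferred; so the real work has been postponed rather than avoided if one wants the full Tarski alternatives.
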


The rationale for the above theorem is the following proved as \cite[Theorem 9.1]{Wagon}.

\begin{theorem}[Tarski's theorem]\label{them:wagon} Let $T$ be a commutative monoid with distinguished order-unit $u$.
Then $T$ admits a morphism to $[0,\infty)$ normalized at $u$ if, and only if, for all natural numbers $n \geq 0$,
we have that  $$(n+1)u \nleq nu.$$ 
\end{theorem}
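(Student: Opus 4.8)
The proof I propose falls into the two independent halves suggested by the "if and only if".

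For the forward direction, assume $T$ admits a morphism $\phi \colon T \to [0,\infty)$ normalized at $u$, so $\phi(u) = 1$. Suppose for contradiction that $(n+1)u \leq nu$ for some $n \geq 0$; this means $nu = (n+1)u + c$ for some $c \in T$. Applying $\phi$ and using that $\phi$ is a monoid homomorphism gives $n = (n+1) + \phi(c)$, whence $\phi(c) = -1 < 0$, contradicting that $\phi$ takes values in $[0,\infty)$. So $(n+1)u \nleq nu$ for all $n$. This direction is routine.

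The converse — from the cancellation-type hypothesis $(n+1)u \nleq nu$ for all $n$ to the existence of a normalized morphism to $[0,\infty)$ — is the substantive content, and here I would not reprove it from scratch: the excerpt explicitly flags this as "proved as \cite[Theorem 9.1]{Wagon}", so the plan is to cite Wagon's treatment (which is itself a packaging of Tarski's original argument). The standard route, should a self-contained argument be wanted, goes through the algebraic preorder on $T$ and a Hahn–Banach / cone-separation style construction: one forms the abstract "state space" of $T$, i.e. normalized additive functionals, and shows it is nonempty exactly when $u$ is not "paradoxical" in the sense that $(n+1)u \nleq nu$. Concretely, one embeds $T$ (after quotienting by the congruence $a \approx b \iff a + nu \leq b + mu$ and $b + m'u \leq a + n'u$ appropriately, or by passing to the Grothendieck-type construction relative to $u$) into an ordered group with order unit, observes the hypothesis guarantees $u$ is an order unit whose powers are unbounded below in the right sense, and applies the Goodearl–Handelman / Effros–Handelman–Shen type result that an ordered abelian group with order unit admits a state, pulling it back to a morphism $T \to [0,\infty)$ with $u \mapsto 1$.

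The main obstacle in a from-scratch proof is exactly the positivity: one must produce a functional that is simultaneously additive, nonnegative, and sends $u$ to $1$, and nonnegativity is where the hypothesis $(n+1)u \nleq nu$ must be consumed — it is what prevents the cone generated by $T$ in the relevant vector space from containing $-u$ (equivalently, what keeps $u$ off the "wrong side" and lets a separating functional be chosen positive). Since all of this is exactly \cite[Theorem 9.1]{Wagon}, the proof in this paper is simply: \emph{This is \cite[Theorem 9.1]{Wagon}; alternatively the forward implication is the elementary computation above and the converse is Tarski's theorem in the form cited there.}
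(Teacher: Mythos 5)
Your proposal is correct and matches the paper's treatment: the paper offers no proof of this theorem at all, simply attributing it to \cite[Theorem 9.1]{Wagon}, which is exactly what you do for the substantive converse direction. Your explicit verification of the easy forward implication (that a normalized morphism forces $(n+1)u \nleq nu$, since $nu = (n+1)u + c$ would give $\phi(c) = -1$) is a small correct addition beyond what the paper records.
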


By using the internal description of the type monoid of a Boolean monoid $S$, 
we may use Tarski's theorem to deduce algebraic necessary and sufficient conditions on $S$ 
in order that it possess an invariant mean.

Although we are primarily interested in the case of Boolean inverse {\em monoids} there are, as we shall see, good reasons to describe
the construction in the slightly more general setting of Boolean inverse {\em semigroups}.
Let $S$ be a Boolean semigroup.
Put $\mathsf{E}(S) = E(S)/\mathscr{D}$.
Denote the $\mathscr{D}$-class containing the idempotent $e$ by $[e]$.
Define 
$[e] \oplus [f]$ as follows.
Suppose that we can find idempotents $e' \in [e]$ and $f' \in [f]$ such that $e'$ and $f'$ are orthogonal.
Then define $[e] \oplus [f] = [e' \vee f']$.
Otherwise, the operation $\oplus$ is undefined.
We write $\exists [e] \oplus [f]$ to mean that $[e] \oplus [f]$ is defined.
It is convenient to put $\mathbf{0} = [0]$ and, if $S$ is actually a monoid, to put $\mathbf{1} = [1]$.
The proofs of the following were given in \cite{LS}, but we repeat them here for the sake of completeness.
We shall also need the following definition.
A Boolean semigroup $S$ is said to be {\em orthogonally separating} if for all $e,f \in E(S)$
there exist orthogonal idempotents $e'$ and  $f'$ such that $e \, \mathscr{D} \, e'$ and $f \, \mathscr{D} \, f'$.
The following is based on a construction first sketched in \cite{Renault} and developed  and proved in  \cite{LS}.
We nevertheless give full proofs for the sake of completeness.

\begin{proposition}\label{prop:cakes} Let $S$ be a Boolean semigroup.
\begin{enumerate}

\item The partial operation $\oplus$ is well-defined.

\item $\exists [e] \oplus [f]$ if, and only if, $\exists [f] \oplus [e]$, and they are equal.

\item $\exists ([e] \oplus [f]) \oplus [g]$ if, and only if, $\exists [e] \oplus ([f] \oplus [g])$, and they are equal.

\item $[0] \oplus [e]$ always exists and equals $[e]$.

\item $[e] \leq [f]$ if, and only if, $e \, \mathscr{D}\, i \leq f$ for some idempotent $i$.
That is, if, and only if, $e \leq_{J} f$.

\item If $S$ is a monoid, then for each element $[e] \in \mathsf{E}(S)$ we have that $[e] \leq \mathbf{1}$.
It follows that $\mathbf{1}$ is a top.

\item The algebraic preorder on $\mathsf{E}(S)$ is an order if, and only if, $\mathscr{D} = \mathscr{J}$ in $S$.

\item The operation $\oplus$ is everywhere defined if, and only if, $S$ is orthogonally separating.

\end{enumerate}
\end{proposition}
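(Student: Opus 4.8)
The plan is to prove all eight statements by direct manipulation of idempotents, using the calculus of $\mathbf{d}$ and $\mathbf{r}$, the natural partial order, and the Boolean operations on $E(S)$. Throughout it helps to keep the Wagner--Preston picture in mind --- an arrow $e\stackrel{a}{\longrightarrow}f$ behaves like a partial bijection carrying the ``set'' $e$ onto the ``set'' $f$ --- but every assertion will be verified algebraically, with distributivity of the Boolean semigroup doing the real work.

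For (1), suppose $e\,\mathscr{D}\,e_i$ and $f\,\mathscr{D}\,f_i$ with $e_i\perp f_i$ for $i=1,2$, and choose $a,b$ with $e_1\stackrel{a}{\longrightarrow}e_2$ and $f_1\stackrel{b}{\longrightarrow}f_2$. The key computation is that $a\perp b$: since $a=a\mathbf{r}(a)$ and $b=\mathbf{r}(b)b$ one gets $ab^{-1}=a\,\mathbf{d}(a)\mathbf{d}(b)\,b^{-1}=a(e_1f_1)b^{-1}=0$, and symmetrically $a^{-1}b=a^{-1}(e_2f_2)b=0$. Hence $a\vee b$ exists; because inversion is an order-isomorphism of $(S,\leq)$ and multiplication distributes over compatible joins, $\mathbf{d}(a\vee b)=a^{-1}a\vee a^{-1}b\vee b^{-1}a\vee b^{-1}b=e_1\vee f_1$ and likewise $\mathbf{r}(a\vee b)=e_2\vee f_2$, so $e_1\vee f_1\,\mathscr{D}\,e_2\vee f_2$ and $\oplus$ is well-defined. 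Part (2) is immediate from the symmetry of $\perp$ and the commutativity of $\vee$; part (4) is immediate since $0\,\mathscr{D}\,0$, $0\perp e$ and $0\vee e=e$.

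For (3) the central device is that a $\mathscr{D}$-arrow can be pushed through an orthogonal join: if $e'\perp f'$ and $e'\vee f'\stackrel{a}{\longrightarrow}h$, then $h_1:=ae'a^{-1}$ and $h_2:=af'a^{-1}$ satisfy $e'\,\mathscr{D}\,h_1$, $f'\,\mathscr{D}\,h_2$ (via $ae'$ and $af'$), $h_1\perp h_2$ (since $h_1h_2=a(e'f')a^{-1}=0$), and $h_1\vee h_2=a(e'\vee f')a^{-1}=\mathbf{r}(a)=h$. Given $\exists\,([e]\oplus[f])\oplus[g]$, realise $[e]\oplus[f]$ by orthogonal representatives $e'\perp f'$ and then realise $[e'\vee f']\oplus[g]$ by some $h\,\mathscr{D}\,e'\vee f'$ with $h\perp g'$; splitting $h$ as above, $h_2\le h\perp g'$ shows $[f]\oplus[g]$ is defined, and $h_1\le h\perp g'$ together with $h_1\perp h_2$ gives $h_1\perp(h_2\vee g')$, whence $[e]\oplus([f]\oplus[g])=[h_1\vee h_2\vee g']=[h\vee g']=([e]\oplus[f])\oplus[g]$. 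The reverse implication follows by combining this with (2). Parts (5)--(8) are then short. For (5): if $[f]=[e]\oplus[g]$ with witnesses $e'\perp g'$ and $f\,\mathscr{D}\,e'\vee g'$, pushing the arrow through yields $e\,\mathscr{D}\,e'\,\mathscr{D}\,ae'a^{-1}\le f$; conversely $e\,\mathscr{D}\,i\le f$ gives the orthogonal join $f=i\vee f\bar{i}$, so $[f]=[i]\oplus[f\bar i]=[e]\oplus[f\bar i]$ --- and $e\,\mathscr{D}\,i\le f$ for some $i$ is exactly $e\leq_J f$ in the sense of the introduction. Part (6) follows since $e\,\mathscr{D}\,e\le 1$, so $[e]\le\mathbf{1}$ for all $e$. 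For (7), the algebraic preorder is always reflexive and transitive, hence a partial order iff antisymmetric; by (5), $[e]\le[f]$ and $[f]\le[e]$ together say $e\,\mathscr{J}\,f$ while $[e]=[f]$ says $e\,\mathscr{D}\,f$, so antisymmetry holds for all pairs iff $\mathscr{J}\subseteq\mathscr{D}$, i.e.\ (as $\mathscr{D}\subseteq\mathscr{J}$ always) iff $\mathscr{D}=\mathscr{J}$. Part (8) is a restatement of the definition of orthogonally separating: $[e]\oplus[f]$ is defined precisely when $e$ and $f$ admit orthogonal $\mathscr{D}$-representatives.

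The main obstacle is the bookkeeping in (1) and (3): one has to make sure the various $\mathscr{D}$-arrows can be chosen so that the relevant idempotents become genuinely orthogonal (not merely compatible), and that joins are respected by multiplication and inversion --- this is exactly where the hypothesis that $S$ is a \emph{Boolean} (hence distributive) semigroup is used. Once the ``push an arrow through an orthogonal join'' observation is isolated as a lemma, the rest is routine.
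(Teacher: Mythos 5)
Your proposal is correct and follows essentially the same route as the paper: well-definedness via joining the two orthogonal arrows $a\vee b$, and associativity via restricting the arrow out of $e'\vee f'$ to $e'$ and $f'$ separately (your ``push an arrow through an orthogonal join'' lemma is exactly the paper's $x=ce'$, $y=cf'$ construction). The only cosmetic differences are that you deduce (6) from (5) rather than from the orthogonal join $e\vee\bar e=1$, and you spell out the orthogonality computations that the paper leaves implicit.
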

\begin{proof} (1) Let 
$e' \, \mathscr{D} \, e''$
and 
$f' \, \mathscr{D} \, f''$
where $e'$ is orthogonal to $f'$, and $e''$ is orthogonal to $f''$.
We prove that $e' \vee f' \, \mathscr{D} \, e'' \vee f''$.
By assumption, there are elements $e' \stackrel{a}{\longrightarrow} e''$ and $f' \stackrel{b}{\longrightarrow} f''$.
The elements $a$ and $b$ are orthogonal and so $a \vee b$ exists.
But $e' \vee f' \stackrel{a \vee b}{\longrightarrow} e'' \vee f''$. 

(2) Immediate.

(3) Suppose that 
$\exists ([e] \oplus [f]) \oplus [g].$ 
Then 
$\exists [e] \oplus [f]$
and so we may find
$e \stackrel{a}{\longrightarrow} e'$ and $f \stackrel{b}{\longrightarrow} f'$
such that $e'$ and $f'$ are orthogonal.
By definition, $[e] \oplus [f] = [e' \vee f']$.
Since $\exists [e' \vee f'] \oplus [g]$,
we may find $e' \vee f' \stackrel{c}{\longrightarrow} i$
and
$g \stackrel{d}{\longrightarrow} g'$
such that $i$ and $g'$ are orthogonal.
It follows that 
$$([e] \oplus [f]) \oplus [g] = [i \vee g'].$$
Define $x = ce'$ and $y = cf'$.
Then
$$e' \stackrel{x}{\longrightarrow} \mathbf{r}(x)
\mbox{ and }
f' \stackrel{y}{\longrightarrow} \mathbf{r}(y).$$
Since $i$ is orthogonal to $g'$ and $\mathbf{r}(y) \leq i$,
we have that $\mathbf{r}(y)$ and $g'$ are orthogonal.
In addition, $yb$ has domain $f$ and range $\mathbf{r}(y)$.
It follows that $\exists [f] \oplus [g]$ and it is equal to $[\mathbf{r}(y) \vee g']$.
Observe next that $\mathbf{r}(x)$ is orthogonal to $\mathbf{r}(y)$ and, since $\mathbf{r}(x) \leq i$ it is also orthogonal to $g'$.
It follows that $\mathbf{r}(x)$ is orthogonal to $\mathbf{r}(y) \vee g'$.
But $xa$ has domain $e$ and range $\mathbf{r}(x)$.
It follows that
$\exists [e] \oplus [\mathbf{r}(y) \vee g']$ is defined
and equals 
$[\mathbf{r}(x) \vee \mathbf{r}(y) \vee g']$.
But $\mathbf{r}(x) \vee \mathbf{r}(y) = i$.
It follows that we have shown
$$\exists [e] \oplus ([f] \oplus [g])$$
and that it equals
$([e] \oplus [f]) \oplus [g]$.
The reverse implication follows by symmetry.


(4) Immediate.

(5) Suppose that 
$e \stackrel{x}{\longrightarrow} i \leq f$.
We may find an idempotent $j$ such that $f = i \vee j$ and $i \wedge j = 0$.
Then $[e] \oplus [j] = [f]$ and so $[e] \leq [f]$.
Conversely, 
suppose that $[e] \leq [f]$ where $e$ and $f$ are idempotents.
Then there exists an idempotent $g$ such that $[e] \oplus [g] = [f]$.
By definition, there are elements $e \stackrel{a}{\longrightarrow} e'$ and $g \stackrel{b}{\longrightarrow} g'$
such that $e' \vee f' \, \mathscr{D} \, f$.
But then $e \, \mathscr{D} \, e' \leq f$, as required.


(6) Denote by $\bar{e}$ the complement of $e$ in the Boolean algebra of idempotents of $S$.
Then $e \vee \bar{e} = 1$ is an orthogonal join.
It follows that $\exists [e] \oplus [\bar{e}]$ and $[e] \oplus [\bar{e}] = \mathbf{1}$.
Thus $[e] \leq \mathbf{1}$.

(7) Suppose that $\mathscr{D} = \mathscr{J}$.
If $[e] \leq [f]$ and $[f] \leq [e]$ then $e \leq_{J} f$ and $f \leq_{J} e$ and so $e\,  \mathscr{J} \, f$.
By assumption, $e \, \mathscr{D} \, f$ and so $[e] = [f]$.
Conversely, suppose that $\leq$ is an order.
Let $e\,  \mathscr{J} \, f$.
Then $[e] \leq [f]$ and $[f] \leq [e]$ and so, by assumption, $[e] = [f]$.
Thus $e \, \mathscr{D} \, f$, as required.

(8) This is immediate from the definition.
\end{proof}


We highlight the fact that when $S$ is orthogonally separating,  $(\mathsf{E}(S),\oplus, \mathbf{0})$ is a commutative monoid, in which case we write $+$ rather than $\oplus$.

Let $S$ be an arbitrary Boolean monoid.
There is no reason for $S$ to be orthogonally separating, but we shall prove that $S$ may be embedded into a Boolean semigroup that is.
Let $S$ be a Boolean monoid and let $m$ and $n$ either be finite non-zero natural numbers or both equal to the first infinite ordinal $\omega$. 
An {\em $m \times n$ generalized rook matrix over $S$} satisfies the following three conditions:
\begin{description}

\item[{\rm (RM1)}] If $a$ and $b$ are in distinct columns and lie in the same row of $A$ then $a^{-1}b = 0$. That is $\mathbf{r}(a) \perp \mathbf{r}(b)$.

\item[{\rm (RM2)}] If $a$ and $b$ are in distinct rows and lie in the same column of $A$ then $ab^{-1} = 0$. That is $\mathbf{d}(a) \perp \mathbf{d}(b)$.

\item[{\rm (RM3)}] In the case that $m$ and $n$ are both infinite we also require that only a finite number of entries in the matrix are non-zero.

\end{description}
We shall usually just say `rook matrix' instead of `generalized rook matrix'.

\begin{remark}\label{rem:quantales}
{\em Rook matrices, though not under this terminology, were first used by Hines \cite{Hines} in his work relating inverse semigroups to  linear logic.
The basic properties of rook matrices were sketched out by the second author during a visit to the University of Ottawa in 2013.
The motivation was to emulate the stabilization of $C^{\ast}$-algebras following some hints in  \cite{Grandis}.
This led to Proposition~\ref{prop:ale} and the inverse semigroups $M_{n}(S)$ and $M_{\omega}(S)$.
However, there was a whiff of adhocism about the definitions.
An attempt to dispell this was made by Wallis in his thesis \cite{Wallis} who developed a coordinate-free module-type theory
which led naturally to rook matrices in a way analogous to that in which linear transformations on a vector space lead to (classical) matrices.
Another approach to showing that the definition is a natural one uses some ideas from \cite{Resende} and \cite{KL}
where we refer the reader for any undefined terms.
We in fact generalize an argument to be found in \cite{Hines}.
Let $Q$ be an inverse quantal frame with top 1 and identity $e$.
We denote the involution by $a \mapsto a^{\ast}$.
Denote by $M_{n}(Q)$ the set of all $n \times n$ matrices over $Q$.
Then this is also an inverse quantal frame. 
The involution is transpose-and-$\ast$.
The multiplicative identity is the $n \times n$ identity matrix (where the identity is $e$).
The top element is the $n \times n$ matrix every element of which is 1.
The projections are the diagonal matrices whose diagonal entries are projections from $Q$.
We may characterize the partial units in $M_{n}(Q)$ as follows:
they are the elements $A$ such that $A^{\ast}A$ and $AA^{\ast}$ are both projections.
It quickly follows that these are precisely the matrices whose entries are partial units
and which satisfy the conditions (RM1) and (RM2).}
\end{remark}

When $n$ is finite, we use the notation $I_{n}$ to mean the $n \times n$ identity matrix.
Let $A$ be an $m \times n$ rook matrix and $B$ an $n \times p$ rook matrix.
The matrix $AB$ is defined as follows:
$$(AB)_{ij} = \bigvee_{k} a_{ik}b_{kj}.$$

\begin{proposition}\label{prop:ale} \mbox{} 
\begin{enumerate}

\item The matrix $AB$ is well-defined and is a rook matrix.

\item Multiplication is associative when defined.

\item The matrices $I_{n}$ are identities when multiplication is defined.

\item Let $A = (a_{ij})$ be a rook matrix.
Define $A^{\ast} = (a_{ji}^{-1})$.
Then $A^{\ast}$ is a rook matrix and $A = AA^{\ast}A$ and $A^{\ast} = A^{\ast}AA^{\ast}$.

\item The idempotents are those square rook matrices $E$ which are diagonal and whose diagonal entries are themselves idempotents.

\item If $A$ and $B$ are two rook matrices of the same size then $A \leq B$ if and only if $a_{ij} \leq b_{ij}$ for all $i$ and $j$.

\item If $A$ and $B$ are again of the same size then $A \perp B$ if and only if $a_{ij} \perp b_{ij}$;
if this is the case then $A \vee B$ exists and its elements are $a_{ij} \vee b_{ij}$.

\end{enumerate}
\end{proposition}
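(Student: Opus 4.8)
The plan is to run through the seven items in order; in every case the strategy is the same, namely to unwind the matrix-level statement into an entrywise statement in $S$ and then compute using only distributivity, the rook axioms (RM1)--(RM3), and the elementary identities of an inverse semigroup --- in particular $a = a\mathbf{d}(a) = \mathbf{r}(a)a$, the implications $a \leq b \Rightarrow \mathbf{r}(a) \leq \mathbf{r}(b)$ and $a \leq b \Rightarrow a = \mathbf{r}(a)b$, and the equivalence $a^{-1}b = 0 \iff \mathbf{r}(a) \perp \mathbf{r}(b)$ together with its dual for $ab^{-1}$.

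For (1) I would first fix $i,j$ and check that $\{a_{ik}b_{kj}\}_k$ is an orthogonal family: for $k \neq l$ the product $(a_{ik}b_{kj})^{-1}(a_{il}b_{lj})$ contains $a_{ik}^{-1}a_{il}$ as an interior factor, which vanishes by (RM1) for $A$, and $(a_{ik}b_{kj})(a_{il}b_{lj})^{-1}$ contains $b_{kj}b_{lj}^{-1}$, which vanishes by (RM2) for $B$; hence $(AB)_{ij}$ is a well-defined join (with only finitely many non-zero terms in the infinite case, by (RM3)). To see that $AB$ again satisfies (RM1), I expand $(AB)_{ij}^{-1}(AB)_{ij'}$ over pairs $(k,l)$: the $k \neq l$ terms vanish by (RM1) for $A$, and each $k = l$ term equals $b_{kj}^{-1}\mathbf{d}(a_{ik})b_{kj'}$, which lies beneath $b_{kj}^{-1}b_{kj'} = 0$ by (RM1) for $B$; (RM2) for $AB$ is dual, and (RM3) is clear. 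Associativity (2) is then immediate from distributivity, since both $((AB)C)_{ij}$ and $(A(BC))_{ij}$ expand to $\bigvee_{k,l} a_{ik}b_{kl}c_{lj}$; and in $AI_n$ and $I_mA$ only the diagonal entry of the identity matrix, which is the monoid identity of $S$, survives, giving (3). For (4), $A^{\ast}$ is a rook matrix because (RM1) for $A^{\ast}$ is exactly (RM2) for $A$ and conversely; the one real computation is $AA^{\ast}A = A$: expanding $(AA^{\ast}A)_{il} = \bigvee_{j,k} a_{ik}a_{jk}^{-1}a_{jl}$, the factor $a_{jk}^{-1}a_{jl}$ forces $k = l$ by (RM1) for $A$, collapsing the sum to $\bigvee_j a_{il}\mathbf{d}(a_{jl})$, whose $j = i$ term is $a_{il}$ and whose remaining terms lie beneath $a_{il}$; then $A^{\ast}AA^{\ast} = A^{\ast}$ follows by applying this to $A^{\ast}$.

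The crux is (5). The ``if'' direction is trivial. For ``only if'', the point I would make is that the rook axioms alone already force $E^{\ast}E$ and $EE^{\ast}$ to be \emph{diagonal}: the off-diagonal entry $(E^{\ast}E)_{ij} = \bigvee_k e_{ki}^{-1}e_{kj}$ vanishes by (RM1) applied to row $k$, and $(EE^{\ast})_{ij} = \bigvee_k e_{ik}e_{jk}^{-1}$ vanishes by (RM2) applied to column $k$. Now push idempotency through: from $E^2 = E$ we get $(E^{\ast}E)E = E^{\ast}E$, and comparing off-diagonal entries gives $\bigl(\bigvee_k \mathbf{d}(e_{ki})\bigr)e_{ij} = 0$ for $i \neq j$, whence $e_{ki}e_{ij} = 0$ for all $k$ whenever $i \neq j$; dually $E(EE^{\ast}) = EE^{\ast}$ yields $e_{ij}e_{jj} = 0$ for $i \neq j$. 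Substituting these into $e_{ij} = (E^2)_{ij} = \bigvee_k e_{ik}e_{kj}$ with $i \neq j$ kills every term, so $e_{ij} = 0$; and $(E^2)_{ii} = e_{ii}^2 = e_{ii}$. I expect this to be the main obstacle: isolating this short argument that delivers ``$E$ diagonal'' without already knowing that $M_n(S)$ is an inverse semigroup, which is precisely the fact that (5) is meant to help establish.

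Finally, (6) and (7). For (6): if $A \leq B$ then $A = FB$ with $F$ an idempotent, hence diagonal with idempotent diagonal by (5), so $a_{ij} = f_{ii}b_{ij} \leq b_{ij}$. Conversely, given $a_{ij} \leq b_{ij}$ for all $i,j$, I take the diagonal matrix $E$ with $E_{ii} = \bigvee_k \mathbf{r}(a_{ik})$, which is an orthogonal join of idempotents by (RM1) for $A$, hence idempotent; then $E_{ii}b_{ij} = \bigvee_k \mathbf{r}(a_{ik})b_{ij}$, the $k \neq j$ terms vanishing since $\mathbf{r}(a_{ik}) \leq \mathbf{r}(b_{ik}) \perp \mathbf{r}(b_{ij})$ by (RM1) for $B$, and the $k = j$ term being $\mathbf{r}(a_{ij})b_{ij} = a_{ij}$, so $EB = A$ and $A \leq B$. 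Part (7) is the same kind of bookkeeping: the conditions $A^{\ast}B = 0$ and $AB^{\ast} = 0$ defining $A \perp B$ are expanded entrywise, the rook axioms being used to discard the cross-terms, leaving the conditions $a_{ij} \perp b_{ij}$ for all $i,j$; and when these hold $A$ and $B$ are compatible, so $A \vee B$ exists in the Boolean semigroup and distributivity identifies its entries as $a_{ij} \vee b_{ij}$.
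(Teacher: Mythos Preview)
Your treatment of part (1) is exactly the paper's: show the family $\{a_{ik}b_{kj}\}_k$ is orthogonal using (RM1) for $A$ and (RM2) for $B$, then verify (RM1) for $AB$ by expanding and killing the $k\neq l$ terms via (RM1) for $A$ and bounding the $k=l$ terms by $b_{kj}^{-1}b_{kj'}=0$; the paper proves only (1) and defers (2)--(7) to Wallis's thesis, so for those there is no in-paper argument to compare against. Your arguments for (2)--(6) are complete and correct, and your handling of (5) --- first observing that $E^{\ast}E$ and $EE^{\ast}$ are automatically diagonal, then feeding $E^2=E$ through $(E^{\ast}E)E=E^{\ast}E$ and its dual to annihilate the off-diagonal entries --- is a clean way to extract diagonality without circularity.

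There is, however, a genuine gap in your sketch of (7). You write that ``the rook axioms [are] used to discard the cross-terms'', but the rook axioms only relate entries of the \emph{same} matrix: there is no axiom that kills $a_{ik}b_{jk}^{-1}$ for $i\neq j$, since $a_{ik}$ lives in $A$ and $b_{jk}$ in $B$. The forward implication $A\perp B\Rightarrow a_{ij}\perp b_{ij}$ is fine (read off the diagonal entries of $A^{\ast}B$ and $AB^{\ast}$), but the converse is simply false as the proposition is phrased. Over $S=\{0,1\}$ take
\[
A=\begin{pmatrix}1&0\\0&0\end{pmatrix},\qquad B=\begin{pmatrix}0&0\\1&0\end{pmatrix}.
\]
Then $a_{ij}b_{ij}=0$ for every $(i,j)$, so entrywise orthogonality holds, yet
\[
AB^{\ast}=\begin{pmatrix}1&0\\0&0\end{pmatrix}\begin{pmatrix}0&1\\0&0\end{pmatrix}=\begin{pmatrix}0&1\\0&0\end{pmatrix}\neq 0,
\]
so $A\not\perp B$ (and indeed the entrywise join fails to be a rook matrix). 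What is actually true, and what is used later in the paper, is the forward implication together with the description of $A\vee B$ \emph{when} $A\perp B$; your argument establishes that much, but you should not claim the equivalence.
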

\begin{proof} We just prove (1). 
The other proofs are straightforward but the details may be found in \cite{Wallis}.
We prove first that for fixed $i$ and $j$, 
the join 
$\bigvee_{k} a_{ik}b_{kj}$
is defined.
Consider the two products $a_{ik}b_{kj}$ and $a_{il}b_{lj}$.
We calculate first $\mathbf{d}(a_{ik}b_{kj})\mathbf{d}(a_{il}b_{lj})$.
This product contains the term $b_{kj}b_{lj}^{-1}$ which is zero by (RM2) applied to $B$.
It follows that $\mathbf{d}(a_{ik}b_{kj})\mathbf{d}(a_{il}b_{lj}) = 0$.
Next we calculate $\mathbf{r}(a_{ik}b_{kj})\mathbf{r}(a_{il}b_{lj})$.
This product contains the term $a_{ik}^{-1}a_{il}$ which is zero by (RM1) applied to $A$.
It follows that $\mathbf{r}(a_{ik}b_{kj})\mathbf{r}(a_{il}b_{lj}) = 0$.
Hence the join is defined.
It remains to prove that $AB$ is a rook matrix.
We shall prove that (RM1) holds; the fact that (RM2) holds will then follow by symmetry. 
Fix $i$ and $j \neq k$.
Put $c_{ij} = \bigvee_{p} a_{ip}b_{pj}$ and $c_{ik} = \bigvee_{q} a_{iq}b_{qk}$.
We calculate $c_{ij}^{-1}c_{ik}$.
This is the join of terms of the form $b_{pj}^{-1}a_{ip}^{-1}a_{iq}b_{qk}$.
There are two kinds of terms.
If $p \neq q$ then $a_{ip}^{-1}a_{iq} = 0$ and the term disappears.
If $p = q$, then the term has the form $b_{pj}^{-1}a_{ip}^{-1}a_{ip}b_{pk} \leq b_{pj}^{-1}b_{pk} = 0$, and so also disappears.
\end{proof}

If $n$ is a finite non-zero natural number, define $M_{n}(S)$ to be the set of all $n \times n$ rook matrices over $S$.
This is a Boolean monoid.
In the case where $n = \omega$, we also write $M_{\omega}(S)$ for the set of all $\omega \times \omega$ rook matrices
over $S$, not forgetting that only a finite number of entries are non-zero.
This is a Boolean {\em semigroup}.

\begin{example}{\em 
Consider the case where $S$ is the inverse monoid $\{0,1\}$.
Then $M_{n}(\{0,1\})$ is the inverse semigroup of $n \times n$ rook matrices in the sense of Solomon \cite{Louis}, 
and so is isomorphic to the symmetric inverse monoid $I_{n}$.} 
\end{example}

\begin{example}{\em Define $I_{fin}(\mathbb{N})$ to be the inverse semigroup of partial bijections of $\mathbb{N}$ with finite domains.
Then $I_{fin}(\mathbb{N})$ is isomorphic to $M_{\omega}(\{0,1\})$.}
\end{example}

Let $a_{1}, \ldots, a_{n}$ be  a list of elements of $S$. 
Define $\Delta (a_{1}, \ldots,a_{n})$ to be the $n \times n$ diagonal matrix whose $n$ diagonal entries are precisely $a_{1}, \ldots, a_{n}$.
When we are working in $M_{\omega}(S)$, we shall use the notation  $\Delta_{\omega} (a_{1}, \ldots,a_{n})$ to be the
$\omega \times \omega$-diagonal matrix whose first $n$ diagonal entries are precisely $a_{1}, \ldots, a_{n}$ and all other entries are zero.
The following is trivial but useful.

\begin{lemma}\label{lem:anger} 
Let $S$ be a Boolean monoid.
Then $S$ is isomorphic to the local monoid $\Delta_{\omega} (1) M_{\omega}(S) \Delta_{\omega} (1)$  of $M_{\omega}(S)$.
\end{lemma}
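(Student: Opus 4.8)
The plan is to exhibit the isomorphism explicitly and then verify, using Proposition~\ref{prop:ale}, that it respects all the Boolean-monoid structure. First I would pin down the corner $\Delta_{\omega}(1) M_{\omega}(S) \Delta_{\omega}(1)$. Note that $\Delta_{\omega}(1)$ is the $\omega\times\omega$ diagonal matrix with the identity of $S$ in position $(1,1)$ and $0$ elsewhere, and by Proposition~\ref{prop:ale}(5) it is idempotent, so the corner is genuinely a local monoid in the sense of the Introduction. Using the product formula, for an arbitrary $\omega\times\omega$ rook matrix $A = (a_{ij})$ one computes that $\Delta_{\omega}(1) A$ agrees with $A$ in the first row and is $0$ in all other rows, and that $A \Delta_{\omega}(1)$ agrees with $A$ in the first column and is $0$ in all other columns. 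Hence $\Delta_{\omega}(1) A \Delta_{\omega}(1)$ is the matrix whose only possibly nonzero entry is $a_{11}$, sitting in position $(1,1)$. Writing $E_{11}(s)$ for the $\omega\times\omega$ rook matrix with $s$ in position $(1,1)$ and $0$ elsewhere (which vacuously satisfies (RM1)--(RM3)), we conclude that $\Delta_{\omega}(1) M_{\omega}(S) \Delta_{\omega}(1) = \{E_{11}(s) : s \in S\}$.

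Next I would check that the map $\varphi \colon S \to \Delta_{\omega}(1) M_{\omega}(S) \Delta_{\omega}(1)$ given by $s \mapsto E_{11}(s)$ is an isomorphism of Boolean monoids. It is a bijection onto the corner, with inverse the extraction of the $(1,1)$-entry. It is a monoid homomorphism: $\varphi(1) = \Delta_{\omega}(1)$, which is the identity of the local monoid, and by the product formula $(E_{11}(s) E_{11}(t))_{11} = \bigvee_k (E_{11}(s))_{1k}(E_{11}(t))_{k1} = st$ while all other entries vanish, so $\varphi(st) = \varphi(s)\varphi(t)$. Clearly $\varphi(0)$ is the zero matrix. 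For joins, Proposition~\ref{prop:ale}(7) shows that $s$ and $t$ are orthogonal in $S$ if and only if $E_{11}(s)$ and $E_{11}(t)$ are orthogonal, and that in that case $E_{11}(s) \vee E_{11}(t) = E_{11}(s \vee t)$; more generally, since the compatibility relation and the natural order on $M_{\omega}(S)$ are entrywise by parts (6) and (7), $\varphi$ preserves all binary compatible joins. Therefore $\varphi$ is a morphism of Boolean monoids with a two-sided inverse, hence an isomorphism.

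Since the statement is essentially bookkeeping, there is no genuine obstacle; the only point needing a moment's care is to confirm that every operation on $M_{\omega}(S)$ relevant to the Boolean-monoid structure --- multiplication, the natural partial order, and orthogonal joins --- restricts to the corner $\Delta_{\omega}(1) M_{\omega}(S) \Delta_{\omega}(1)$ in the entrywise manner guaranteed by Proposition~\ref{prop:ale}, so that the identifications above are legitimate. Everything else is routine.
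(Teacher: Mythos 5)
Your proof is correct and is exactly the routine verification the paper has in mind: the paper states this lemma without proof, calling it ``trivial but useful,'' and your explicit identification of the corner with the matrices supported on the $(1,1)$ entry, together with the entrywise checks via Proposition~\ref{prop:ale}, is the intended argument.
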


In what follows, we shall regard $M_{\omega}(S)$ as being equipped with the distinguished idempotent $\Delta_{\omega} (1)$.

\begin{lemma}\label{lem:butterfly} Let $S$ be a Boolean monoid.
\begin{enumerate}

\item $S$ is equipped with an invariant mean if, and only if, the Boolean inverse semigroup $M_{\omega}(S)$ is equipped with
an invariant mean that normalizes $\Delta_{\omega} (1)$.

\item  $M_{\omega}(S)$ is orthogonally separating.

\end{enumerate}
\end{lemma}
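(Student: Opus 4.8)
The plan is to handle the two parts separately: part~(2) is a short ``shifting'' argument, while the substance of part~(1) is the explicit construction of an invariant mean on $M_{\omega}(S)$ from one on $S$, together with the verification of (IM1). For part~(2), recall by Proposition~\ref{prop:ale}(5) that an idempotent of $M_{\omega}(S)$ is a diagonal rook matrix $E=(e_{ij})$ whose diagonal entries $e_{ii}$ are idempotents of $S$, only finitely many of them non-zero. Given two such idempotents $E$ and $F$, I would keep $F$ fixed and move $E$ off the support of $F$: choose $N$ with $f_{jj}=0$ for all $j>N$ and define the $\omega\times\omega$ matrix $A$ by $a_{N+i,\,i}=e_{ii}$ for every $i\geq 1$, all other entries being $0$. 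Each row and each column of $A$ has at most one non-zero entry, so (RM1)--(RM3) hold trivially, and a direct computation with the multiplication formula gives $A^{\ast}A=E$ and $AA^{\ast}$ equal to the diagonal matrix whose $(N+i)$-th diagonal entry is $e_{ii}$ and whose other entries vanish. Hence $E\ \mathscr{D}\ AA^{\ast}$, and $AA^{\ast}$ is a diagonal idempotent supported in $\{N+1,N+2,\dots\}$, hence with $AA^{\ast}F=0$, so $AA^{\ast}\perp F$; taking $E'=AA^{\ast}$ and $F'=F$ proves that $M_{\omega}(S)$ is orthogonally separating.

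For part~(1), the direction $\Leftarrow$ is short: if $\nu$ is an invariant mean on $M_{\omega}(S)$ with $\nu(\Delta_{\omega}(1))=1$, then by Lemma~\ref{lem:anger} $S$ is isomorphic to the local monoid $\Delta_{\omega}(1)M_{\omega}(S)\Delta_{\omega}(1)$, and by Lemma~\ref{lem:tuesday} (with $e=\Delta_{\omega}(1)$, so $r=1$) the restriction of $\nu$ to this local monoid is an invariant mean normalized at $\Delta_{\omega}(1)$; transporting it along the isomorphism gives an invariant mean on $S$ normalized at $1$.

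For the direction $\Rightarrow$, let $\mu$ be an invariant mean on $S$, normalized at $1$. For an idempotent $E=(e_{ij})\in E(M_{\omega}(S))$ I would define
$$\nu(E)=\sum_{i\geq 1}\mu(e_{ii}),$$
a finite sum of non-negative reals; then $\nu(\Delta_{\omega}(1))=\mu(1)=1$, so $\nu$ is normalized and non-trivial. Condition (IM2) for $\nu$ follows from Proposition~\ref{prop:ale}(7) --- orthogonal idempotent matrices are orthogonal entrywise and their join is formed entrywise --- together with (IM2) for $\mu$ on the diagonal. For (IM1), given $A=(a_{ij})\in M_{\omega}(S)$, the elements $A^{\ast}A$ and $AA^{\ast}$ are diagonal idempotents with $(A^{\ast}A)_{ii}=\bigvee_{k}\mathbf{d}(a_{ki})$ and $(AA^{\ast})_{kk}=\bigvee_{i}\mathbf{r}(a_{ki})$. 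By (RM2) the idempotents $\mathbf{d}(a_{ki})$ with $k$ varying are pairwise orthogonal, so by (IM2) and then (IM1) for $\mu$ one gets $\mu\big((A^{\ast}A)_{ii}\big)=\sum_{k}\mu(\mathbf{d}(a_{ki}))=\sum_{k}\mu(\mathbf{r}(a_{ki}))$, and similarly by (RM1) and (IM2) for $\mu$ one gets $\mu\big((AA^{\ast})_{kk}\big)=\sum_{i}\mu(\mathbf{r}(a_{ki}))$. Summing over the diagonal and interchanging the two finite summations yields
$$\nu(A^{\ast}A)=\sum_{i,k}\mu(\mathbf{r}(a_{ki}))=\nu(AA^{\ast}),$$
which is (IM1); so $\nu$ is an invariant mean on $M_{\omega}(S)$ normalizing $\Delta_{\omega}(1)$.

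I expect the main obstacle to be exactly this (IM1) verification: the point is to see that conditions (RM1) and (RM2) are precisely what turn the diagonal entries of $AA^{\ast}$ and $A^{\ast}A$ into \emph{orthogonal} joins, so that $\mu$ may be applied summand by summand via (IM2), after which the double sum $\sum_{i,k}\mu(\mathbf{r}(a_{ki}))$ is manifestly symmetric under interchanging rows with columns. The remaining points --- well-definedness of the finite joins (here (RM3) is used), the identities $A^{\ast}A=E$ and the description of $AA^{\ast}$ in part~(2), and (IM2) together with the normalization --- are routine.
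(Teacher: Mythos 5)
Your proposal is correct and follows essentially the same route as the paper: the $\Leftarrow$ direction via the local monoid $\Delta_{\omega}(1)M_{\omega}(S)\Delta_{\omega}(1)$, the $\Rightarrow$ direction via $\nu(E)=\sum_i\mu(e_{ii})$ with (IM1) verified by the same double sum $\sum_{i,k}\mu(\mathbf{r}(a_{ki}))$ using (RM1)/(RM2) to make the diagonal entries of $A^{\ast}A$ and $AA^{\ast}$ orthogonal joins, and part (2) by the same ``slide down the diagonal'' matrix. No gaps.
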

\begin{proof} (1) By Lemma~\ref{lem:anger}, 
if $M_{\omega}(S)$ is equipped with an invariant mean that normalizes $\Delta_{\omega} (1)$, then $S$ is equipped with an invariant mean.
We prove the converse.
Suppose that $S$ is equipped with the invariant mean $\mu$.
Define  
$$\nu \colon E(M_{\omega}(S)) \rightarrow [0,\infty)$$ 
by 
$$\nu (\Delta_{\omega} (e_{1}, \ldots, e_{n})) = \sum_{i=1}^{n} \mu (e_{i}).$$
We only have to prove that if $\mathbf{e}$ and $\mathbf{f}$ are two idempotents in $M_{\omega}(S)$ 
such that $\mathbf{e} \, \mathscr{D} \, \mathbf{f}$ then $\nu (\mathbf{e}) = \nu (\mathbf{f})$.
In fact, it is enough to prove the following.
Let $A$ be an $n \times n$ rook matrix over $S$ such that
$$
A^{\ast}A = \Delta (e_{1}, \ldots, e_{n})
\mbox{ and }
AA^{\ast} = \Delta (f_{1}, \ldots, f_{n}).
$$
Then $\nu (\Delta (e_{1}, \ldots, e_{n})) = \nu  (\Delta (f_{1}, \ldots, f_{n}))$
where $\nu$ is defined in the obvious way.
We have that
$$\nu (A^{\ast}A)
=
\sum_{i=1}^{n} \sum_{j=1}^{n} \mu (\mathbf{d}(a_{ji}))$$
whereas
$$\nu (AA^{\ast})
=
\sum_{j=1}^{n} \sum_{i=1}^{n} \mu (\mathbf{r}(a_{ji})).$$
But these two numbers are equal since $\mu (\mathbf{d}(a_{ji})) = \mu (\mathbf{r}(a_{ji}))$.

(2) Let $E = \Delta_{\omega} (e_{1}, \ldots, e_{m})$ and $F = \Delta_{\omega} (0^{r},e_{1}, \ldots, e_{m})$
where $0^{r}$ simply means a sequence of $r$ 0's.
We claim that $E \, \mathscr{D} \, F$.
Let $A$ be the $\omega \times \omega$ matrix which consists of an $r \times m$ zero matrix
sitting on top of the diagonal matrix $E$ and then filled out with 0's.
It can be checked that $A$ has the property that
$A^{\ast}A = E$ and $AA^{\ast} = F$.
It follows that given two idempotents in $M_{\omega}(S)$ we may
`slide' the idempotents down the diagonal of one so that the resulting $\mathscr{D}$-related idempotent
is orthogonal to the other and then their join may be taken.
\end{proof}

\begin{center}
{\bf Proof of Theorem~\ref{thm:type-monoid}}
\end{center}

\noindent
{\bf Definition. }Let $S$ be a Boolean monoid.
Then the {\em type monoid}  $\mathsf{T}(S)$  of $S$ is defined to be the commutative monoid $\mathsf{E} (M_{\omega} (S))$.
Define $\delta \colon E(S) \rightarrow \mathsf{E} (M_{\omega} (S))$ by $\delta (e) = [\Delta_{\omega} (e)]$
and $\mathbf{u} =  \delta (1)$.\\

Proof of part (1).
On the basis of Proposition~\ref{prop:ale} and Lemma~\ref{lem:butterfly}, 
we have that $M_{\omega}(S)$ is an orthogonally separating Boolean inverse semigroup.
Thus by Proposition~\ref{prop:cakes}, $\mathsf{E} (M_{\omega} (S))$ is a commutative monoid
with identity the $\omega \times \omega$-zero matrix.
\begin{itemize}
\item $\delta (0)$ is the identity. This is immediate.
\item  $\delta (1)$ is an order-unit. 
By Proposition~\ref{prop:ale}, 
each idempotent in $M_{\omega}(S)$ is a diagonal matrix whose non-zero diagonal entries are idempotents.
Such a matrix is less than or equal to a matrix of the form $\Delta_{\omega}(1,\ldots,1)$ with, say, $n$ identities.
Such a matrix can be written as an orthogonal join of $n$ idempotents each of which has exactly one identity on the diagonal and zeros everywhere else.
The fact that all of these idempotents are $\mathscr{D}$-related follows from the proof of part (2) of Lemma~\ref{lem:butterfly}.
\item  If $e,f \in E(S)$ are othogonal then $\delta (e \vee f) = \delta (e) + \delta (f)$. The proof of this is straightforward.
\item If $e \, \mathscr{D} \, f$ then $\delta (e) = \delta (f)$. The proof of this is straightforward.
\end{itemize}
Proof of part (2).
This follows from part (1) of Lemma~\ref{lem:butterfly} and the fact that 
$$[\Delta_{\omega}(e_{1}, \ldots, e_{m})] = \delta (e_{1}) + \ldots + \delta (e_{m}).$$
This concludes the proof of Theorem~\ref{thm:type-monoid}.

\begin{remark}
{\em In  what follows, our bare construction of the map $\delta \colon E(S) \rightarrow \mathsf{E} (M_{\omega} (S))$  suffices, 
but it is an interesting question whether this map has universal properties linking partial structures to global structures
since by Proposition~\ref{prop:cakes}, 
the structure $(\mathsf{E}(S),\oplus)$ is a {\em partial} commutative monoid, 
leaving to one side the precise definition, 
whereas  $\mathsf{E} (M_{\omega} (S))$  is a commutative monoid {\em tout court}.
That there is such a universal characterization is hinted at in \cite[p.~142]{Wallis}
but we await the published account for an unambiguous presentation.}
\end{remark}

It now remains to  prove the main theorem of this paper, which requires the following definition.
Let $S$ be a Boolean monoid.
An $m \times (m+1)$ rook matrix $A$ over $S$ such that $A^{\ast}A = I_{m+1}$ is said to be a {\em Tarski matrix of degree $m$ over $S$}.
We shall also refer {\em tout court} to {\em Tarski matrices over $S$}.

\begin{example}\label{ex:question}{\em The existence of a Tarksi matrix of degree 1 is equivalent to the existence of 
a pair of elements $a$ and $b$ such that $\mathbf{d}(a) = 1 = \mathbf{d} (b)$ and $\mathbf{r}(a) \perp \mathbf{r}(b)$.}
\end{example}

The following lemma connects aspects of the structure of $S$ with aspects of the structure of its type monoid.

\begin{lemma}\label{lem:dinky} There exists a Tarski matrix of degree $n$ over $S$ 
if, and only if,
$(n+ 1) \mathbf{u} \leq n \mathbf{u}$ holds in the type monoid.
\end{lemma}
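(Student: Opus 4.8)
The plan is to translate between Tarski matrices over $S$ and inequalities in $\mathsf{T}(S) = \mathsf{E}(M_\omega(S))$, using the fact that in $M_\omega(S)$ the operation $\oplus$ is everywhere defined and that for a rook matrix $A$ the idempotents $A^\ast A$ and $AA^\ast$ are $\mathscr{D}$-related, hence represent the same element of the type monoid. The key identity to keep in mind is $[\Delta_\omega(1,\ldots,1)]$ (with $k$ ones) $= k\mathbf{u}$, which follows from the displayed formula $[\Delta_\omega(e_1,\ldots,e_m)] = \delta(e_1) + \cdots + \delta(e_m)$ established in the proof of Theorem~\ref{thm:type-monoid}, together with $\delta(1) = \mathbf{u}$.

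For the forward direction, suppose $A$ is an $m \times (m+1)$ rook matrix over $S$ with $A^\ast A = I_{m+1}$. First I would pad $A$ with zero rows and columns to view it as an $\omega \times \omega$ rook matrix $\tilde{A}$ in $M_\omega(S)$; then $\tilde{A}^\ast\tilde{A} = \Delta_\omega(1^{m+1})$ while $\tilde{A}\tilde{A}^\ast$ is a diagonal idempotent $E$ supported on $m$ coordinates with diagonal entries $\mathbf{r}$-idempotents summing (as elements of $\mathsf{T}(S)$) to at most $m\mathbf{u}$ — more precisely, $\tilde{A}\tilde{A}^\ast \leq \Delta_\omega(1^m)$ after sliding its support into the first $m$ coordinates (using part (2) of Lemma~\ref{lem:butterfly} to see the sliding is a $\mathscr{D}$-equivalence). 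Since $\tilde{A}^\ast\tilde{A} \,\mathscr{D}\, \tilde{A}\tilde{A}^\ast$, applying $\delta$ and the additivity/$\mathscr{D}$-invariance clauses of Theorem~\ref{thm:type-monoid} gives $(m+1)\mathbf{u} = [\tilde{A}^\ast\tilde{A}] = [\tilde{A}\tilde{A}^\ast] \leq m\mathbf{u}$ in the algebraic preorder, which is the claimed inequality.

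For the converse, suppose $(n+1)\mathbf{u} \leq n\mathbf{u}$ in $\mathsf{T}(S)$. By definition of the algebraic preorder there is $c \in \mathsf{T}(S)$ with $n\mathbf{u} = (n+1)\mathbf{u} + c$, and since $c$ itself is $\leq n\mathbf{u}$ (every element of $\mathsf{T}(S)$ lies below a multiple of the order-unit, and here $c \leq n\mathbf{u}$ directly) it is represented by some diagonal idempotent of $M_\omega(S)$; absorbing it, I would instead argue that there is a diagonal idempotent $F$ with exactly $n$ diagonal ones (i.e. representing $n\mathbf{u}$) and an orthogonal decomposition witnessing $[F] = (n+1)\mathbf{u} + c$, which by part (5) of Proposition~\ref{prop:cakes} and the definition of $\oplus$ unwinds to the existence of an $\omega\times\omega$ rook matrix $B$ with $B^\ast B = \Delta_\omega(1^{n+1})$ and $BB^\ast \leq F \leq \Delta_\omega(1^n)$. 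Truncating $B$ to its nonzero rows and columns — there are at most $n$ occupied rows and exactly $n+1$ occupied columns because $B^\ast B = I_{n+1}$ forces $n+1$ independent columns while the range condition forces at most $n$ rows — yields, after a final re-indexing, an $n \times (n+1)$ rook matrix $A$ over $S$ with $A^\ast A = I_{n+1}$, i.e. a Tarski matrix of degree $n$. The main obstacle is the bookkeeping in this converse direction: extracting from an abstract inequality a rook matrix of exactly the right shape requires being careful that $B^\ast B = I_{n+1}$ genuinely pins down $n+1$ columns (so one does not end up with a $k \times (k+1)$ matrix for some $k < n$), which is where one uses that the diagonal entries of $B^\ast B$ are the full identity $1$ and not merely idempotents, together with the orthogonality conditions (RM1), (RM2) to guarantee distinct columns occupy distinct "slots". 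Once the shape is correct, filling the unoccupied part with zeros and verifying the rook-matrix axioms is routine via Proposition~\ref{prop:ale}.
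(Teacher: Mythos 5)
Your proof is correct and follows essentially the same route as the paper: pad the Tarski matrix into $M_{\omega}(S)$ to get $\Delta_{\omega}(1^{n+1}) \leq_{J} \Delta_{\omega}(1^{n})$, translate via part (5) of Proposition~\ref{prop:cakes} and the identity $[\Delta_{\omega}(1^{m})] = m\mathbf{u}$, and reverse the argument (extracting a witness $B$ with $B^{\ast}B = \Delta_{\omega}(1^{n+1})$, $BB^{\ast} \leq \Delta_{\omega}(1^{n})$ and truncating) for the converse. The paper dismisses the converse with ``reversing the above argument,'' so your bookkeeping there is a welcome, if slightly circuitous, filling-in of that step; the detour through the element $c$ is unnecessary once you invoke part (5) directly.
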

\begin{proof} Suppose that $A$ is a Tarski matrix of degree $n$.
Then $A^{\ast}A = I_{n+1}$ and $AA^{\ast} \leq I_{n}$.
Thus in $M_{\omega}(S)$, we have that 
$$\Delta_{\omega}(\overbrace{1, \ldots, 1}^{n+1}) \leq_{J} \Delta_{\omega}(\overbrace{1, \ldots, 1}^{n})$$
and so by Proposition~\ref{prop:cakes}, we have that
$$[\Delta_{\omega}(\overbrace{1, \ldots, 1}^{n+1})] \leq [\Delta_{\omega}(\overbrace{1, \ldots, 1}^{n})].$$
We now use the fact that
$[\Delta_{\omega}(\overbrace{1, \ldots, 1}^{m})] = m \mathbf{u}$.
The converse is proved essentially be reversing the above argument.
\end{proof}

The following theorem is termed the {\em Tarski alternatives} in the plural to distinguish it from the classical result that is proved later as Theorem~\ref{thm:TA}.

\begin{theorem}[The Tarski alternatives]\label{thm:TAS} Let $S$ be a Boolean monoid.
Then $S$ has an invariant mean if, and only if, there exists no Tarski matrix of any degree over $S$.
\end{theorem}
\begin{proof} By part (2) of Theorem~\ref{thm:type-monoid}, 
the monoid $S$ has an invariant mean if, and only if, the type monoid $T(S)$ admits a  morphism normalized at $\mathbf{u}$. 
By Theorem~\ref{them:wagon}, the type monoid admits a  morphism normalized at $\mathbf{u}$ if, and only if,  for no integer $n$ does the inequality $(n+1) \mathbf{u} \leq n \mathbf{u}$ hold. 
By Lemma~\ref{lem:dinky}, the inequality $(n+1) \mathbf{u} \leq n \mathbf{u}$ holds for some $n$ if, and only if, there exists a Tarski matrix of degree $n$.
\end{proof}

To conclude this section, we shall study a particular class of Boolean monoids that can be said to form the classical theory  of paradoxical decompositions.
An inverse subsemigroup of an inverse semigroup $S$ is said to be {\em wide} if it contains all the idempotents of $S$.
For the remainder of this section, we shall concentrate on those Boolean monoids that are wide inverse submonoids of symmetric inverse monoids $I(X)$.
Since the finite case is trivial from our point of view, we shall assume that $X$ is infinite.
Rook matrices over $I(X)$ have an alternative characterization that we describe below and which generalizes some results to be found in \cite{Hines}.
Let $X_{1}, \ldots, X_{m}$ be sets, not necessarily disjoint.
Define
$$\bigsqcup_{i=1}^{m} X_{i} = \bigcup_{i=1}^{m} X_{i} \times \{i\},$$
their disjoint union.

\begin{lemma}\label{lem:adams} Let  $S \subseteq I(X)$ be a Boolean monoid of partial bijections.
\begin{enumerate}

\item Let $X_{1}, \ldots, X_{n}, Y_{1}, \ldots, Y_{m} \subseteq X$
and let $f \colon \bigsqcup_{j=1}^{n} X_{j} \rightarrow  \bigsqcup_{i=1}^{m} Y_{i}$ be a bijection.
Define the bijection $f_{ij}$ from a subset of $X_{j}$ to a subset of $Y_{i}$ as follows. 
The domain of $f_{ij}$ consists of those elements $x \in X_{i}$ such that $f(x,j) \in Y_{i} \times \{i\}$.
We define $f_{ij}(x)$ to be that element $y \in Y_{j}$ such that $(y,i) = f(x,j)$.
Define $A$ to be the $m \times n$ matrix over $I(X)$ such that $A_{ij} = f_{ij}$.
Then $A$ is a rook matrix.

\item Let $A$ be an $m \times n$ rook matrix with entries $f_{ij}$.
For each $i$, 
define 
$X_{j} = \bigcup_{i=1}^{m} \mbox{\rm dom}(f_{ij})$ 
and for each $j$,
define
 $Y_{i} = \bigcup_{j=1}^{n} \mbox{\rm im}(f_{ij})$.
Define
$$f \colon \bigsqcup_{j=1}^{n} X_{j} \rightarrow  \bigsqcup_{i=1}^{m} Y_{i}$$ 
by $f(x,j) = (f_{ij}(x),i)$ if $(x,j) \in \mbox{\rm dom}(f_{ij})$.
Then $f$ is a bijection.

\item The constructions in parts (1) and (2) above are mutually inverse.

\end{enumerate}
\end{lemma}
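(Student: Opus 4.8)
The plan is to establish the three parts in sequence, treating this as essentially a bookkeeping exercise that translates between bijections on disjoint unions and rook matrices, with the genuine content being the verification that the rook matrix axioms (RM1) and (RM2) correspond exactly to the disjointness built into the disjoint-union construction.

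First, for part (1), I would take the given bijection $f \colon \bigsqcup_{j=1}^n X_j \to \bigsqcup_{i=1}^m Y_i$ and define the partial bijections $f_{ij}$ as stated; each $f_{ij}$ is indeed a partial bijection because $f$ is injective (so $f_{ij}$ is injective) and well-defined on the specified domain. To see that $A = (f_{ij})$ is a rook matrix, I would check (RM1) and (RM2) directly. For (RM1): if $f_{ik}$ and $f_{il}$ lie in the same row $i$ but distinct columns $k \neq l$, then their ranges are the sets of elements $y \in Y_i$ whose preimage under $f$ lies in $X_k \times \{k\}$ and $X_l \times \{l\}$ respectively; since $f$ is a function, no $y$ can have two preimages, so these ranges are disjoint, i.e. $\mathbf{r}(f_{ik}) \perp \mathbf{r}(f_{il})$. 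For (RM2): if $f_{ij}$ and $f_{kj}$ lie in the same column $j$ but distinct rows $i \neq k$, then their domains are the sets of $x \in X_j$ with $f(x,j) \in Y_i \times \{i\}$ and $f(x,j) \in Y_k \times \{k\}$ respectively, and since a point $(x,j)$ has a single image under $f$, these domains are disjoint, so $\mathbf{d}(f_{ij}) \perp \mathbf{d}(f_{kj})$. Condition (RM3) is vacuous here since $m,n$ are finite.

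Next, for part (2), I would start from a rook matrix $A = (f_{ij})$, define $X_j$ and $Y_i$ as the unions of domains down each column and images along each row, and define $f$ on $\bigsqcup_j X_j$ by $f(x,j) = (f_{ij}(x), i)$ whenever $x \in \mathrm{dom}(f_{ij})$. The content is to check this is well-defined (given $x \in X_j$, the row $i$ with $x \in \mathrm{dom}(f_{ij})$ is \emph{unique} by (RM2), so $f(x,j)$ is unambiguous) and surjective onto $\bigsqcup_i Y_i$ (by definition of $Y_i$ every point of $Y_i \times \{i\}$ is hit). Injectivity of $f$ follows because, first, each $f_{ij}$ is injective, and second, if $f(x,j) = (y,i) = f(x',j')$ with $(x,j) \neq (x',j')$: if $j = j'$ then $x \neq x'$ and both map via $f_{ij}$ (same unique row, by well-definedness) contradicting injectivity of $f_{ij}$; if $j \neq j'$ then $y$ lies in $\mathbf{r}(f_{ij}) \cap \mathbf{r}(f_{ij'})$, contradicting (RM1). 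So $f$ is a bijection.

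Finally, for part (3), I would verify that starting from $f$, forming $A$, then re-forming a bijection $f'$ returns $f$, and symmetrically starting from $A$, forming $f$, then re-forming $A'$ returns $A$. For the first: one checks that the sets $X_j$ and $Y_i$ recovered from $A = (f_{ij})$ via the part (2) recipe coincide with the domains/codomains implicit in the original data (here one uses that $X_j = \bigcup_i \mathrm{dom}(f_{ij})$ is exactly the set of $x$ with $(x,j)$ in the domain of $f$, which holds since the original $f$ was defined on all of $\bigsqcup X_j$), and then $f'(x,j) = (f_{ij}(x),i) = f(x,j)$ by construction of $f_{ij}$. For the second: $(A')_{ij}$ is the partial bijection whose domain is $\{x : f(x,j) \in Y_i \times \{i\}\} = \mathrm{dom}(f_{ij})$ and which sends $x$ to the $Y_i$-component of $f(x,j) = (f_{ij}(x),i)$, i.e. $(A')_{ij} = f_{ij}$. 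I expect the main obstacle to be nothing more than notational care — in particular tracking the roles of rows versus columns and of $i$ versus $j$ through the definitions, and being precise that the uniqueness of the row in part (2) is precisely what (RM2) supplies (and dually that disjointness of ranges in a row is what (RM1) supplies); there is no conceptual difficulty.
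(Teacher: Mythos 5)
Your proof is correct and follows essentially the same direct-verification route as the paper, which itself dismisses part (2) as ``a straightforward verification''; your filling in of well-definedness via (RM2), surjectivity from the definition of the $Y_i$, and injectivity via (RM1) is exactly what is needed. One small attribution slip: in your (RM1) argument the disjointness of the ranges within a row follows from the \emph{injectivity} of $f$ (which is precisely your phrase ``no $y$ can have two preimages''), not from $f$ being single-valued --- single-valuedness is what you correctly use for (RM2).
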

\begin{proof} (1)  It is clear from the definition that the maps $f_{ij}$ are partial bijections.
Thus we may form the $m \times n$ matrix $A$ and it remains to show that it is a rook matrix.
Fix a column $j$ and consider the partial bijections $f_{1j}, f_{2j}, \ldots, f_{mj}$.
Then since $f$ is injective these domains must be disjoint.
Fix a row $i$ and consider the partial bijections $f_{i1}, \ldots, f_{in}$. 
Then since $f$ is injective these images must be disjoint.

(2) This is a straightforward verification.

(3) We refer back to the proof of part (1).
Fix a column $j$ and consider the partial bijections $f_{1j}, f_{2j}, \ldots, f_{mj}$.
Then the union of the domains of these functions must be $X_{j}$ since every element in $X_{j} \times \{j\}$ is mapped somewhere by $f$.
Fix a row $i$ and consider the partial bijections $f_{i1}, \ldots, f_{in}$. 
Then the union of the images of the partial bijections $f_{i1}, \ldots, f_{in}$ must be $Y_{i}$ since $f$ is surjective.
\end{proof}

The above result is important because it enables us to convert between questions about rook matrices and questions about bijections between sets.
We now suggest two distinct developments of these ideas.

\begin{remark}
{\em Let $A$ be an $m \times n$ rook matrix of partial bijections such that $A^{\ast}A = I_{n}$ and $AA^{\ast} = I_{m}$.
Such matrices are reminiscent of the dynamical systems studied in \cite{AE, AEK}.}
\end{remark}

\begin{remark}{\em  We resume the connection with quantales mentioned in Remark~\ref{rem:quantales}.
Let $Q$ be an inverse quantal frame \cite{Resende}.
Then it can be proved that the modules $Q^{m}$ and $Q^{n}$ are isomorphic if, and only if, there is an $m \times n$-rook matrix
with entries from $\mathsf{I}(Q)$, the inverse monoid of partial units of $Q$.
In addition, it can be proved that there is a Tarski matrix of degree $m$ over  $\mathsf{I}(Q)$
precisely when there is an injective morphism of $Q$-modules $Q^{m+1} \rightarrow Q^{m}$ 
mapping local sections to local sections, where we use terminology from \cite{RR}.
This all suggests a quantalic analogue of Leavitt path algebras \cite{BK, MT} 
that might provide an appropriate setting for these analogies, which however we shall not pursue here.}
\end{remark}

The following was proved in \cite{Kuratowski} in the case where $E = E'$.
The version given here is a trivial generalization where there is a bijection between $E$ and $E'$.

\begin{theorem}[Kuratowski's theorem]\label{thm:KP}
Let $E$ be a set partitioned thus $\{M,N\}$, where there is a bijection $\phi \colon M \rightarrow N$.
Let $E'$ be a set partitioned thus $\{P,Q\}$, where there is a bijection $\psi \colon P \rightarrow Q$.
Let $\alpha \colon E \rightarrow E'$ be a bijection.
Then there is a bijection from $M$ to $Q$.
In addition, this bijection is a finite join of compositions of the maps $\alpha$, $\phi$ and $\psi$ and partial identities defined on sets.
\end{theorem}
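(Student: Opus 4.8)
The plan is to build a graph out of the three given bijections, analyse its connected components, and read the required bijection off that decomposition; throughout we work in the symmetric inverse monoid on $E\sqcup E'$, inside which $\alpha,\phi,\psi$, their inverses, and all partial identities $1_{A}$ live. Form the graph $\Gamma$ on the vertex set $E\sqcup E'$ whose edges are the \emph{$\alpha$-edges} $\{x,\alpha(x)\}$ $(x\in E)$, the \emph{$\phi$-edges} $\{m,\phi(m)\}$ $(m\in M)$ and the \emph{$\psi$-edges} $\{p,\psi(p)\}$ $(p\in P)$. Each family is a perfect matching on its domain ($E\sqcup E'$, $E$, and $E'$ respectively), and the three families are disjoint because an $\alpha$-edge joins $E$ to $E'$ while a $\phi$-edge lies inside $E$ and a $\psi$-edge inside $E'$. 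Hence every vertex of $\Gamma$ has degree exactly two, so each connected component of $\Gamma$ is a finite cycle or a two-way-infinite path. Along such a component the $\alpha$-edges alternate with the remaining edges, a remaining edge between two $E$-vertices being a $\phi$-edge and one between two $E'$-vertices a $\psi$-edge; thus a component is a cyclic (or bi-infinite) alternating chain of \emph{$E$-blocks} $\{m,\phi(m)\}$ and \emph{$E'$-blocks} $\{p,\psi(p)\}$, consecutive blocks linked by an $\alpha$-edge.

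Each $E$-block contains a unique element of $M$ (its \emph{$M$-vertex}) and a unique element of $N$; each $E'$-block contains a unique element of $Q$ and a unique element of $P$. Choosing an orientation of each component gives, for every $E$-block $\beta$, a well-defined successor $E'$-block $\sigma(\beta)$, and $\sigma$ is a bijection from the $E$-blocks of the component onto its $E'$-blocks (a cyclic shift on a finite cycle, a shift by one on a bi-infinite path). The $\alpha$-edge joining $\beta$ to $\sigma(\beta)$ issues from exactly one of the two vertices of $\beta$, its \emph{forward port}, which is either the $M$-vertex $m$ or the $N$-vertex $\phi(m)$. Now define $\theta\colon M\to Q$ by: for $m\in M$ lying in the $E$-block $\beta$, set $m'=m$ if $m$ is the forward port of $\beta$ and $m'=\phi(m)$ otherwise; then $\alpha(m')$ is a vertex of $\sigma(\beta)$, hence either its $Q$-vertex, in which case $\theta(m)=\alpha(m')$, or its $P$-vertex, in which case $\theta(m)=\psi(\alpha(m'))$.

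On each component $\theta$ realises $\sigma$ at the level of blocks and then selects the $Q$-vertex of the target block, so it is a bijection from $M$ intersected with that component onto $Q$ intersected with it; summing over components, $\theta\colon M\to Q$ is a bijection. Moreover the construction partitions $M$ into at most four sets $M_{ab}$ $(a,b\in\{0,1\})$ — according to whether $\psi$ was applied and whether $\phi$ was applied — on which $\theta$ coincides with $\psi^{a}\circ\alpha\circ\phi^{b}$ (reading $\psi^{0}$ and $\phi^{0}$ as identities). Hence $\theta=\bigvee_{a,b\in\{0,1\}}\psi^{a}\alpha\phi^{b}1_{M_{ab}}$; the four terms have pairwise disjoint domains $M_{ab}$ and, since $\theta$ is injective, pairwise disjoint ranges, so they are pairwise orthogonal and the join exists. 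This exhibits $\theta$ as a finite join of compositions of $\alpha,\phi,\psi$ with partial identities, as required.

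The routine part of the work is the bookkeeping: that $\Gamma$ is genuinely $2$-regular, that the block description of its components (including degenerate two-block components, where the two $\alpha$-edges of a block are parallel) is correct, and that $\theta$ is well defined and injective. The one genuinely delicate point is that an orientation must be chosen in every component — for finite cycles no less than for bi-infinite paths — and one must check that these independent choices assemble into a single globally defined bijection with $\sigma$ a bijection in each case; the finiteness of the join (four pieces) is then automatic. This is in essence Kuratowski's orbit argument. An alternative would be to show that $M$ and $Q$ each embed equidecomposably, using finitely many pieces, into the other and then invoke the Banach--Schr\"oder--Bernstein theorem, but the direct construction above has the advantage of keeping the bound of four pieces explicit.
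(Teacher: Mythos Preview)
The paper does not supply its own proof of this statement; it simply cites Kuratowski's 1924 note and remarks that allowing $E\neq E'$ (with a given bijection $\alpha$) is a trivial generalisation. Your argument is correct and is precisely Kuratowski's orbit argument, as you yourself say: the $2$-regular graph on $E\sqcup E'$, its decomposition into cycles and bi-infinite chains of alternating $E$- and $E'$-blocks, the block-level shift $\sigma$, and the resulting four-piece formula $\theta=\bigvee_{a,b}\psi^{a}\alpha\phi^{b}1_{M_{ab}}$ are exactly the ingredients of the classical proof, so there is nothing further to compare against beyond the reference itself.
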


The following definition simply converts the above theorem into a property.
We say that a Boolean semigroup satisfies the {\em Kuratowski property} 
if given $e_{1} \vee e_{2}$,  an orthogonal join in which $e_{1} \, \mathscr{D} \, e_{2}$, 
and given 
$f_{1} \vee f_{2}$, an orthogonal join in which $f_{1} \, \mathscr{D} \, f_{2}$,
and given that
$(e_{1} \vee e_{2}) \, \mathscr{D} \, (f_{1} \vee f_{2})$
then $e_{1} \, \mathscr{D} \, f_{2}$.

\begin{lemma}\label{lem:dharma} Let $S$ be a Boolean monoid that is a wide inverse submonoid of an infinite symmetric inverse monoid.
\begin{enumerate}

\item In $M_{\omega}(S)$, we have that $\mathscr{D} = \mathscr{J}$.

\item In the type monoid of $S$, the algebraic preorder is an order.

\item The Boolean semigroup $M_{\omega}(S)$ satisfies the Kuratowski property.

\item In the type monoid of $S$, if $2a = 2b$ then $a = b$.

\end{enumerate}
\end{lemma}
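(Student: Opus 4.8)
The plan is to treat the four parts in order, with the substance concentrated in parts (1) and (3).

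For part (1), I would exploit the fact that every idempotent of $M_\omega(S)$ has finite support. Given idempotents $\mathbf e,\mathbf f$ of $M_\omega(S)$ with $\mathbf e\,\mathscr J\,\mathbf f$, both of them — together with any idempotent lying beneath one of them and any rook matrix witnessing a $\mathscr D$-relation between such idempotents — are supported inside the top-left $N\times N$ corner for $N$ large enough, so (identifying that corner with $M_N(S)$) the relation $\mathbf e\,\mathscr J\,\mathbf f$ already holds in the Boolean monoid $M_N(S)$. Now $E(M_N(S))$ is, by Proposition~\ref{prop:ale}(5), the set of diagonal matrices with entries in $E(S)$; since $S$ is wide in $I(X)$ we have $E(S)=\mathcal P(X)$, so $E(M_N(S))\cong\mathcal P(X)^N$ is a complete Boolean algebra and in particular has countable joins. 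Lemma~\ref{lem:mary}(3) therefore applies to $M_N(S)$, giving $\mathbf e\,\mathscr D\,\mathbf f$ in $M_N(S)$, hence in $M_\omega(S)$. Part (2) is then immediate: the type monoid of $S$ is $\mathsf E(M_\omega(S))$ by definition, and its algebraic preorder is precisely the one of Proposition~\ref{prop:cakes}(7), which is an order exactly when $\mathscr D=\mathscr J$ in $M_\omega(S)$ — established in part (1).

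For part (3), I would pass to the language of bijections of sets via Lemma~\ref{lem:adams} (applied after shrinking everything into a common $M_N(S)$, so that the finite version suffices). An idempotent of $M_\omega(S)$ is a diagonal matrix of partial identities and so corresponds to a subset of $X\times\omega$ meeting only finitely many layers, while a rook matrix over $S$ realizing a $\mathscr D$-relation between two idempotents corresponds to a bijection between the corresponding subsets, assembled from the entries of the matrix, i.e.\ from elements of $S$. Under this dictionary the hypotheses of the Kuratowski property for $M_\omega(S)$ — orthogonal joins $\mathbf e_1\vee\mathbf e_2$ with $\mathbf e_1\,\mathscr D\,\mathbf e_2$ and $\mathbf f_1\vee\mathbf f_2$ with $\mathbf f_1\,\mathscr D\,\mathbf f_2$, together with $(\mathbf e_1\vee\mathbf e_2)\,\mathscr D\,(\mathbf f_1\vee\mathbf f_2)$ — become exactly the hypotheses of Theorem~\ref{thm:KP} (the orthogonality of the joins ensuring the disjoint unions are genuine). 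That theorem yields a bijection between the subsets corresponding to $\mathbf e_1$ and $\mathbf f_2$, and, crucially, exhibits it as a finite join of composites of the three given bijections, their inverses, and partial identities. Because $S$ is wide, all the partial identities lie in $S$; because $M_\omega(S)$ is closed under multiplication, involution and finite compatible joins (the pieces of the final bijection being pairwise orthogonal, with disjoint domains inside $\mathbf e_1$ and disjoint ranges inside $\mathbf f_2$), the bijection is realized by a rook matrix over $S$. Translating back, $\mathbf e_1\,\mathscr D\,\mathbf f_2$ in $M_\omega(S)$.

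Finally, for part (4), write $a=[\mathbf e]$ and $b=[\mathbf f]$. Since $M_\omega(S)$ is orthogonally separating (Lemma~\ref{lem:butterfly}(2)) we may present $2a=[\mathbf e_1\vee\mathbf e_2]$ with $\mathbf e_1\perp\mathbf e_2$ and $\mathbf e_1\,\mathscr D\,\mathbf e_2\,\mathscr D\,\mathbf e$, and likewise $2b=[\mathbf f_1\vee\mathbf f_2]$; the assumption $2a=2b$ reads $(\mathbf e_1\vee\mathbf e_2)\,\mathscr D\,(\mathbf f_1\vee\mathbf f_2)$, so part (3) gives $\mathbf e_1\,\mathscr D\,\mathbf f_2$, whence $\mathbf e\,\mathscr D\,\mathbf f$ and $a=b$. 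The one place demanding real care is the dictionary in part (3): one must be sure that the bijection manufactured by Kuratowski's theorem returns inside $M_\omega(S)$ rather than merely inside the ambient symmetric inverse monoid, and this is exactly where wideness of $S$ is used; the reduction to $M_N(S)$ in part (1), and the accompanying bookkeeping of supports, also needs to be handled with a little attention.
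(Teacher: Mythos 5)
Your proposal is correct and follows essentially the same route as the paper: part (1) by reducing to the finite corners $M_N(S)\cong \Delta_\omega(1^N)M_\omega(S)\Delta_\omega(1^N)$, whose idempotent Boolean algebras have countable joins so that Lemma~\ref{lem:mary} applies; part (3) by translating to bijections of disjoint unions via Lemma~\ref{lem:adams} and invoking Kuratowski's theorem, with wideness guaranteeing the resulting bijection is assembled from elements of $S$; and parts (2) and (4) as formal consequences. The only difference is that you spell out the deduction of (4) from (3), which the paper leaves as immediate.
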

\begin{proof} (1) Observe first that $\mathscr{D} = \mathscr{J}$ in $S$ because the Boolean algebra of idempotents of $I(X)$ has countable unions.
It is immediate that the semilattice of idempotents of $M_{n}(S)$ has countable joins and so $\mathscr{D} = \mathscr{J}$ holds in $M_{n}(S)$.
It is not true that the semilattice of idempotents of $M_{\omega}(S)$ has countable joins: 
for example, the set of idempotents $\Delta_{\omega} (1), \Delta_{\omega} (1,1), \Delta_{\omega} (1,1,1), \ldots$ has no join.
Nevertheless, in the inverse semigroup $M_{\omega}(S)$, we have that $\mathscr{D} = \mathscr{J}$ since 
$$M_{\omega}(S) = \bigcup_{i=1}^{\infty} \Delta_{\omega} (1^{i}) M_{\omega}(S)\Delta_{\omega} (1^{i})$$ 
and 
$\Delta_{\omega} (1^{i}) M_{\omega}(S)\Delta_{\omega} (1^{i}) \cong M_{i}(S)$.

(2) This is immediate by part (1) and Proposition~\ref{prop:cakes}.

(3) It is enough to prove that the Kuratowski property holds in $M_{n}(S)$.
Let $\mathbf{e}_{1},\mathbf{e}_{2}, \mathbf{f}_{1},\mathbf{f}_{2}$ be idempotents in $M_{n}(S)$
where 
$\mathbf{e}_{1} \perp \mathbf{e}_{2}$ 
and
$\mathbf{f}_{1} \perp \mathbf{f}_{2}$
and in addition
$\mathbf{e}_{1} \, \mathscr{D} \, \mathbf{e}_{2}$, witnessed by the rook matrix $A$,
$\mathbf{f}_{1} \, \mathscr{D} \, \mathbf{f}_{2}$, witnessed by the rook matrix $B$,
and finally
$\mathbf{e}_{1} \vee \mathbf{f}_{1} \, \mathscr{D} \, \mathbf{e}_{2} \vee \mathbf{f}_{2}$,
witnessed by the rook matrix $C$.
We now re-encode this data using the fact that the elements of our rook matrices are partial bijections and Lemma~\ref{lem:adams}.
Let
$$\mathbf{e}_{1} = \Delta (1_{E_{11}}, \ldots, 1_{E_{1n}}),$$
$$\mathbf{e}_{2} = \Delta (1_{E_{21}}, \ldots, 1_{E_{2n}}),$$
$$\mathbf{f}_{1} = \Delta (1_{F_{11}}, \ldots, 1_{F_{1n}}),$$
$$\mathbf{f}_{2} = \Delta (1_{F_{21}}, \ldots, 1_{F_{2n}}).$$
Using $\cong$ to mean that there is a bijection, we have that
$$E_{11} \sqcup \ldots \sqcup E_{1n} \cong  E_{21} \sqcup \ldots \sqcup E_{2n}$$ 
and
$$F_{11} \sqcup \ldots \sqcup F_{1n} \cong  F_{21} \sqcup \ldots \sqcup F_{2n}$$ 
and
$$W = (E_{11} \cup E_{21})  \sqcup \ldots \sqcup (E_{1n} \cup E_{2n})  \cong  (F_{11} \cup F_{21} ) \sqcup \ldots \sqcup (F_{1n}  \cup F_{2n}) = Z.$$ 
We now use the fact that orthogonal idempotents in $I(X)$ have disjoint domains of definition.
It follows that $E_{11} \cap E_{21} = \emptyset$ etc, and $F_{11} \cap F_{21} = \emptyset$ etc.
Thus $W$ is the disjoint union of
$$E_{11} \sqcup \ldots \sqcup E_{1n} \mbox { and }E_{21} \sqcup \ldots \sqcup E_{2n}$$
and that $Z$ is the disjoint union of
$$F_{11} \sqcup \ldots \sqcup F_{1n} \mbox{ and } F_{21} \sqcup \ldots \sqcup F_{2n}.$$
We therefore have exactly the set-up for an application of Kuratowski's theorem.
We deduce that
$$E_{11} \sqcup \ldots \sqcup E_{1n} \cong  F_{21} \sqcup \ldots \sqcup F_{2n}.$$ 
In addition, the bijection is obtained by taking the finite join of restrictions of the given bijections to sets.
It is here that we make use of the fact that $S$ is a wide inverse submonoid.
Thus $\mathbf{e}_{1} \, \mathscr{D} \, \mathbf{f}_{2}$, as required.

(4) This is immediate by part (3) and the definition of the type monoid.
\end{proof}

The following is the famous result of Tarski \cite[Theorem 16.12]{Tarski} that started our thinking on this subject.
For a more direct proof see \cite{HS}, but here we derive it from our main theorem.

\begin{theorem}[The Tarski alternative]\label{thm:TA} Let $S$ be a Boolean monoid that is a wide inverse submonoid of the symmetric inverse monoid $I(X)$.
Then either $S$ has an invariant mean or $S$ is strongly paradoxical.
\end{theorem}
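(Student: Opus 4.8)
The plan is to route the dichotomy through the type monoid $\mathsf{T}(S)$, using Theorem~\ref{thm:TAS} at one end and the special properties of wide submonoids of $I(X)$ recorded in Lemma~\ref{lem:dharma} at the other. Assume $S$ has no invariant mean. Then by Theorem~\ref{thm:TAS} there is a Tarski matrix of some degree $m\geq 1$ over $S$, so by Lemma~\ref{lem:dinky} the inequality $(m+1)\mathbf{u}\leq m\mathbf{u}$ holds in $\mathsf{T}(S)$. Since $S$ is a wide inverse submonoid of $I(X)$, part (2) of Lemma~\ref{lem:dharma} tells us the algebraic preorder on $\mathsf{T}(S)$ is a genuine partial order; because $m\mathbf{u}\leq(m+1)\mathbf{u}$ always holds, antisymmetry forces $m\mathbf{u}=(m+1)\mathbf{u}$, and hence $n\mathbf{u}=m\mathbf{u}$ for every $n\geq m$. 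In particular $m\mathbf{u}=2m\mathbf{u}$.

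The crux of the argument, and the step I expect to be the main obstacle, is to descend from $m\mathbf{u}=2m\mathbf{u}$ to $2\mathbf{u}\leq\mathbf{u}$, for which I would use the cancellation property in part (4) of Lemma~\ref{lem:dharma}: in $\mathsf{T}(S)$, $2a=2b$ implies $a=b$. The idea is a finite descent on the degree. Suppose $n\mathbf{u}=2n\mathbf{u}$ with $n\geq 2$; put $r=\lceil n/2\rceil$ and $a=r\mathbf{u}$. Then $2a$ equals $n\mathbf{u}$ when $n$ is even, and equals $(n+1)\mathbf{u}=n\mathbf{u}$ when $n$ is odd (using $n\mathbf{u}\leq(n+1)\mathbf{u}\leq 2n\mathbf{u}=n\mathbf{u}$). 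In either case $2a=n\mathbf{u}$, so $4a=2(n\mathbf{u})=n\mathbf{u}=2a$, whence $a=2a$ by Lemma~\ref{lem:dharma}(4); that is, $r\mathbf{u}=2r\mathbf{u}$ with $r<n$. Iterating down to $n=1$ yields $\mathbf{u}=2\mathbf{u}$, in particular $2\mathbf{u}\leq\mathbf{u}$.

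Once $2\mathbf{u}\leq\mathbf{u}$ is established, Lemma~\ref{lem:dinky} applied with $n=1$ produces a Tarski matrix of degree $1$ over $S$, and by Example~\ref{ex:question} this says precisely that $S$ is weakly paradoxical. Finally, because $S$ is wide in $I(X)$ its Boolean algebra of idempotents is all of $\mathcal{P}(X)$, which has countable (indeed arbitrary) joins, so $\mathscr{D}=\mathscr{J}$ in $S$ by part (3) of Lemma~\ref{lem:mary}; Lemma~\ref{lem:arden} then upgrades ``weakly paradoxical'' to ``strongly paradoxical''. Combined with Lemma~\ref{lem:sam}, which shows a weakly paradoxical Boolean monoid admits no invariant mean, this gives the claimed dichotomy: $S$ either has an invariant mean or is strongly paradoxical, and never both. (It is worth noting that the reduction from arbitrary degree $m$ to degree $1$ is exactly where the hypothesis that $S$ is wide in some $I(X)$ is used, via Lemma~\ref{lem:dharma}; for a general Boolean monoid Theorem~\ref{thm:TAS} is the best one can say.)
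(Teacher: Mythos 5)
Your proof is correct and follows essentially the same route as the paper: Theorem~\ref{thm:TAS} and Lemma~\ref{lem:dinky} to get $(m+1)\mathbf{u}\leq m\mathbf{u}$, antisymmetry of the algebraic order via Lemma~\ref{lem:dharma}, the cancellation property $2a=2b\Rightarrow a=b$ to descend to $2\mathbf{u}=\mathbf{u}$, and then Example~\ref{ex:question} together with Lemmas~\ref{lem:mary} and~\ref{lem:arden} to upgrade to strong paradoxicality. The only (cosmetic) difference is that the paper organizes the descent by first choosing a power of two $2^{l-1}\geq n$ and halving $2^{l}\mathbf{u}=2^{l-1}\mathbf{u}$ repeatedly, whereas you descend from $n$ to $\lceil n/2\rceil$ directly; both are the same cancellation argument.
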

\begin{proof} We begin with a couple of observations about the structure of $S$ and its type monoid.
First, since the Boolean algebra of idempotents of $S$ has all countable joins, 
it follows by  Lemma~\ref{lem:mary} and Lemma~\ref{lem:arden}, 
that $S$ is weakly paradoxical if, and only if, it is strongly paradoxical.
Second, by part (1) of Lemma~\ref{lem:dharma}, we have that $\mathscr{D} = \mathscr{J}$ in $M_{\omega}(S)$.
It therefore follows from part (7) of Proposition~\ref{prop:cakes}, that the algebraic preorder in the type monoid of $S$ is in fact an order.
We may now prove the theorem.
The easy direction is immediate: if $S$ has an invariant mean then by Lemma~\ref{lem:sam}, $S$ cannot be weakly paradoxical.
It remains to prove the hard direction.
Suppose that $S$ does not have an invariant mean.
Then by Theorem~\ref{thm:TAS}, it follows that $S$ has a Tarski matrix of degree $n$.
This is equivalent to the inequality holding $(n+1)\mathbf{u} \leq n\mathbf{u}$ in the type monoid of $S$ by Lemma~\ref{lem:dinky}.
Then, in fact, we have that $(n+1)\mathbf{u} = n\mathbf{u}$ since the algebraic order is a preorder.
It follows that $(n + k)\mathbf{u} = n\mathbf{u}$ for all $k = 1,2,3,\ldots$.
Thus for all $N \geq n$ we have that $N\mathbf{u} = n\mathbf{u}$.
Let $2^{l-1} \geq n$.
Then 
$2^{l}\mathbf{u} = 2^{l-1}\mathbf{u}$.
Because of our assumption on $S$,
we may use part (4) of Lemma~\ref{lem:dharma} to deduce that $2\mathbf{u} = \mathbf{u}$.
In particular, this means that there is Tarski matrix of degree 1 over $S$.
By Example~\ref{ex:question}, this is equivalent to $S$ being weakly paradoxical and so strongly paradoxical by our first observation above.
\end{proof}


\end{document}